\newcommand{\NN}{\mathbb{N}}
\renewcommand{\epsilon}{\varepsilon}
\newcommand\numberthis{\addtocounter{equation}{1}\tag{\theequation}}
\newtheorem{theorem}{Theorem}[section]
\newtheorem{lemma}[theorem]{Lemma}
\newtheorem{proposition}[theorem]{Proposition}
\newtheorem{corollary}[theorem]{Corollary}
\newtheorem{definition}[theorem]{Definition}
\newtheorem{claim}[theorem]{Claim}
\newtheorem{fact}[theorem]{Fact}
\newtheorem{question}[theorem]{Question}
\newcommand{\osref}[2]{%
  \setlength\abovedisplayskip{5pt plus 2pt minus 2pt}
  \setlength\abovedisplayshortskip{5pt plus 2pt minus 2pt}
  \ensuremath{\overset{\text{#1}}{#2}}
}
\title{$K_r$-Factors in Graphs with Low Independence Number}
\newcommand*\samethanks[1][\value{footnote}]{\footnotemark[#1]}
\author{ 
Charlotte Knierim%
\thanks{Department of Computer Science, ETH Zurich, Switzerland.
        Email: \{cknierim$\vert$sup\}@inf.ethz.ch} 
\and
Pascal Su%
\samethanks[1]%
\thanks{author was supported by grant no. 200021 169242 of the Swiss National Science Foundation}
}
\date{}
\begin{document}
	\maketitle

\begin{abstract}
A classical result by Hajnal and Szemer\'edi from 1970 determines the minimal degree conditions necessary to guarantee for a graph to contain a $K_r$-factor. Namely, any graph on $n$ vertices, with minimum degree $\delta(G) \ge \left(1-\frac{1}{r}\right) n $ and $r$ dividing $n$ has a $K_r$-factor. This result is tight but the extremal examples are unique in that they all have a large independent set which is the bottleneck. 
Nenadov and Pehova showed that by requiring a sub-linear independence number the minimum degree condition in the Hajnal-Szemer\'edi theorem can be improved.
We show that, with the same minimum degree and sub-linear independence number, we can find a clique-factor with double the clique size. More formally, we show for every $r\in \NN$ and constant $\mu>0$ there is a positive constant $\gamma$ such that every graph $G$ on $n$ vertices with $\delta(G)\ge \left(1-\frac{2}{r}+\mu\right)n$ and $\alpha(G)< \gamma n$ has a $K_r$-factor. We also give examples showing the minimum degree condition is asymptotically best possible.
\end{abstract}


\section{Introduction}

 Given two graphs $H$ and $G$, a collection of vertex-disjoint copies of $H$ in $G$ is called an $H$-tiling. A perfect $H$-tiling of $G$, an $H$-factor for short, is an $H$-tiling that covers all the vertices of $G$. Note that a perfect matching corresponds to a $K_2$-factor, thus the notion of $H$-factors is a natural generalization from edges to arbitrary graphs. In extremal graph theory there is a big interest in finding necessary or sufficient conditions for the existence of spanning substructures. Perfect matchings and Hamilton cycles are two commonly studied examples. In particular for perfect matchings, necessary and sufficient conditions are well-known by Hall's and Tutte's theorems.
 
Often, \emph{global} properties such as factors and Hamilton cycles have \emph{local} necessary conditions. Dirac \cite{dirac1952some} showed that if an $n$-vertex graph $G$ has minimum degree at least $n/2$, then it has a Hamilton cycle, in particular if $n$ is even then $G$ has a perfect matching. This was extended to triangle factors by Corr\'adi and Hajnal \cite{corradi1963maximal} in 1963 and later generalized to $K_r$-factors in a classical result by Hajnal and Szemer\'edi~\cite{hajnal1970proof}, who gave the sufficient minimum degree for $K_r$-factors.

\begin{theorem}[Hajnal and Szemer\'edi] \label{thm:hajnal}
    For every graph on $n$ vertices, given an integer $r\ge 2$, if $r$ divides $n$ and the minimum degree of $G$ is at least $\left(1-\frac{1}{r}\right)n$, then $G$ contains a $K_r$-factor.
\end{theorem}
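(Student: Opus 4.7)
The plan is to prove the theorem by induction on $n$, reducing to a rotation (``swap'') argument on a maximum $K_r$-tiling. The base case $n = r$ is immediate since $\delta(G) \geq r-1$ forces $G = K_r$. For the inductive step I would avoid the naive strategy of pulling off a single $K_r$ whose deletion preserves the degree hypothesis on the residual graph (this turns out to be at least as hard as the full theorem) and instead argue by contradiction from a \emph{global} tiling-maximality condition.

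First I would fix a $K_r$-tiling $\mathcal{T}$ of $G$ of maximum cardinality and let $U = V(G) \setminus \bigcup_{T \in \mathcal{T}} V(T)$. Since $r \mid n$ and every tile covers exactly $r$ vertices, either $U = \emptyset$ (and we are done) or $|U| \geq r$. Assume the latter. For each $u \in U$ and $T \in \mathcal{T}$ record $d_T(u) = |N(u) \cap V(T)|$. The hypothesis $\delta(G) \geq (1-1/r)n$ combined with $|U| \geq r$, via a double count, forces the average of $d_T(u)$ over $T \in \mathcal{T}$ to be at least $r-1$, so the ``bad'' tiles for $u$ (those with $d_T(u) \leq r-2$) are scarce.

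Next I would set up the augmenting step. Select $u_1, \ldots, u_r \in U$ and attempt to extend $\mathcal{T}$ by swapping vertices between tiles so that the $u_j$, together with some displaced vertices or a freshly formed $K_r$, strictly enlarge the tiling. Formally, build an auxiliary digraph on $\mathcal{T}$ whose arcs record admissible swaps: an arc $T \to T'$ indicates that some $v \in V(T)$ has $r-1$ neighbours in $V(T')$, so $v$ can be displaced into $T'$ to form a new $K_r$ with $V(T') \setminus \{v'\}$ for some $v' \in V(T')$, freeing $v'$ in turn. A closed walk anchored by $u_1, \ldots, u_r$ yields a chain of modifications producing a tiling with strictly more covered vertices, contradicting maximality.

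The main obstacle is showing that such an augmenting walk always exists when $U \neq \emptyset$. In near-extremal configurations the non-neighbour patterns of the uncovered vertices can be rigid enough that short walks get blocked at every attempt. The crux of the classical Hajnal--Szemer\'edi proof (and of modern streamlinings such as Kierstead--Kostochka) is showing that any such blockage forces an independent set of size greater than $n/r$ in $G$; but the degree hypothesis implies $\alpha(G) \leq n/r$ directly, since a vertex in an independent set of size $k$ has at least $k-1$ non-neighbours while $\delta(G) \geq (1-1/r)n$ permits at most $n/r-1$ non-neighbours per vertex. I would expect this rigidity/independent-set dichotomy, and the bookkeeping needed to drive the swap argument to a contradiction, to absorb most of the technical effort.
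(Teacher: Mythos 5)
The paper does not prove this theorem at all: Theorem~\ref{thm:hajnal} is the classical Hajnal--Szemer\'edi result, stated as background and cited to \cite{hajnal1970proof}, with a pointer to the shorter Kierstead--Kostochka proof \cite{kierstead2008short}. So there is no in-paper argument to compare against, and your proposal should be judged on its own as an attempted proof of a cited theorem.

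As such, it is an accurate high-level description of the Kierstead--Kostochka strategy (maximum tiling, uncovered set $U$, auxiliary swap digraph, augmenting walks), but it is not a proof, and you say so yourself: the decisive step --- showing that whenever augmentation is blocked at every stage one can extract an independent set of size exceeding $n/r$, contradicting the (correct and elementary) bound $\alpha(G)\le n/r$ --- is exactly the content you defer with ``I would expect this \ldots to absorb most of the technical effort.'' That dichotomy is the theorem; without it the outline is a plan, not an argument. There is also a smaller slip in the averaging claim: for $u\in U$ you have $\sum_{T}d_T(u)\ge \delta(G)-(|U|-1)$, and dividing by the number of tiles $(n-|U|)/r$ gives an average $\ge r-1$ only when $|U|\le r$; for $|U|>r$ the bound as stated can fail, so you would need to use that $U$ is $K_r$-free (or restrict attention to $|U|=r$ after a preliminary reduction) to salvage it. Neither issue is fatal to the strategy, but both would need to be filled in before this constitutes a proof of Hajnal--Szemer\'edi; in the context of the present paper, the correct move is simply to cite \cite{hajnal1970proof} or \cite{kierstead2008short}, as the authors do.
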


A short proof was later found by Kierstead and Kostochka~\cite{kierstead2008short}. 
The divisibility condition in this theorem is necessary as the vertex set must be divisible by $|H|$ if we want to have an $H$-factor. The theorem is also tight in a sense that we can not lower the minimum degree condition and still hope to cover any $n$-vertex graph. 

Different results relating to the theorem of Hajnal and Szemer\'edi have been published. A degree sequence version of the result was published by Treglown~\cite{treglown2016degree} proving that, for a $({1}/{r})$-fraction of the vertices, the degrees can be smaller than prescribed by the Hajnal-Szemer\'edi theorem. Other results include the minimum degree condition in a 3-partite \cite{magyar2002tripartite}, 4-partite \cite{martin2008quadripartite} or multi-partite \cite{keevash2013multipartite} host graph. In each of these results, the known extremal examples all have one or more large independent sets. Naturally the question arises, what happens if we forbid these large independent sets?

 To cover any $n$-vertex graph with independence number\footnote{The independence number of a graph $G$, $\alpha(G)$, is the size of a largest independent set in $G$.} at least $n/r+1$ with cliques of arbitrary size we need at least $n/r+1$ cliques, as no clique can contain more than one vertex from the independent set. Taking an independent set of size exactly $n/r+1$ and adding edges from each of the remaining vertices to all other vertices gives a graph that does not have a $K_r$-factor and minimum degree $n-(n/r+1)=(1-1/r)n-1$.

So-called \emph{Ramsey-Tur\'an} type problems, first studied by Erd\H{o}s and S\'os \cite{erdos1970some} in 1970, ask for the minimum number of edges that force the existence of a given subgraph $H$ in a graph with bounded independence number. In particular $\textbf{RT}(H,o(n))$ denotes the smallest number of edges which guarantees that every graph $G$ on $n$ vertices with
$\alpha(G)=o(n)$ contains a copy of $H$. More on Ramsey-Tur\'an theory can be found e.g.\ in \cite{erdos1979turan,erdHos1983more,simonovits2001ramsey,SUDAKOV200399}.

 Continuing this line of research, Balogh, Molla and Sharifzadeh~\cite{balogh2016triangle} proved that the minimum degree requirement for a triangle factor in $G$ decreases if the independence number of $G$ is small showing that $\delta(G)\ge1/2+\varepsilon$ suffices in this case. Nenadov and Pehova~\cite{nenadov2018ramsey} extended their result to larger cliques and a generalization of the independence number.
 They show that instead of $\delta(G) \ge \left(1-\frac{1}{r}\right) n$ one only needs roughly $\delta(G) \ge \left(1-\frac{1}{r-1}\right) n$ for the existence of a $K_r$-factor if we restrict the independence number of $G$ to be sub-linear.

We further improve the minimum degree condition, doubling the clique size compared to the Hajnal-Szemer\'edi theorem. We see in the following that this is best possible. 
\begin{theorem}
	For every $r \ge 4$ and $\mu>0$ there are constants $\gamma$ and $n_0\in \mathbb{N}$ such that every graph $G$ on $n\ge n_0$ vertices where $r$ divides $n$, with $\delta(G)\ge \left(1-\frac{2}{r}+\mu\right)n$ and $\alpha(G)< \gamma n$ has a $K_r$-factor. 
	\label{thm:mainresult}
\end{theorem}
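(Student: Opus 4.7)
The plan is to follow the absorbing method in its now-standard form, essentially as in Nenadov--Pehova, with all the new difficulty sitting in the almost-covering step. Concretely, I would first construct a small \emph{absorbing set} $A \subseteq V(G)$ with $|A| \le \mu n/2$ and $r \mid |A|$, such that for every $U \subseteq V(G) \setminus A$ with $r \mid |U|$ and $|U| \le \beta n$ (with $\beta = \beta(\mu,r) > 0$ sufficiently small), the induced graph $G[A \cup U]$ contains a $K_r$-factor. Then I would prove an \emph{almost-covering lemma} producing a $K_r$-tiling of $G \setminus A$ that covers all but at most $\beta n$ vertices. The divisibility hypothesis $r \mid n$ forces the uncovered set $U$ to satisfy $r \mid |U|$, and applying the absorbing property of $A$ to $U$ completes a $K_r$-factor of $G$.

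\textbf{Absorbing lemma.} Via the Lo--Markstr\"om framework, the absorbing lemma reduces to showing that every vertex $v \in V(G)$ has $\Omega(n^{2r-1})$ \emph{absorbers}, i.e., $(2r-1)$-vertex sets $S$ for which both $G[S]$ and $G[S \cup \{v\}]$ admit $K_r$-factors --- the canonical example being two copies of $K_r$ sharing an $(r-1)$-clique, one containing $v$ and one containing a ``swap'' vertex $w$. A small random subcollection of absorbers then yields $A$ by a standard concentration argument. To guarantee many absorbers per vertex, I would use Ramsey-Tur\'an supersaturation: the sub-linear independence number $\alpha(G) < \gamma n$ together with the minimum-degree condition implies that $G$ contains $\Omega(n^r)$ copies of $K_r$ and, more crucially, that for each $v$ the neighborhood $N(v)$ still has sub-linear independence number and large edge density, so it contains $\Omega(n^{r-1})$ copies of $K_{r-1}$. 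A straightforward gluing step then produces the required $\Omega(n^{2r-1})$ absorbers for $v$.

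\textbf{Almost-covering --- the main obstacle.} The heart of the argument is the almost-covering lemma. I would apply Szemer\'edi's Regularity Lemma with parameters $\varepsilon \ll d \ll \beta, \mu$ to $G \setminus A$ and pass to the reduced graph $R$ on $k$ clusters of common size $m$. Standard counting gives $\delta(R) \ge (1 - 2/r + \mu/3)k$, and converting a large independent set of $R$ back to $G$ through the low density of irregular or sparse pairs gives $\alpha(R) \le \gamma' k$ for some small $\gamma'$. A Blow-up-Lemma-style argument then reduces the task to producing a fractional $K_r$-tiling of $R$ of total weight $(1-o(1))k/r$, which in turn yields an almost-spanning integer $K_r$-tiling of $G \setminus A$. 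This is where the ``doubling'' phenomenon must be exploited: $\delta(R)/k$ sits below the Hajnal--Szemer\'edi threshold $1 - 1/r$ for $K_r$, but comfortably above the threshold $1 - 1/\lfloor r/2 \rfloor$ for $K_{\lceil r/2 \rceil}$. The plan is thus to first apply Hajnal--Szemer\'edi (or a mild variant for odd $r$) to produce an almost-spanning tiling of $R$ by copies of $K_{\lceil r/2 \rceil}$, then form an auxiliary graph $H$ whose vertices are these smaller cliques and whose edges are pairs of cliques whose union spans a $K_r$ in $R$, and finally find a near-perfect matching in $H$. Each $K_{\lceil r/2 \rceil}$ has a common neighborhood in $R$ of size at least roughly $(\mu/3)k$ which inherits a sub-linear independence number, and Ramsey-Tur\'an supersaturation in this neighborhood produces many $K_{\lfloor r/2 \rfloor}$-copies completing it to a $K_r$, so that $H$ is sufficiently dense for a matching-augmentation (or LP-duality) argument to yield the required near-perfect matching. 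Odd $r$ needs an asymmetric version of this pairing, since the two ``halves'' of $K_r$ then have different sizes; this is the main technical point to nail down carefully.
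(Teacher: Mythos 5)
Your high-level strategy---a small absorbing set combined with an almost-covering lemma---matches the paper, but both ingredients have concrete problems as stated.

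The absorbing step cannot work in the form you propose, for a divisibility reason. You ask for single-vertex absorbers: $(2r-1)$-vertex sets $S$ such that both $G[S]$ and $G[S\cup\{v\}]$ have $K_r$-factors. But $r$ does not divide $2r-1$, so $G[S]$ can never have a $K_r$-factor; and if instead $r\mid|S|$ then $r\nmid |S|+1$. (Your ``canonical example'' of two $K_r$'s sharing a $K_{r-1}$ has the same issue: it is a valid $(S,1)$-absorber for the $r$-set $S$ containing $v$, not a single-vertex absorber.) For $K_r$-factors one must absorb $r$-tuples, not single vertices. The paper accordingly uses the Nenadov--Pehova formulation with $(S,t)$-absorbers for $r$-sets $S$ (Lemma~\ref{lemma:absorbing}); to produce $\phi n$ vertex-disjoint such absorbers for every $S$ it builds $K_r$-diamond paths of bounded length between arbitrary pairs of vertices (Lemma~\ref{lem:absorber_diamondpaths}), which in turn needs the reduced-multigraph and $K_r$-neighborhood machinery of Section~\ref{sec:absorbers}. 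Even if your Ramsey--Tur\'an supersaturation could be redirected to $r$-tuple absorbers, ``straightforward gluing'' understates what is required.

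The almost-covering step has a deeper gap. You claim the regularity reduction yields $\alpha(R)\le\gamma' k$ and then apply Ramsey--Tur\'an supersaturation inside $R$ (and inside common neighborhoods in $R$) to build a dense auxiliary pairing graph $H$. This premise is false: sub-linear independence number in $G$ does not pass to the reduced graph $R$. If $I\subseteq V(R)$ is independent in $R$, the pairs $(V_i,V_j)$, $i,j\in I$, are sparse between clusters, but each cluster $V_i$ can be internally dense, so $\bigcup_{i\in I}V_i$ need not contain a large independent set of $G$ and no contradiction with $\alpha(G)<\gamma n$ arises. The minimum degree of $R$ only forces $\alpha(R)\le (2/r-\mu/3)k$, a constant fraction of $k$ no matter how small $\gamma$ is. Without sub-linear $\alpha(R)$, both your supersaturation-in-$R$ step and the density bound needed for a near-perfect matching in $H$ collapse. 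This is exactly the obstacle the paper is built around: it passes to a reduced \emph{multigraph} (Definition~\ref{def:reducedmultigraph}) distinguishing density $\ge 1/2+\beta$ from density $\ge\beta$, defines multi-embeddings that place an edge or a $2$-path of $K_r$ into a single cluster (using $\alpha(G)<\gamma n$ to find such edges inside each $V_i$), and then constructs an auxiliary graph $\Gamma$ (Lemma~\ref{lem:tilingtransferlemma}) via the Bollob\'as--Erd\H{o}s $K_4$-free graphs (Lemma~\ref{lem:K4free}) so that $\Gamma$ genuinely has sub-linear independence number together with the right minimum degree. The fractional $K_r$-tiling of $\Gamma$ is then found not by a Hajnal--Szemer\'edi half-clique tiling but by an iterative blow-up combined with a greedy $K_r\to K_{r+1}$ extension step (Lemma~\ref{lem:KrKr+1}) that invokes the low independence number of $\Gamma$ directly. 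Finally, your plan has an odd-$r$ problem even on its own terms: for $r=2s+1$, $\delta(R)\approx(1-2/r)k$ is strictly below the Hajnal--Szemer\'edi threshold $\left(1-\tfrac{1}{s+1}\right)k$ for $K_{s+1}=K_{\lceil r/2\rceil}$, so the half-clique tiling of $R$ you want to start from need not exist.
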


Note that the bound is not true for $r=2,3$. Balogh, Molla and Sharifzadeh~\cite{balogh2016triangle} observed that a minimum degree of $(1/2+\varepsilon)n$ is needed in the case $r=3$. This can be seen by considering graphs with a bipartition such that there are no triangles which span over both parts. In particular, for $n$ divisible by 4, the graph $K_{n/2+1} \cup K_{n/2-1}$, the union of two disjoint cliques, has independence number $2$ and minimum degree $n/2-2$ but does not contain a perfect matching nor a triangle factor because $n/2-1$ and $n/2+1$ are both odd and cannot both be divisible by three. Balogh, McDowell, Molla and Mycroft~\cite{balogh2018triangle} showed that the minimum degree condition can be lowered if an additional divisibility condition is added to avoid exactly this case.

The tightness of the Hajnal-Semer\'edi theorem came from large independent sets. So what is the bottleneck if we forbid these? By definition, $\alpha(G)=o(1)$ implies that every set of linear size has at least one edge inside, but if we have a large triangle-free set then we can take at most two vertices from this set for every clique. In particular, if we have an $n$-vertex graph with a triangle-free set of size $2n/r+1$ then we cannot hope to find a $K_r$-factor. The existence of triangle-free graphs with sub-linear independence number is related to the asymmetric Ramsey number $R(3,n)$. This is well studied, results can be found e.g.\ in \cite{erdos1961graph}, \cite{kim1995ramsey}.

The above construction shows that Theorem~\ref{thm:mainresult} is asymptotically tight. Look at the following example of an $n$-vertex graph. Take a triangle-free graph of size $2n/r +1$ and add $\left(1-\frac{2}{r}\right)n -1$ vertices each connected to all other vertices. The triangle-free subgraph of size $2n/r +1$ becomes a bottleneck since we can take at most to two vertices from it to complete to a $K_r$ and we cannot cover the graph with $n/r$ many $K_r$ (see Figure~\ref{fig:extremalexample}). So in this graph we have $\delta(G)> \left(1-\frac{2}{r} \right)n$ and $\alpha(G) = o(1)$ but there is no $K_r$-factor. This construction was first given in~\cite{balogh2016triangle}.

\captionsetup{justification=raggedright,singlelinecheck=false}
\begin{figure}
\centering
\includegraphics[scale=0.9]{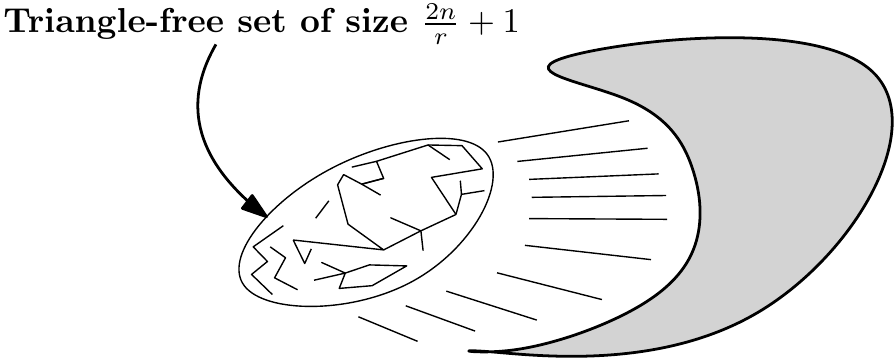} 
\caption{ An extremal example showing that we cannot improve the degree condition below $\left(1-\frac{2}{r}\right)n$.
}
\label{fig:extremalexample}
\end{figure}

\captionsetup{justification=centering}
Our proof combines well-known methods like the Regularity Lemma and embedding techniques with new ideas that use the low independence number.
Our paper is structured as follows. In Section~\ref{sec:preliminaries} we introduce some notation and definitions and give tools that will be useful in the later proofs. The remainder of the paper contains the proof of Theorem~\ref{thm:mainresult}.
The proof consists of two parts. 

First, in Section~\ref{sec:absorbers}, we use the absorbing method. 
This is a technique mainly pushed forward by Rödl, Ruci\'nski and Szemer\'edi~\cite{rodl2006dirac, rodl2006perfect,  rodl2008approximate}. The method implies that, under the appropriate circumstances, it is enough to find a $K_r$-tiling covering everything but a small fraction. The method sets aside a small set of vertices at the beginning which we can cover flexibly enough so that we can ``absorb'' any small fraction of the other vertices which may remain. For the more precise definition see Definition~\ref{def:absorber}.

Second, in Section~\ref{sec:almost}, we prove that the minimum degree and independence number conditions are enough to cover everything but a small $\xi$-fraction of all vertices with a $K_r$-tiling. This is also known as an almost cover of the vertices.
To show that there is an almost cover with $K_r$'s in the graph we find a fractional tiling in the reduced graph after applying the Regularity Lemma and convert this back. 

We adapt some well-known techniques to make use of the fact that the independence number of $G$ is low. Embedding independent sets into a cluster of the reduced graph of the Regularity Lemma is standard, but we sometimes want to embed edges instead of single vertices. In fact we use that the low independence number implies we can find paths of small length in any small linear sized subset of the vertices. We are required to differentiate between edges in the reduced graph which represent densities above $1/2 + \beta$ and those only above $\beta$. We believe this approach might also work for embedding other graphs into a host graph with a low independence number.

\section{Preliminaries}
\label{sec:preliminaries}
Since many of our constructions are specifically built for making use of the low independence number we first introduce some definitions. We also prove an embedding lemma that will be useful in multiple places to convert structures we found in the reduced graph back to the original graph. We begin with the notion of $\epsilon$-regular. Throughout the paper we use standard graph theory notation (see e.g.\ \cite{diestel2012graph}). For a graph $G$, we use $|G|$ to refer to the number of vertices in $G$. In general, variables represented by Latin letters will be variables in $\NN$ and variables represented by Greek letters will be small positive real numbers. $N_G(v)$ denotes the neighborhood of a vertex in $G$ which is the set $N_G(v) = \{ u \in V(G) | \{v, u\} \in E(G) \} $. We omit the index $G$ if the graph is clear from the context. On the contrary $\deg_G(v)$ denotes the number of outgoing edges from $v$ in the graph $G$, counting a double-edge twice in the case of multigraphs. We write $\deg_G(v, S)$ if we restrict to edges to a subset $S$ of $V(G)$ and again we omit the index if not needed. Further for two vertex sets $U$ and $W$, we denote by $\deg_G(U, W)$ the combined degrees over all vertices in $U$, $\sum_{u\in U}\deg_G(u, S)$

\begin{definition}[$\epsilon$-regular]
	Given a graph $G$ and disjoint subsets $V_1, V_2 \subseteq V (G)$, we say that the pair
$(V_1, V_2)$ is $\epsilon$-regular if for all $X\subseteq V_1, |X| \ge \epsilon |V_1|$ and $Y\subseteq V_2, |Y | \ge \epsilon|V_2|$ we have
$|d(X, Y ) - d(V_1, V_2)| \le \epsilon$
where $d(X, Y ) = \deg(X, Y ) / |X||Y|$
\end{definition}
The following fact is an easy consequence from the definition of regularity. It is sometimes known as the Slicing Lemma (cf.\cite{komlos96szemerediregularity}).

\begin{fact}
	Let $B=(V_1\cup V_2,E)$ be an $\varepsilon$-regular bipartite graph, let $\alpha>\varepsilon$ and let $V_1'\subset V_1$ and $V_2'\subset V_2$ be subsets with $|V_1'|\ge \alpha|V_1|$ and $|V_2'|\ge \alpha |V_2|$. Then for $\varepsilon'\ge \max\{\varepsilon/\alpha,2\varepsilon\}$ the graph $B'=B[V_1'\cup V_2']$ induced by $V_1'$ and $V_2'$ is $\varepsilon'$-regular with $|d_B(V_1,V_2)-d_{B'}(V_1',V_2')|<\varepsilon$.
	\label{fact:regsub}
\end{fact}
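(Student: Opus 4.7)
The plan is to verify the two claims of the fact separately, with both following essentially by unpacking the definition of $\varepsilon$-regularity applied to carefully chosen subsets. Throughout, the key idea is that any subset of $V_i'$ of relative size at least $\varepsilon'$ is automatically a subset of $V_i$ of relative size at least $\varepsilon$, so the regularity of the original pair transfers.

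First I would prove the density comparison $|d_B(V_1,V_2) - d_{B'}(V_1',V_2')| < \varepsilon$. Since $|V_1'| \ge \alpha|V_1| > \varepsilon |V_1|$ and similarly $|V_2'| \ge \alpha|V_2| > \varepsilon|V_2|$, the pair $(V_1',V_2')$ satisfies the size threshold needed to invoke $\varepsilon$-regularity of $(V_1,V_2)$, and the definition yields $|d_B(V_1',V_2') - d_B(V_1,V_2)| \le \varepsilon$. Note $d_{B'}(V_1',V_2') = d_B(V_1',V_2')$ since $B'$ is the induced subgraph.

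Next I would verify $\varepsilon'$-regularity of $B'$. Take arbitrary $X \subseteq V_1'$ and $Y \subseteq V_2'$ with $|X| \ge \varepsilon' |V_1'|$ and $|Y| \ge \varepsilon' |V_2'|$. Using $\varepsilon' \ge \varepsilon/\alpha$ and $|V_i'|\ge\alpha|V_i|$, I get
\[
|X| \ge \varepsilon' |V_1'| \ge \tfrac{\varepsilon}{\alpha}\cdot \alpha|V_1| = \varepsilon |V_1|,
\]
and analogously $|Y| \ge \varepsilon|V_2|$. Hence $X$ and $Y$ are admissible test sets for the $\varepsilon$-regularity of $(V_1,V_2)$, giving $|d_B(X,Y) - d_B(V_1,V_2)| \le \varepsilon$. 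Combining with the previous step by the triangle inequality,
\[
|d_{B'}(X,Y) - d_{B'}(V_1',V_2')| \le |d_B(X,Y) - d_B(V_1,V_2)| + |d_B(V_1,V_2) - d_B(V_1',V_2')| \le 2\varepsilon \le \varepsilon',
\]
where the last inequality uses $\varepsilon' \ge 2\varepsilon$. This is exactly the condition for $B'$ to be $\varepsilon'$-regular.

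There is no real obstacle here; the content is entirely in choosing the right two applications of the $\varepsilon$-regularity hypothesis and combining them by the triangle inequality. The two branches of the $\max$ defining $\varepsilon'$ correspond precisely to the two places where a bound is needed: $\varepsilon/\alpha$ to lift subset-size thresholds from $V_i'$ back to $V_i$, and $2\varepsilon$ to absorb the triangle-inequality cost of comparing densities on $B'$ via $B$.
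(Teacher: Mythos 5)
The paper does not actually prove this fact; it states it as a standard consequence of the definition of regularity (the ``Slicing Lemma'') and cites the Koml\'os--Simonovits survey. Your proof is the usual argument for the Slicing Lemma and is correct in all essentials: the density comparison follows from $\alpha>\varepsilon$ so that $V_1',V_2'$ themselves are admissible test sets, and $\varepsilon'$-regularity follows from lifting test sets of relative size $\varepsilon'$ in $V_i'$ to test sets of relative size at least $\varepsilon$ in $V_i$ via $\varepsilon'\ge\varepsilon/\alpha$, then combining with a triangle inequality whose cost is absorbed by $\varepsilon'\ge 2\varepsilon$. The only (cosmetic) mismatch is that the Fact asserts the strict bound $|d_B(V_1,V_2)-d_{B'}(V_1',V_2')|<\varepsilon$, while the paper's definition of $\varepsilon$-regularity uses $\le\varepsilon$, so your argument only yields the non-strict version; this is an imprecision in the paper's statement, not a gap in your proof.
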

Our proof builds upon the famous Regularity Lemma by Szemer\'edi. Originally from~\cite{szemeredi1975regular} there have been many variants making it slightly stronger or adapted to a particular problem. The following is the degree variant of the Regularity Lemma.

\begin{lemma}[Regularity Lemma \cite{komlos96szemerediregularity}, 
Theorem 1.10]
	For every $\epsilon>0$ there is an $M=M(\epsilon)$ such that if $G$ is a graph on $n\ge M$ vertices and $\beta \in[0,1]$ is a real number, then there exists a partition $V(G)=V_0\cup\ldots\cup V_k$ and a spanning subgraph $G'\subseteq G$ with the following properties:
	\begin{enumerate}
		\item \label{lem:reg:k} $k\le M$,
		\item \label{lem:reg:V0} $|V_0|\le \epsilon n$,
		\item \label{lem:reg:Vi} $|V_i|=m$ for all $1\le i\le k$ with $m\le \epsilon n$,
		\item \label{lem:reg:deg}$\deg_{G'}(v)>\deg_G(v)-(\beta+\epsilon)n$ for all $v\in V(G)$,
		\item \label{lem:reg:indsets} $V_i$ is an independent set in $G'$ for all $i\in [k]$,
		\item \label{lem:reg:pairs} all pairs $(V_i,V_j)$ are $\epsilon$-regular with density $0$ or at least $\beta$. 
	\end{enumerate}
	\label{lem:reg}
\end{lemma}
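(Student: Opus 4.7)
The statement is the degree form of Szemer\'edi's Regularity Lemma; I would derive it from the classical Szemer\'edi Regularity Lemma via a standard cleanup. First, apply the classical lemma to $G$ with a sufficiently small parameter $\epsilon_1\ll \epsilon$, producing a partition $V(G)=V_0'\cup V_1\cup\ldots\cup V_k$ with $|V_0'|\le \epsilon_1 n$, equal-size clusters $V_i$, and at most $\epsilon_1 k^2$ non-$\epsilon_1$-regular pairs $(V_i,V_j)$.

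Next, I would clean up: declare a cluster $V_i$ \emph{bad} if more than $\sqrt{\epsilon_1}\cdot k$ of its pairs are irregular. By double-counting of irregular pairs there are at most $2\sqrt{\epsilon_1}\cdot k$ such clusters, so absorbing their vertices into $V_0$ yields $|V_0|\le(\epsilon_1+2\sqrt{\epsilon_1})n\le \epsilon n$, giving (2). Then I define $G'\subseteq G$ by keeping every edge incident to $V_0$ together with every edge of a pair $(V_i,V_j)$ (for $i,j\ge 1$) which is $\epsilon_1$-regular of density $\ge\beta$ in $G$, and deleting all remaining edges (intra-$V_i$ edges, edges in irregular pairs, and edges in low-density pairs). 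Conditions (1), (3), (5) and (6) are immediate from this construction, the last one since every surviving pair has exactly the same edge set it had in $G$.

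The main work is (4). For $v\in V_0$ no edge at $v$ is deleted, so (4) holds trivially. For $v\in V_i$ with $i\ge 1$ the deleted edges decompose into three groups: (a) intra-$V_i$ edges, contributing at most $m\le \epsilon_1 n$; (b) edges to irregular partner clusters, contributing at most $\sqrt{\epsilon_1}\cdot k\cdot m=\sqrt{\epsilon_1}\,n$ because $V_i$ is not bad; and (c) edges to low-density partners, i.e.\ $\sum_{j\in L(i)}\deg(v,V_j)$ where $L(i)$ lists the low-density partners of $V_i$. For (c), the $\epsilon_1$-regularity of each pair $(V_i,V_j)$ of density $d_j<\beta$ implies that all but at most $\epsilon_1 m$ vertices of $V_i$ satisfy $\deg(v,V_j)\le (d_j+\epsilon_1)m<(\beta+\epsilon_1)m$, so such ``typical'' vertices contribute at most $(\beta+\epsilon_1)m\cdot|L(i)|\le (\beta+\epsilon_1)n$ in total, yielding a combined loss below $(\beta+\epsilon)n$ as required.

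The main obstacle is handling the remaining ``atypical'' vertices uniformly across all clusters. Naively they could amount to $\epsilon_1 m\cdot|L(i)|$ per cluster and hence too many in total to absorb into $V_0$ while keeping $|V_0|\le \epsilon n$. This is addressed by a careful choice of $\epsilon_1$ relative to $\epsilon$, $\beta$ and the bound $k\le M(\epsilon_1)$, together with the observation that moving a small fraction of vertices out of a cluster into $V_0$ preserves the regularity of the surviving pairs by the Slicing Lemma (Fact~\ref{fact:regsub}); this is the bookkeeping step I would have to carry out most carefully.
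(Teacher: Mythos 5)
The paper does not prove this lemma at all: it is taken verbatim from Koml\'os--Simonovits (Theorem~1.10 there) as a black box, so there is no ``paper proof'' to compare against. Your derivation from the classical Regularity Lemma is the standard route and is essentially sound in outline: the bad-cluster cleanup, the definition of $G'$, and the typical-vertex bound on low-density losses are all correct.

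The one place you rightly flag as delicate is the bookkeeping for atypical vertices, and your description of how to close it is slightly off: you suggest choosing $\epsilon_1$ ``relative to \dots the bound $k\le M(\epsilon_1)$,'' which would be circular, since $M$ depends on $\epsilon_1$. The resolution does not need $M$ at all. Call $v\in V_i$ \emph{very bad} if it is atypical (degree $\ge(d_j+\epsilon_1)m$) for more than $\epsilon k/2$ of the low-density partners $V_j$ of $V_i$. Double counting the at most $\epsilon_1 m\,|L(i)|\le\epsilon_1 mk$ atypical (vertex, pair) incidences shows that at most $2\epsilon_1 m/\epsilon$ vertices of $V_i$ are very bad --- a bound independent of $k$, hence non-circular. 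Taking $\epsilon_1\le\epsilon^2/8$ makes this at most $\epsilon m/4$ per cluster; remove $\lceil\epsilon m/4\rceil$ vertices from every cluster (the very bad ones plus arbitrary padding so the clusters stay equal) into $V_0$, and Fact~\ref{fact:regsub} preserves $\epsilon$-regularity of the surviving pairs. For a remaining vertex the loss from atypical low-density pairs is then at most $(\epsilon k/2)\cdot m\le\epsilon n/2$, and combined with the intra-cluster, irregular-pair, and typical low-density contributions this stays strictly below $(\beta+\epsilon)n$ once $\epsilon_1$ is small enough as a function of $\epsilon$ alone. With that adjustment your sketch becomes a complete proof.
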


What is new in our case is that we must differentiate between dense and very dense pairs of partitions. The following definition replaces the usual reduced graph of the Regularity Lemma. We call it the reduced multigraph throughout the paper.

\begin{definition}[reduced multigraph] \label{def:reducedmultigraph}
	For a graph $G$ and $\beta,\epsilon>0$ let $V(G)=V_0\cup\ldots\cup V_k$ be a partition and $G'\subseteq G$ and a subgraph fulfilling the properties of Lemma~\ref{lem:reg}. We denote by $R_{\beta, \epsilon}$ the \emph{reduced multigraph} of this partition, which is defined as follows. Let $V(R_{\beta, \epsilon})=\{1,\ldots,k\}$ and for two distinct vertices $i$ and $j$ we draw two edges between $i$ and $j$ if $d_{G'}(V_i,V_j)\ge 1/2+\beta$, one edge if $d_{G'}(V_i,V_j)\ge \beta$ and no edge otherwise. 
\end{definition}

In this reduced multigraph we sometimes refer to the vertices as clusters because of the correspondence to sets of vertices in the original graph.
We omit the subscripts $\beta$ and $\epsilon$ whenever it is clear from the context or the parameters are not used.

The following fact connects a minimum degree condition in $G$ to a minimum degree condition in reduced multigraph.
\begin{fact}\label{fact:min_deg_r}
Let $G$ be a graph with $\delta(G)\ge \left(1-\frac{2}{r}+\mu\right)n$ and $ V_1\cup\ldots\cup V_k$ be the partition given by the Regularity Lemma with the corresponding reduced multigraph $R_{ \beta, \epsilon}$ for $\epsilon$ and $\beta$ smaller than $\mu/10$. Then for every $i\in V(R_{\beta, \epsilon})$ we have
\[\deg_{R_{\beta, \epsilon}}(i)\ge  2\left(1-\frac{2}{r}+\mu/2\right)k.\]
\end{fact}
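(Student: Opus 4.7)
The plan is to pass from the vertex-degree bound in $G$ to a density bound on pairs of clusters, and then translate that density bound into the multigraph degree via the thresholds $\beta$ and $1/2+\beta$.

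First I would fix a cluster $V_i$ and use property~\ref{lem:reg:deg} of Lemma~\ref{lem:reg} together with $\delta(G)\ge (1-2/r+\mu)n$ to conclude $\deg_{G'}(v)\ge (1-2/r+\mu-\beta-\epsilon)n$ for every $v\in V_i$. Summing over the $m$ vertices in $V_i$ and subtracting the (trivial) contribution from $V_i$ to itself (zero, by property~\ref{lem:reg:indsets}) and the contribution to $V_0$ (at most $m|V_0|\le m\epsilon n$), I obtain
\[
\sum_{j\ne i}\deg_{G'}(V_i,V_j)\ \ge\ mn\bigl(1-\tfrac{2}{r}+\mu-\beta-2\epsilon\bigr).
\]
Dividing by $m^2$ and using that $n/m\ge k$ (since $km\le n$) converts this to a bound on the sum of densities:
\[
\sum_{j\ne i} d_{G'}(V_i,V_j)\ \ge\ k\bigl(1-\tfrac{2}{r}+\mu-\beta-2\epsilon\bigr).
\]

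Next I would partition the other clusters into three types according to Definition~\ref{def:reducedmultigraph}: let $e_2$ be the number of $j$ with $d_{G'}(V_i,V_j)\ge 1/2+\beta$ (contributing $2$ to the multigraph degree), $e_1$ the number with $\beta\le d_{G'}(V_i,V_j)<1/2+\beta$ (contributing $1$), and the rest with density $0$ by property~\ref{lem:reg:pairs}. Bounding the density in each class trivially by $1$ and $1/2+\beta$ respectively yields
\[
k\bigl(1-\tfrac{2}{r}+\mu-\beta-2\epsilon\bigr)\ \le\ e_2+\bigl(\tfrac12+\beta\bigr)e_1.
\]
Multiplying by $2$ and rearranging gives $2e_2+e_1\ge 2k\bigl(1-\tfrac2r+\mu-\beta-2\epsilon\bigr)-2\beta e_1\ge 2k\bigl(1-\tfrac2r+\mu-2\beta-2\epsilon\bigr)$, using the trivial bound $e_1\le k$.

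Finally, since $\beta,\epsilon<\mu/10$, we have $\mu-2\beta-2\epsilon>\mu/2$, so $\deg_{R_{\beta,\epsilon}}(i)=2e_2+e_1\ge 2k(1-2/r+\mu/2)$, as required. There is no real obstacle here — the only mildly delicate point is keeping track of the fact that a single edge in the multigraph corresponds to densities in the range $[\beta,1/2+\beta)$ (so its density is bounded by $1/2+\beta$ rather than by $1$), which is exactly what produces the factor of $2$ and explains why the conclusion doubles the Hajnal–Szemerédi-type constant.
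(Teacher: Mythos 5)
Your proof is correct and follows essentially the same approach as the paper: sum the $G'$-degrees of vertices in $V_i$ to get a lower bound on the edges to other clusters, then translate to multigraph degree using the fact that a double-edge corresponds to density at least $1/2+\beta$. The paper packages the last step as ``every multi-edge in $R$ represents at most $(1/2+\beta)m^2$ edges in $G'$'' and divides, while you keep explicit counts $e_1,e_2$ of single and double edges and then use $e_1\le k$ to clean up the $\beta$-error; these are just two phrasings of the same accounting.
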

\begin{proof}
    For every $i\in V(R_{\beta, \epsilon})$ we have $\deg_{G'}(V_i, \bigcup_{j\ne i} V_j)$ is at least \[|V_i|\left(\left(1-\frac{2}{r}+\mu -(\epsilon+\beta) \right) n -|V_0| \right) \ge \left(1-\frac{2}{r}+\mu -2\epsilon -\beta) \right) nm .\]
    Every edge in $R_{\beta, \epsilon}$ represents less than $(1/2 + \beta) m^2$ edges in $G' \setminus {V_0}$. So $R_{\beta, \epsilon}$ must have minimum degree at least
\[\deg_{R_{\beta, \epsilon}}(i)\ge \frac{ \left(1-\frac{2}{r}+\mu -2\epsilon - \beta\right) nm}{(1/2+ \beta )m^2}\ge 2\left(1-\frac{2}{r}+\mu/2\right)k,\]
where in the last step we use the upper bounds on $\beta$ and $\epsilon$, $m\le n/k$ and $(1/2+\beta)^{-1}\ge 2(1-2\beta)$.\end{proof}

Then to formalize the intuition of embedding two vertices into a cluster of the reduced graph we define a multi-embedding.

\begin{definition}[$H$-multi-embedding]\label{def:embedding}

	Let $R$ be a reduced multigraph. We say that a simple graph $H$ is embeddable into the multigraph $R$ if there is a mapping $f:V(H)\to V(R)$ such that the following holds:
	\begin{enumerate}
		\item \label{def:emb:1} For any $i\in V(R)$ the induced subgraph on the vertices $f^{-1}(i)$ in $H$ is either an isolated vertex, an edge or a path of length 2. 
		\item \label{def:emb:2}  If $\{u,v\}\in E(H)$, then $f(u)$ and $f(v)$ are connected by at least one edge in $R$ (as long as $f(u)$ and $f(v)$ differ).
		\item \label{def:emb:3}  If for $i,j\in V(R)$ we have that $f^{-1}(i)$ and $f^{-1}(j)$ have at least two vertices and are connected in $H$, then $i$ and $j$ are connected with two edges. 
		
		\item \label{def:emb:4}  The joint neighborhood of the vertices embedded into a single cluster has at most two vertices embedded in any other cluster. That is
		\[ | N_{H}(f^{-1}(i))   \cap f^{-1}(j)   |  \le 2 \qquad \forall  i, j \in V(R) \  (i \ne j) \]
		where $N_{H}(f^{-1}(i)) = \bigcup_{w \in f^{-1}(i)} N_{H}(w)$ is the combined neighborhood of all vertices embedded in cluster $i$.
	\end{enumerate} 
	
	We call $f$ a multi-embedding of $H$.
\end{definition}

\begin{figure}[b]

\centering
\includegraphics[scale=0.6]{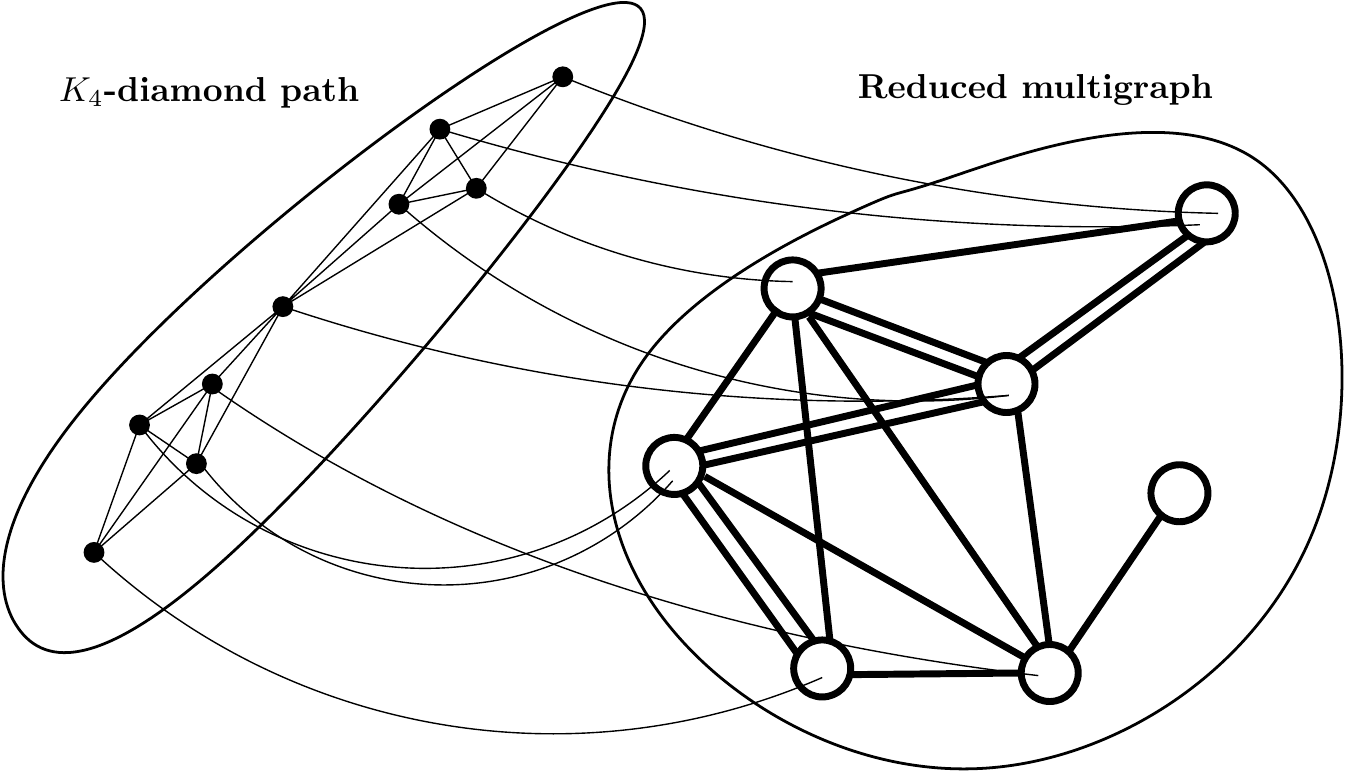}\\ 

\label{fig:embedding}
\caption{Multi-embedding of a $K_{4}$-diamond path}
\end{figure}

We prove that this embedding is useful in the intended way. Whenever we can find a multi-embedding of a graph $H$ in a reduced multigraph of $G$ then we can also find many copies of $H$ as a subgraph of $G$.
\begin{lemma}[Embedding Lemma]\label{lem:embedable_struct}
	For every graph $H$ with $|H| = h $ and $\beta >0$ there exist $\epsilon,  \gamma> 0$ and $n_0 \in \NN$ such that the following holds for every graph $G$ on $n > n_0$ vertices and with independence number $\alpha(G) \le \gamma n$ and the sets $ V_1\cup\ldots\cup V_k$ with $|V_i|=m$ given by the Regularity Lemma with the corresponding reduced multigraph $R_{ \beta, \epsilon}$.
	Let $f$ be a multi-embedding of $H$ into $R_{ \beta, \epsilon}$ with $f(V(H))=\mathcal{I}=\{i_1,\ldots,i_t\}$ for some $1\le t\le |H|$. Then let $V'_{i_1},\ldots,V'_{i_t}$ be subsets of $V_{i_1},\ldots,V_{i_t}$ respectively of size at least $({2 }/ {\beta})^{h} \epsilon m$. There exists a copy of $H$ as a subgraph of $G$ such that $v \in V'_{f(v)}$ for each vertex $v$ in $H \subset G$.
\end{lemma}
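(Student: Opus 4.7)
The plan is to embed the vertices of $H$ into $G$ greedily, processing them in an order that treats each preimage $f^{-1}(i)$ as a contiguous block. For every unembedded vertex $x \in V(H)$ I maintain a candidate set $C_x \subseteq V'_{f(x)}$, initialized to all of $V'_{f(x)}$. When it is time to handle a cluster $i \in \mathcal{I}$ with $f^{-1}(i) = \{u_1, \ldots, u_\ell\}$, where $\ell \in \{1,2,3\}$ by condition~(1) of Definition~\ref{def:embedding}, I work with the common candidate set $D = C_{u_1} \cap \cdots \cap C_{u_\ell}$. Each previously embedded cross-cluster neighbor of some $u_j$ shrinks $D$ by a multiplicative factor of at least roughly $\beta/2$ via $\epsilon$-regularity (with condition~(3) ensuring density above $1/2+\beta$ when multiple $u_j$'s simultaneously constrain the same regular pair). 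Since the total number of such constraints summed over $u_1,\ldots,u_\ell$ is at most $h$, I obtain $|D| \geq (\beta/2)^h |V'_i| \geq \epsilon m$.

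The novel step is realizing the images of $u_1, \ldots, u_\ell$ inside the single cluster $V_i$, where the reduced subgraph $G'$ has no edges. For $\ell = 1$ any typical vertex of $D$ works. For $\ell = 2$ the set $f^{-1}(i)$ is an edge of $H$, so I need $\phi(u_1)\phi(u_2) \in E(G)$; but $|D| > \gamma n \geq \alpha(G)$, so $G[D]$ contains an edge whose endpoints become the images. For $\ell = 3$ the set $f^{-1}(i)$ is a $P_3$ in $H$, so I need the images to form a $P_3$ in $G$; here I use the elementary observation that every $P_3$-free graph is a disjoint union of edges and isolated vertices and therefore has independence number at least half its order, so $|D| > 2\gamma n$ forces a $P_3$ in $G[D]$. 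In each case I additionally insist the chosen images avoid the atypical vertices of every regular pair $(V_i, V_j)$ whose $V_j$ still contains an unembedded neighbor of some $u_a$; since each atypical set contains only $\epsilon m$ vertices, condition~(4) bounding the number of such $j$'s makes this a lower-order deletion. Finally I update $C_x \leftarrow C_x \cap N_G(\phi(u_j))$ for every unembedded neighbor $x$ of some $u_j$ in $H$.

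The main obstacle is the quantitative bookkeeping. I need the hierarchy $\gamma \ll 1/M(\epsilon) \ll \epsilon \ll \beta$ so that $\epsilon m \geq \epsilon(1-\epsilon)n/M(\epsilon)$ dominates $2\gamma n$ and so that after up to $h$ sequential regularity shrinkages plus the exclusion of atypical vertices, the surviving set $D$ is still large enough for the independence-number argument to produce the required edge or $P_3$. Once the constants are chosen to respect this hierarchy, the greedy procedure never gets stuck and outputs a map $\phi$ that embeds $H$ as a subgraph of $G$ with each $\phi(v) \in V'_{f(v)}$, as required.
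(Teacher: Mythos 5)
Your overall strategy — a greedy, cluster-by-cluster embedding with candidate sets shrunk by regularity, using the low independence number to realize edges and $P_3$'s inside a single cluster — matches the paper's inductive proof in spirit. However, there is a genuine gap in your shrinkage bookkeeping that comes from not specifying the processing \emph{order} of the clusters.

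The problematic scenario is this: suppose $f^{-1}(i)=\{v\}$ is a singleton, $f^{-1}(j)=\{w_1,w_2\}$, and both $w_1,w_2$ are neighbours of $v$ in $H$ (this is allowed: condition~(4) permits $|N_H(f^{-1}(i))\cap f^{-1}(j)|=2$, and condition~(3) does \emph{not} force a double edge between $i$ and $j$ because $f^{-1}(i)$ has only one vertex, so the pair $(V_i,V_j)$ may have density as low as $\beta$). If you process cluster $j$ before cluster $i$, you fix $\phi(w_1),\phi(w_2)$ first and then update $C_v\leftarrow V'_i\cap N_G(\phi(w_1))\cap N_G(\phi(w_2))$. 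Each of $N_G(\phi(w_1))$ and $N_G(\phi(w_2))$ has density roughly $\beta$ in $V'_i$, but regularity gives no lower bound on their \emph{intersection}; it can be empty. Your parenthetical appeal to condition~(3) does not cover this case, precisely because one of the two preimages is a singleton. Nor can you fix it by choosing $\phi(w_2)$ typical with respect to $N_G(\phi(w_1))\cap C_v$, since that set has size only about $\beta\,\epsilon m<\epsilon m$ and regularity does not apply at that scale; and you pick the pair $\{\phi(w_1),\phi(w_2)\}$ jointly as an edge of $G[D]$ anyway, so you cannot condition the second choice on the first.

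The paper's proof resolves exactly this by always processing singleton clusters first: when $f^{-1}(i)=\{v\}$ is handled, $\phi(v)=s_v$ is chosen typical and every neighbouring $V'_j$ is replaced by $V''_j\subseteq N_G(s_v)$ \emph{before} anything in $j$ is embedded, so any later choice of $\phi(w_1),\phi(w_2)\in V''_j$ is automatically adjacent to $s_v$ and no intersection over a single-edge pair is ever formed. After all singleton clusters are gone, every remaining pair of clusters that constrains each other is forced to be a double edge by condition~(3), and two constraints at density $\ge 1/2+\beta$ intersect in density $\ge 2\beta-2\epsilon$, giving the uniform per-step factor of $\beta/2$ that the lemma's threshold $(2/\beta)^h\epsilon m$ is calibrated for. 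Adding this ordering rule to your greedy scheme (and re-justifying the per-step $\beta/2$ factor along these lines) would close the gap; as written, the argument can get stuck.
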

\begin{proof} 
	Ensure $\beta^{h}\ge (h+1)\varepsilon$ and $\epsilon m \ge 3 \gamma n$. The subgraph can be chosen greedily and we show this by induction on the size of $\mathcal{I}$. 
	The base case is clear, if we only have $| \mathcal{I}| = 1$ the multi-embedding can be at most a path of length two. But since $V_{i_1}'$ is of size at least $({2 }/ {\beta})^{h} \epsilon m \ge 3 \gamma n \ge 3 \alpha(G)$ there is always a path of length two in $V_{i_1}'$. 
	
	For the induction we first check if there is any vertex of $R$ which has a single vertex embedded. If so choose one of these, say $i_v$ and $f^{-1}(i_v) = v \in H$. By the Property~\eqref{def:emb:2} of a multi-embedding, there are edges between cluster-vertices in $R$ if their vertices in $H$ are in the neighborhood of $v$. Consider any of these edges $\{i_w, i_v\}$. This means that $V_{i_w}$ and $V_{i_v}$ must be $\varepsilon$-regular with density at least $\beta$. This means that at most $\epsilon m$ vertices of $V'_{i_v}$ have a neighborhood smaller than $(\beta - \epsilon) |V'_{i_w}|$ in $V'_{i_w}$. This holds for any neighbor of $v$ in $H$. As $\epsilon m h < m$, there is at least one vertex in $V'_{i_v}$ which has at least $(\beta/2)  |V'_{i_j}|$ neighbors in $V'_{i_j}$ for all $i_j \in f(N_H(v))$, choose one arbitrarily say $s_v$. Choose $V''_{i_j}$ to be the neighborhood of $s_v$ if $f^{-1}(i_j)$ contains a neighbor of $v$ or set it equal to $V'_{i_j}$ if not. Note that $|V''_{j}| \ge ({{2 }/ \beta})^{h-1}\epsilon m $ for all $j \in \mathcal{I} $ and that $f$ restricted to $H \setminus v$ is still a multi-embedding into $\mathcal{I} \setminus i_v$. So we can apply the induction hypothesis and find the subgraph $H \setminus v$ of $G$ such that the vertices are chosen from $V''_{i_j}$. Since all of the necessary $V''_{i_j}$ are in the neighborhood of $s_v$, we have that the graph from the induction together with $s_v$ form $H \subset G$ as desired.
	 
    The case where each cluster has at least two vertices embedded works analogously. Choose a vertex in $\mathcal{I} \subset R$ arbitrarily, say $i_v$, and let $f^{-1}(i_v)$ be the vertices $v_1, v_2$ and possibly $v_3$ of $H$. We call $f(N_H( f^{-1}(i_v))) \subset R$, excluding $i_v$, the set of corresponding neighbors of $i_v$. Because there are double-edges between $i_v$ and its corresponding neighbors, for any of the corresponding neighbors $i_w$ we have that $V_{i_v}$ and $V_{i_w}$ are regular with density at least $1/2 + \beta$.
    So all but at most $\epsilon m$ vertices of $V'_{i_v}$ have degree at least $(1/2 + \beta  - \epsilon) |V'_{i_w}|$ in $V'_{i_w}$. Since removing these bad vertices, which are at most $ \epsilon m h$ many, from $V'_{i_v}$ still leaves us with at least $3 \gamma n$ vertices and there must exist a 2-path (or edge). We choose one of these arbitrarily. By Property~\eqref{def:emb:4} of the multi-embedding, at most two of its vertices need to suffice a neighboring condition to any other cluster $V'_{i_w}$ and since the degree of each is at least $(1/2 + \beta -\epsilon) |V'_{i_w}|$ also the common neighborhood of these two vertices is larger than $(\beta /2) |V'_{i_w}|$. Take this neighborhood to be $V''_{i_w}$ for all corresponding neighbors of $i_v$ (and $V'_{j} = V''_{j}$ where there is no neighborhood condition to be fulfilled). We apply the induction hypothesis on the remaining graph $H \setminus \{v_1,  v_2, v_3 \}$ with its restricted multi-embedding and the sets $V''_{j} \ \forall j \in \mathcal{I} \setminus \{i_{v_1}, i_{v_2}, i_{v_3} \}$ to find $H \setminus \{v_1,  v_2, v_3 \}$ as a subgraph of $G$ which we can extend by the path we chose in $V'_{i_v}$ to get graph $H \subset G$. 
\end{proof}

\begin{corollary}	For every graph $H$ with $|H| = h $ and $\beta >0$ there exist $\epsilon,  \gamma> 0$ and $n_0 \in \NN$ such that the following holds for every graph $G$ on $n > n_0$ vertices and with independence number $\alpha(G) \le \gamma n$ and the sets $ V_1\cup\ldots\cup V_k$ with $|V_i|=m$ given by the Regularity Lemma with the corresponding reduced multigraph $R_{ \beta, \epsilon}$.
	Let $f$ be a multi-embedding of $H$ into $R_{ \beta, \epsilon}$ with $f(V(H))=\mathcal{I}=\{i_1,\ldots,i_t\}$ for some $1\le t\le |H|$. Then let $V'_{i_1},\ldots,V'_{i_t}$ be subsets of $V_{i_1},\ldots,V_{i_t}$ respectively of size at least $({2 }/ {\beta})^{h} \epsilon m$.  Additionally, let $u,v\in V(H)$ and $u_G,v_G\in V(G)$. Then there is an embedding of $H$ in $G$ such that $u$ is mapped to $u_G$ and $v$ is mapped to $v_G$ if the following holds.

    \begin{enumerate}[(i)]
        \item There is a multi-embedding $f$ of $H \setminus \{ u, v\}$ into $R_{\beta, \epsilon}$.
        \item For all edges of the form $\{u, x\}$ and $\{v, y\}$ in $H$ also $deg(u_G, V_{f(x)})\ge \beta |V_{f(x)}|$ and $deg(v_G, V_{f(y)}) \ge \beta |V_{f(y)}|$ in $G$ respectively.
        \item $u$ and $v$ have distance at least 3 in $H$.
    \end{enumerate}
    \label{cor:embedable_struct} 
\end{corollary}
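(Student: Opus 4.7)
The plan is to reduce Corollary 2.7 to Lemma 2.6 (the Embedding Lemma) applied to the smaller graph $H' := H \setminus \{u, v\}$, whose multi-embedding into $R_{\beta, \epsilon}$ is supplied by hypothesis (i). The target subsets $V'_i$ will be further restricted so that every neighbor of $u$ (respectively $v$) in $H$ gets embedded into a vertex of $V_i$ adjacent in $G$ to $u_G$ (respectively $v_G$). Once such a copy of $H'$ in $G$ is produced, extending the map by $u \mapsto u_G$ and $v \mapsto v_G$ yields the desired copy of $H$: every edge of $H$ incident to $u$ or $v$ is preserved by construction, and the edge $\{u,v\}$ itself is absent from $H$ because $d_H(u,v)\ge 3$.

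The key consequence of condition (iii) is that $N_H(u) \cap N_H(v) = \emptyset$, since any common neighbor would place $u$ and $v$ at distance at most two. Thus each vertex of $V(H')$ is a neighbor of at most one of $u,v$, and no single vertex is subject to two conflicting constraints. For each cluster $i \in \mathcal{I}$ I would define $W_i \subseteq V'_i$ by intersecting $V'_i$ with $N_G(u_G)$ if $f^{-1}(i)$ contains a neighbor of $u$, with $N_G(v_G)$ if it contains a neighbor of $v$, and finally deleting $\{u_G, v_G\}$. Condition (ii) gives $|V_i \cap N_G(u_G)| \ge \beta m$ and $|V_i \cap N_G(v_G)| \ge \beta m$, so whenever only one of the restrictions applies to cluster $i$ we still have $|W_i| \ge \beta m - 2 \ge (2/\beta)^{h}\epsilon m$ provided $\epsilon$ is chosen small enough compared to $\beta$ and $h$. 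Invoking Lemma 2.6 with the $W_i$ in place of the $V'_i$ then produces the embedding of $H'$ with the correct neighbourhood conditions, and appending $u\mapsto u_G$, $v\mapsto v_G$ completes the proof.

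The main obstacle is a cluster $i$ with $|f^{-1}(i)|\ge 2$ whose preimage contains both a neighbor of $u$ and a (necessarily distinct) neighbor of $v$; by (iii) these two vertices of $H$ are different, but such a cluster can arise when $d_H(u,v)=3$. Here we need an edge or $2$-path inside $V_i$ with endpoints in the possibly disjoint sets $V_i \cap N_G(u_G)$ and $V_i \cap N_G(v_G)$, so a uniform per-cluster restriction is not enough. I would handle these clusters directly inside the induction of Lemma 2.6 using per-vertex restrictions: both sets have size at least $\beta m \gg \alpha(G)$, so either their common intersection already has size $\ge 3\gamma n$ and contains a $2$-path by the low-independence argument of Lemma 2.6, or one can locate an edge between $V_i \cap N_G(u_G)$ and $V_i \cap N_G(v_G)$ by combining $\alpha(G)\le \gamma n$ with the regularity structure of the pairs incident to cluster $i$. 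All degree bookkeeping to the other clusters of $\mathcal{I}$ is carried along exactly as in the proof of Lemma 2.6.
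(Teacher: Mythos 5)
Your main argument is the same as the paper's: restrict each target set to $N_G(u_G)\cap V_{f(x)}$ (resp.\ $N_G(v_G)\cap V_{f(y)}$) for $x$ a neighbour of $u$ (resp.\ $y$ of $v$), observe that hypothesis~(ii) keeps these sets of size at least $\beta m \ge (2/\beta)^{h}\epsilon m$, invoke Lemma~\ref{lem:embedable_struct} for $H\setminus\{u,v\}$, and glue $u\mapsto u_G$, $v\mapsto v_G$ back on, using $d_H(u,v)\ge 3$ to ensure $N_H(u)\cap N_H(v)=\emptyset$ and to exclude the edge $\{u,v\}$. That part is correct and matches the paper's (rather terse) proof.

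You then single out the situation in which a single cluster $i$ receives \emph{both} a neighbour $x$ of $u$ and a neighbour $y$ of $v$ (possible when $3\le d_H(u,v)\le 4$, since $x$ and $y$ can sit on the same edge or $2$-path in $H[f^{-1}(i)]$), and point out that a uniform restriction $V'_i = V_i\cap N_G(u_G)$ or $V'_i = V_i\cap N_G(v_G)$ no longer suffices. This is a real observation: the paper's proof does not mention it, and in the paper's intended application (Lemma~\ref{lem:absorber_diamondpaths}) the issue is sidestepped by splitting clusters into $6r$ parts and assigning each vertex, edge or $2$-path its own part, so that no split cluster contains both a constrained neighbour of $s$ and one of $t$. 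Your instinct to check this corner case is good.

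However, your proposed resolution of it is not a proof. The first alternative (large common intersection $V_i\cap N_G(u_G)\cap N_G(v_G)$) is fine when it occurs, but the second alternative---``locate an edge between $V_i\cap N_G(u_G)$ and $V_i\cap N_G(v_G)$ by combining $\alpha(G)\le\gamma n$ with the regularity structure of the pairs incident to cluster $i$''---does not follow from either ingredient. The bound $\alpha(G)\le\gamma n$ guarantees an edge \emph{inside} any set of size $>\gamma n$, but two disjoint sets $A,B\subseteq V_i$, each of size $\beta m$, can each be a clique while having no $A$--$B$ edge at all, without creating a large independent set. And $\epsilon$-regularity is a statement about pairs $(V_i,V_j)$ of distinct clusters; it says nothing about the bipartite graph between two subsets of the \emph{same} $V_i$. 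So as written, your argument leaves this case open. If you want the corollary in full generality you need an extra hypothesis excluding it (as the paper's application implicitly arranges), or you must preprocess by refining the partition so that no cluster receives constrained neighbours of both $u$ and $v$.
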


\begin{proof}
    The embedding works the same as Lemma~\ref{lem:embedable_struct}. First fix $u$ and $v$ as $u_G$ and $v_G$, and then for each neighbor $x$ of $u$ choose  $V'_{f(x)} = N(u_G) \cap V_{f(x)}$, same for neighbors of $v$. For all other vertices in $H$ simply choose $V'_{f(i)} = V_{f(i)} $. So all $|V'_i| \ge \beta |V_i|$ and by Lemma \ref{lem:embedable_struct} we can find an embedding of $H\setminus \{u, v\}$ and the embedding of neighbors $u$ and $v$ will also be neighbors of $u_G$ and $v_G$ respectively. The distance $3$ is used to ensure no vertex in $H$ is neighbor to both $u$ and $v$.
\end{proof}

\section{Absorbers} \label{sec:proof_strategy} 
\label{sec:absorbers}

Absorbers are a well known tool and they allow us to prove statements about spanning subgraph structures. Often when working with the Regularity Lemma, we only find subgraph structures which cover almost all of the vertices, so all but a small linear fraction. Absorbers allow us to go the last step, they are structures we set aside in advance and which can ``absorb'' this small fraction of leftover vertices.

\begin{definition}  \label{def:absorber}
  Let $H$ be a graph with $h$ vertices and let $G$ be a graph with $n$ vertices.
  \begin{itemize}
    \item We say that a subset $A \subseteq V(G)$ is \emph{$\xi$-absorbing} for some $\xi > 0$ if for every subset $R \subseteq V(G) \setminus A$ such that $h$ divides $|A| + |R|$ and $|R| \le \xi n$ the induced subgraph $G[A \cup R]$ contains an $H$-factor.

    \item Given a subset $S \subseteq V(G)$ of size $h$ and an integer $t \in \NN$, we say that a subset $A_S \subseteq V(G) \setminus S$ is \emph{$(S, t)$-absorbing} if $|A_S| = h t$ and both $G[A_S]$ and $G[A_S \cup S]$ contain an $H$-factor.
  \end{itemize}
\end{definition}

We use the following lemma which gives a sufficient condition for the existence of $\xi$-absorbers based on abundance of disjoint $(S, t)$-absorbers.
The proof of Lemma \ref{lemma:absorbing} is based on ideas of Montgomery \cite{montgomery2014embedding} and relies on the existence of `robust' sparse bipartite graphs. 

\begin{lemma}[Nenadov, Pehova \cite{nenadov2018ramsey}] \label{lemma:absorbing} 
    Let $H$ be a graph with $h$ vertices and let $\phi > 0$ and $t \in \NN$. Then there exists $\xi$ and $n_o\in \NN$ such that the following is true. Suppose that $G$ is a graph with $n \ge n_0$ vertices such that for every $S \in \binom{V(G)}{h}$ there is a family of at least $\phi n$ vertex-disjoint $(S, t)$-absorbers. Then $G$ contains an $\xi$-absorbing set of size at most $\phi n$.
\end{lemma}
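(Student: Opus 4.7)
The plan is to follow Montgomery's template-based absorbing method, as adapted by Nenadov and Pehova. The core idea is to stitch many $(S,t)$-absorbers together along the edges of a sparse but \emph{robust} bipartite graph $T$ acting as a blueprint; the robustness of $T$ supplies the flexibility needed to accommodate any small remainder $R$, while the abundance hypothesis lets us realize the blueprint using pairwise vertex-disjoint absorbers inside $G$.

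First, I would invoke Montgomery's robust bipartite template lemma: for appropriate linear sizes there exists a bipartite graph $T$ on parts $(X, Y)$ with $|Y| - |X|$ a small multiple of $\xi n$, with bounded maximum degree (hence $O(n)$ total edges), such that for every $W \subseteq Y$ of size exactly $|Y| - |X|$ the graph $T[X \cup (Y \setminus W)]$ has a perfect matching saturating $X$. This is the standard backbone of the method and is constructed via a delicate probabilistic argument combining a random sparse bipartite graph with an expander-like gadget.

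Second, I would realize the template inside $G$. For each $y \in Y$ I designate a distinct $h$-subset $S_y \subseteq V(G)$, and for each edge $e = \{x, y\}$ of $T$ I choose an $(S_y, t)$-absorber $A_e$, insisting that all $S_y$ and all $A_e$ be pairwise vertex-disjoint. Since $T$ has $O(n)$ edges, each absorber uses $h t$ vertices, and at every greedy step fewer than a constant multiple of $n$ vertices are blocked, the hypothesis of $\phi n$ disjoint $(S, t)$-absorbers for each $S$ is comfortably enough to supply a fresh absorber at every step, once the constants of $T$ are scaled against $\phi$. The absorbing set is $A := \bigcup_y S_y \cup \bigcup_e A_e$, whose size is $O(n)$ and can be tuned to be at most $\phi n$.

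Third, I would verify the $\xi$-absorbing property. Given $R \subseteq V(G) \setminus A$ with $|R| \le \xi n$ and $h \mid |A| + |R|$, partition $R$ into $h$-sets $R_1, \ldots, R_\ell$ and choose $W \subseteq Y$ of size $|Y| - |X|$ containing $\ell$ marked vertices $y_1, \ldots, y_\ell$ together with dummies. The robustness of $T$ yields a perfect matching $M$ in $T[X \cup (Y \setminus W)]$; every matched edge $e = \{x, y\} \in M$ contributes the $H$-factor of $G[A_e \cup S_y]$, every $y_i \in W$ is handled by one edge $e_i$ of $T$ whose absorber $A_{e_i}$ now plays the role of an $(R_i, t)$-absorber (indexed correctly because $S_{y_i}$ is being swapped for $R_i$), and the remaining pre-built absorbers are covered by their standalone $H$-factors on $A_e$ or on $A_e \cup S_y$ for unmatched $y \in Y \setminus W$, all guaranteed by Definition~\ref{def:absorber}. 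Concatenating these local $H$-factors yields an $H$-factor of $G[A \cup R]$.

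The main obstacle is the first step: the robust template itself. Montgomery's probabilistic construction must simultaneously deliver sparsity (so that greedy realization in Step~two is feasible) and the strong matching-robustness property (so that Step~three succeeds for every remainder of the right size), and balancing these two demands is the real technical content of the lemma. Steps two and three are essentially bookkeeping: they use only the abundance hypothesis and the defining property of $(S,t)$-absorbers, together with the template handed to us by Step~one.
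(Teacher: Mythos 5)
The paper does not reprove this lemma---it is quoted verbatim from Nenadov and Pehova \cite{nenadov2018ramsey} with only the remark that the proof is ``based on ideas of Montgomery \cite{montgomery2014embedding} and relies on the existence of `robust' sparse bipartite graphs.'' Your high-level outline (robust sparse bipartite template, greedy realization via the abundance hypothesis, matching-robustness to handle the remainder) is therefore pointed in the right direction.

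However, Step~three contains a genuine gap that breaks the argument. You write that for a marked vertex $y_i \in W$ the gadget $A_{e_i}$ ``now plays the role of an $(R_i, t)$-absorber (indexed correctly because $S_{y_i}$ is being swapped for $R_i$).'' This swap is not available to you. By Definition~\ref{def:absorber} the set $A_{e_i}$ was constructed, \emph{before} $R$ was revealed, to be an $(S_{y_i}, t)$-absorber: both $G[A_{e_i}]$ and $G[A_{e_i} \cup S_{y_i}]$ have $H$-factors. Nothing whatsoever is guaranteed about $G[A_{e_i} \cup R_i]$ for an arbitrary $h$-set $R_i$ of leftover vertices; being an $(S,t)$-absorber is a property tied to the specific $S$. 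Compounding this, in your construction $S_{y_i}$ lies inside $A$, and your matching in $T[X \cup (Y \setminus W)]$ leaves every $S_y$ with $y \in W$ (marked \emph{or} dummy) uncovered, so even if the claimed swap were valid the resulting collection of local $H$-factors would not cover all of $A \cup R$. The real technical content of Nenadov and Pehova's argument is precisely the mechanism by which an \emph{arbitrary} remainder $R$ gets threaded through the fixed template; your write-up substitutes an unsupported assertion for that mechanism. (As a smaller point, the template's robustness holds only for $W$ drawn from a designated flexible subset $Z \subseteq Y$, not for every $W \subseteq Y$ of the right size, but this is cosmetic next to the issue above.)
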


We define the following structure as it will be used as the main building block in the remainder of this section.

\begin{definition}[$K_{r}$-diamond path]
A $K_{r}$-diamond path between vertices $u$ and $v$ is the graph formed by a sequence of disjoint vertices $u = v_1, v_2, .. , v_\ell = v$ and disjoint cliques of size $r-1$ in the joint neighborhood of each pair of consecutive vertices. The length of the $K_{r}$-diamond path is $\ell$, the number of vertices in the sequence. 
\end{definition}

\begin{figure}[ht]
\centering
\includegraphics[scale=1]{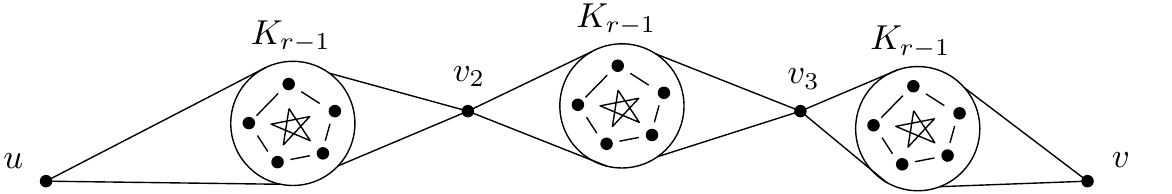} 
\label{fig:diamondpath}
\caption{$K_{r}$-diamond path}
\end{figure}

\subsection{Finding $K_{r}$-diamond paths}

To make use of this lemma we additionally need to find vertex-disjoint $(S, t)$-absorbers in our graph. Observe that if we can find a $K_r$ with disjoint $K_r$-diamond paths attached to each of its vertices, then this structure is ($S,t$)-absorbing for the set $S$ of $r$ free endpoints of the $K_r$-diamond paths. To find vertex-disjoint $(S, t)$-absorbers it is sufficient to find many disjoint $K_{r}$-diamond paths between any two vertices. 

\begin{lemma} \label{lem:absorber_diamondpaths}
For every $r \ge 4$ and $\mu>0$ there exist $\gamma >0$ and $n_0\in \NN$ such that in every graph $G$ on $n\ge n_0$ vertices with minimum degree $\delta(G)\ge \left(1-\frac{2}{r}+\mu\right)n $ and $\alpha(G)\le \gamma n$, after deleting $(\mu/2) n$ many vertices we can still find a $K_{r}$-diamond path of length at most $7$ between any two remaining vertices.
\end{lemma}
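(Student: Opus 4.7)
The deletion set $D$ is adversarial but has size only $(\mu/2)n$, so writing $G^{\ast} := G \setminus D$ one has $\delta(G^{\ast}) \ge (1 - 2/r + \mu/2)n$ and $\alpha(G^{\ast}) \le \gamma n$. Fix an arbitrary pair $u, v \in V(G^{\ast})$; the plan is to construct a $K_r$-diamond path of length at most $7$ between them by passing to the reduced multigraph, finding an abstract ``diamond path'' there, and lifting via Corollary~\ref{cor:embedable_struct}.

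Apply the Regularity Lemma (Lemma~\ref{lem:reg}) to $G^{\ast}$ with parameters $\varepsilon, \beta$ chosen small compared to $\mu, \gamma$, producing a partition $V_0 \cup V_1 \cup \ldots \cup V_k$ and reduced multigraph $R = R_{\beta, \varepsilon}$. By Fact~\ref{fact:min_deg_r}, $R$ has minimum multi-degree at least $2(1 - 2/r + \mu/4)k$. In particular, every cluster-vertex has at least $(\mu/2)k$ double-edge-neighbors, and the underlying simple graph of $R$ has minimum degree at least $(1 - 2/r + \mu/4)k \ge k/2$ for $r \ge 4$, so any two clusters are at simple distance at most $2$. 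The deletion $D$ interferes with the lifting step only through ``damaged'' clusters, i.e.\ those with $|V_i \cap D| > m/2$; a counting argument gives that there are at most $\mu k$ of them, so discarding them costs only an $O(\mu)$ term in the min-multi-degree, which is absorbed into the $\mu/4$ slack.

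The heart of the argument is to build, for any two clusters $C_u, C_v$ of $R$, a sequence $C_u = A_1, A_2, \ldots, A_\ell = C_v$ with $\ell \le 7$ such that each consecutive pair $(A_i, A_{i+1})$ admits a multi-embedded $K_{r-1}$ (in the sense of Definition~\ref{def:embedding}) whose image lies in a set of common multi-neighbors of $A_i, A_{i+1}$ in $R$. Since the simple diameter of $R$ is at most $2$, the underlying ``cluster path'' has at most $3$ vertices; the budget of $7$ cluster-vertices leaves slack to insert one or two auxiliary cluster-vertices per simple step as ``boosters'' when the direct common multi-neighborhood does not immediately support the multi-embedding. Once this diamond-path exists in $R$, one lifts it to $G^{\ast}$ by Corollary~\ref{cor:embedable_struct}, fixing $u$ and $v$ in their clusters; the degree conditions required by the corollary are guaranteed by the minimum-degree bound on $u, v$, and the total vertex budget of the path, $7 + 6(r-1) = 6r + 1$, is a constant easily fitting into the linearly-sized $V_i \setminus D$.

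The main obstacle is the middle step: showing that the ``diamond-diameter'' of $R$ is at most $6$. For each simple hop $A \to B$ in $R$ one must find, inside $N_R(A) \cap N_R(B)$, a collection of $\lceil (r-1)/2 \rceil$ cluster-vertices with the double-edge pattern required to multi-embed a $K_{r-1}$. This is tightest when $r = 4$, where the multi-degree is barely above $k$ and double-edges may be only an $O(\mu)$ fraction of the neighborhood; in that regime the slack in the length budget (allowing a detour through two or three extra clusters) together with the ind-number-driven flexibility of Lemma~\ref{lem:embedable_struct} (turning single edges of $R$ into actual edges and $P_2$'s in $G$) provides the needed margin. Case analysis on the multi-edge types at each hop, aided by the $(\mu/2)k$ lower bound on double-neighbors at every cluster, completes the construction within $7$ diamond steps.
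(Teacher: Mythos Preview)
Your overall framework---regularise, locate the diamond path as a multi-embedding in the reduced multigraph, then lift via Corollary~\ref{cor:embedable_struct}---is exactly the paper's. But the two steps that carry all the content are not done.

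\textbf{The diamond-diameter bound.} You correctly flag this as ``the main obstacle'' and then dispose of it with ``case analysis \ldots\ completes the construction''. This is where the paper spends essentially all of Section~\ref{sec:absorbers}: it introduces the $K_{r+1}$-neighbourhood $\Upsilon_{r+1}(v)$ and proves (Proposition~\ref{prop:upsilonexpand}) that $|\Upsilon^2_{r+1}(v)| \ge k/2$ for every cluster $v$, via four separate lemmas treating double-edge-neighbours, large simple neighbourhoods, and the delicate cases $r=4,5$. Your only quantitative lever is the $(\mu/2)k$ lower bound on double-edge-neighbours, but this tends to zero with $\mu$ while the path length must stay $\le 7$ \emph{uniformly in $\mu$}. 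For $r=4$ a two-step simple path gives a common neighbourhood of size only $O(\mu k)$, and producing a multi-embedded $K_{r-1}$ with the correct double-edge pattern inside such a set is exactly the difficulty---no argument for it appears in your proposal.

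\textbf{Anchoring the endpoints.} You place $u,v$ into clusters $C_u,C_v$ and say Corollary~\ref{cor:embedable_struct}'s hypothesis $\deg(u_G,V_{f(x)}) \ge \beta|V_{f(x)}|$ ``is guaranteed by the minimum-degree bound''. It is not: $u$ may lie in $V_0$, and in any case global minimum degree says nothing about $u$'s degree into a \emph{specific} cluster. The paper handles this with Lemma~\ref{lem:start}, which for each endpoint produces a multi-embedding of $K_r$ using only clusters (save at most one) that the endpoint is genuinely $\beta$-dense to; without this the lift cannot be made to start and end at $u$ and $v$. (A smaller inconsistency: you apply regularity to $G^\ast = G\setminus D$ and then speak of ``damaged'' clusters with $|V_i\cap D|>m/2$; once $D$ is removed there is nothing to intersect.)
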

Note that, for connectivity issues, the lemma only holds for $r\ge4$. To prove this lemma we find a multi-embedding of a $K_{r}$-diamond path in a reduced multigraph and then extract from that a $K_{r}$-diamond path in the original graph. We introduce the notion of a $K_r$-neighborhood $\Upsilon_r(v)$. These are the neighbors of $v$ such that additionally we can find a multi-embedding of a $K_r$ into the reduced multigraph covering both the vertex and $v$.

\begin{definition}
Let $R$ be a reduced multigraph. Then for any vertex $v$ the \emph{$K_r$-neighborhood} $\Upsilon_r(v)$ is defined as follows.
 \begin{align*}
   \Upsilon_r(v) =  \{  w \in V(R) | \exists \text{ a multi-embedding } \psi : V(K_r) \rightarrow V(R)& \text{ s.t. }  \\
   \psi^{-1}(v) \ne\emptyset & \text{ and } \psi^{-1}(w) \ne \emptyset \} 
 \end{align*}
Further we define $\Upsilon^2_r(v) =  \bigcup_{u \in \Upsilon_{r}(v)} \Upsilon_{r}(u)$ to be the second-$K_r$-neighborhood.
\end{definition}
Note that by definition any vertex $v$ is in its own $K_r$-neighborhood assuming there is at least one $K_r$ multi-embedding containing $v$. Then also $\Upsilon_r(v) \subseteq \Upsilon^2_r(v)$.

In order to find $K_{r}$-diamond paths we first show that the $K_{r+1}$-neighborhood for every vertex in the reduced graph is large.

\begin{proposition} \label{prop:upsilonexpand} 
For $r\ge 4$, let $R$ be a reduced multigraph on $k$ vertices with $\delta(R) > (1-2/r)2 k$ then we have 
\[  \left| \Upsilon^2_{r+1}(v)  \right| \ge \frac{k}{2} \quad \quad \forall  v\in V(R). \]

\end{proposition}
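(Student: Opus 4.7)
The plan is to prove the stronger statement that the closed (simple) neighborhood of $v$ in $R$ is contained in $\Upsilon^2_{r+1}(v)$, for every vertex $v$. This would suffice because the multi-degree hypothesis $\delta(R) > 2(1-2/r)k$ forces the simple degree of any vertex to exceed $(1-2/r)k$ (the simple degree being at least half the multi-degree), and for $r \ge 4$ this gives a closed neighborhood of size at least $k/2 + 1$.

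The first ingredient I would establish is that $v \in \Upsilon_{r+1}(v)$ for every vertex $v$, i.e., some $K_{r+1}$ multi-embedding contains $v$. A $K_{r+1}$ multi-embedding assigns weights $1$ or $2$ to its image clusters, summing to $r+1$, with doubled clusters pairwise joined by double edges and all pairs joined by at least one edge in $R$. I would construct one through $v$ by taking $v$ as a singleton cluster and iteratively extending inside its multi-neighborhood: the minimum multi-degree guarantees at least one double edge incident to $v$, and at each step I append either a singleton or a double-adjacent pair to the current structure. Joint multi-degrees in the shrinking common neighborhoods remain sufficient since each step removes a bounded number of vertices while multi-degrees start above $2(1-2/r)k$.

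The main step is to show, for every edge $\{v,w\}$ in the simple graph underlying $R$, that $w \in \Upsilon^2_{r+1}(v)$. The direct route finds a $K_{r+1}$ multi-embedding containing both $v$ and $w$, giving $w \in \Upsilon_{r+1}(v)$ and hence $w \in \Upsilon^2_{r+1}(v)$ via the choice $u = w$ (using $w \in \Upsilon_{r+1}(w)$ from the first ingredient). If $\{v,w\}$ is a double edge, double both endpoints and extend by a weight-$(r-3)$ multi-clique in their common multi-neighborhood. If it is a single edge, keep both as singletons and extend by a weight-$(r-1)$ multi-clique in the common neighborhood. When this direct construction fails, most critically in the tight case $r=4$ with a single edge (where the common simple-neighborhood of $v$ and $w$ can even be empty), I would fall back on a two-hop detour: find an intermediate vertex $u$ with $u \in \Upsilon_{r+1}(v)$ and $w \in \Upsilon_{r+1}(u)$, for instance a common double-neighbor of $v$ and $w$, whose existence follows from a counting argument on the number of double edges forced by the multi-degree bound.

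The hard part will be the tight case $r=4$, which is precisely why the statement involves $\Upsilon^2$ rather than $\Upsilon$. Here the simple minimum degree is only $k/2$, common simple-neighborhoods of adjacent vertices can vanish, and the one-step extension route for single edges breaks down. The detour through an intermediate vertex is then essential; its validity requires carefully exploiting the fact that double edges, though potentially sparse per vertex in the worst case, are numerous enough in total to always provide the needed common neighbors.
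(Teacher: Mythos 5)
Your plan captures the paper's underlying strategy — treat double-edge neighbors via a maximal double-edge clique, treat single-edge neighbors by extending in a common neighborhood, and allow a two-hop detour when a direct extension fails — but the single-edge case, which you correctly identify as the crux, has a concrete gap.

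For $r=4$ you propose to reach a single-edge neighbor $w$ via a common double-neighbor of $v$ and $w$ and assert its existence ``follows from a counting argument on the number of double edges forced by the multi-degree bound.'' This is not true. With $\delta(R)>k$, a vertex with simple degree $k-1$ needs only \emph{two} double edges, so $v$ and $w$ can each have double-edge-neighborhoods of size two that are disjoint; no common double-neighbor exists. The paper avoids this in Lemma~\ref{lem:requal4} with a different dichotomy: writing $D$ for the double-edge-neighborhood of $v$ and $S=N(v)\setminus D$, a degree count shows that $w$ either has a (single) edge into $D$ — giving the $K_5$-embedding $(2,2,1)$ on $(v,x,w)$ with $x\in D$ — or a \emph{double} edge into $S$ — giving the $K_5$-embedding $(1,2,2)$ on $(v,w,x)$. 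Neither branch uses a common double-neighbor. Your direct ``weight-$(r-1)$ multi-clique in the common neighborhood'' also breaks down for $r=5$: the common simple-neighborhood of a single-edge pair only has size about $k/5$, and the induced minimum multi-degree bound there is vacuous, so you cannot find a $K_4$-multi-embedding inside it; the paper instead uses the sharper case analysis of Lemma~\ref{lem:requal5}, which passes through a vertex in $v$'s double-edge-neighborhood rather than a common neighborhood of $v$ and $w$.

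A secondary point: you set out to prove the stronger claim $N(v)\subseteq\Upsilon^2_{r+1}(v)$ for all $r\ge 4$, but the paper only establishes this for $r\in\{4,5\}$. For $r\ge 6$ it instead observes that, degree-counting, either the double-edge-neighborhood already exceeds $k/2$ (Lemma~\ref{lem:double-edgeconnection}) or $|N(v)|\ge(1-1/r)k$ and Lemma~\ref{lem:highsingle} applies — which is enough for the proposition and sidesteps having to reach every single-edge neighbor. Proving the stronger containment uniformly would require you to resolve exactly the single-edge cases where your argument is currently missing, so this choice makes your task harder, not easier.
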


Before we prove this proposition, we prove a series of lemmas about the size of $K_r$-neighborhoods. Note that this is easier for large $r$ thus we have to consider some special cases for small values of $r$. We start with some general lemmas that hold for all $r$. 

In the following, a clique of double-edges denotes a clique where all edges are double-edges and the double-edge-neighborhood of a vertex $v$ is the set of neighbors connected to $v$ with a double-edge. 
\begin{lemma} \label{lem:double-edgeconnection}
For $r\ge 4$, let $R$ be a reduced multigraph on $k$ vertices with $\delta(R) > (1-2/r)2 k$. For any vertex $v$ the vertices connected to $v$ by double-edges are contained in the $K_{r+1}$-neighborhood $\Upsilon_{r+1}(v)$.
\end{lemma}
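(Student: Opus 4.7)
The plan is to construct, for any vertex $v$ of $R$ and any double-edge neighbor $w$ of $v$, an explicit multi-embedding $\psi\colon V(K_{r+1})\to V(R)$ whose image contains both $v$ and $w$. Exploiting the double edge $\{v,w\}$, I would assign two vertices of $K_{r+1}$ to each of $v$ and $w$, using up four of the $r+1$ vertices; Property~(3) of Definition~\ref{def:embedding} is then satisfied for these two $2$-clusters. The remaining $r-3$ vertices would be placed as singletons in distinct clusters $z_1,\ldots,z_{r-3}$, each chosen to lie in $N(v)\cap N(w)$ (Property~(2)) and to be pairwise adjacent in the underlying simple graph. Property~(4) is automatic because cluster sizes never exceed two, and Property~(1) holds by construction.

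The first step is to extract quantitative bounds from $\delta(R) > 2k(1-2/r)$. Writing $N^=_R(x)$ for the set of double-edge neighbors of $x$ and using the multiplicity identity $d_R(x) = |N_R(x)| + |N^=_R(x)|$ together with $|N^=_R(x)| \le |N_R(x)| \le k-1$, one obtains
\[|N_R(x)| > k\bigl(1 - \tfrac{2}{r}\bigr) \quad\text{and}\quad |N^=_R(x)| > k\bigl(1 - \tfrac{4}{r}\bigr)\]
for every $x\in V(R)$. Inclusion-exclusion then yields $|N(v)\cap N(w)| > k(1 - 4/r)$, and every vertex in this common neighborhood has simple degree exceeding $k(1-6/r)$ inside it. For the base case $r=4$ only one singleton is needed, and the first bound supplies it.

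The main obstacle is extending the singleton clique to size $r-3$ when $r \ge 5$. The greedy estimate
\[\Bigl|N(v)\cap N(w) \cap \bigcap_{j\le i} N(z_j)\Bigr| > k\Bigl(1 - \tfrac{2(i+2)}{r}\Bigr)\]
stays positive only while $i < r/2 - 2$, which is insufficient for $r-3$ singletons. I would overcome this by allowing some of the $r-3$ remaining $K_{r+1}$ vertices to be placed as $2$-clusters instead of singletons; each added $2$-cluster $z$ must satisfy the stronger requirement of being a common double-edge neighbor of $v$, $w$ and every earlier $2$-cluster, but in return reduces the required number of singletons by two. The proof then splits into cases based on whether the common double-edge neighborhood $N^=_R(v)\cap N^=_R(w)$ is non-empty, which the lower bound $|N^=_R(x)| > k(1-4/r)$ makes automatic for $r\ge 9$, and exploits the strict inequality in $\delta(R) > 2k(1-2/r)$ (whose tightness is witnessed by the blown-up $K_{r/2}$ with all double edges) to keep the candidate sets non-empty through each step. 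Once $\psi$ is specified, checking Properties~(1)--(4) of Definition~\ref{def:embedding} is routine.
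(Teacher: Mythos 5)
Your proposal correctly identifies the core difficulty---with only the two double-edge clusters $v$ and $w$, greedy counting lets you place at most about $r/2 - 2$ singletons in their common neighborhood, well short of the $r-3$ needed---but the proposed remedy of swapping some singletons for additional $2$-clusters cannot close the gap under naive degree bounds. Here is the quantitative obstruction. Suppose your multi-embedding uses $p$ double-edge clusters (including $v,w$) and $q$ singletons, so $2p + q = r+1$. Since every vertex of $R$ has at most $2k/r$ simple non-neighbors and at most $4k/r$ double-edge non-neighbors, greedily building the double-edge clique requires its joint double-edge neighborhood to stay non-empty, giving $p < r/4 + 1$, and greedily adding the $q$ singletons in the joint simple neighborhood of all $p+q-1$ earlier picks requires $p + q < r/2 + 1$, i.e.\ $r+1-p < r/2+1$, i.e.\ $p > r/2$. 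These two constraints force $r/2 < p < r/4 + 1$, which is impossible for every $r \ge 4$. So no trade-off between $2$-clusters and singletons rescues the greedy construction, and the case split on whether $N^{=}(v)\cap N^{=}(w)$ is non-empty does not help either.

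The missing ingredient is the maximality argument the paper uses. Take the \emph{largest} double-edge clique $C \ni v,w$, say $|C| = \ell$, and let $S$ be its joint simple neighborhood. Maximality of $C$ forces every vertex of $S$ to have at most $2\ell - 1$ edges into $C$ (otherwise $C$ could be extended), while every vertex outside $S$ has at most $2(\ell - 1)$; double-counting $\deg(C, V(R))$ then yields $|S| > 2k - 4\ell k/r$. This is literally twice the naive bound $|S| > k - 2\ell k/r$, and that factor of two is exactly what is needed: greedily picking inside $S$ (each step removing at most $2k/r$ simple non-neighbors) now yields a simple clique of size $r - 2\ell + 1$, which together with the $\ell$ double-edge clusters gives the $K_{r+1}$ multi-embedding. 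Without an improvement of this kind your bounds stay off by a factor of two throughout, independently of how you split between $2$-clusters and singletons.
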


\begin{proof}
In order to embed $K_{r+1}$ we need a clique of double-edges of size $\ell$ and a clique of size $r+1-2\ell$ in the neighborhood of this clique. Note that the double-edge itself is already a clique of size $2$. In the following, we show that for every $2\le \ell\le (r+1)/2$ we can find such an embedding given that $\ell$ is the size of a maximal clique of double-edges. 

Fix any double-edge of $v$ and take the largest clique of double-edges containing the double-edge. Let $\ell$ be the size of the clique, and let $S$ be the set of all vertices which lie in the joint neighborhood of all vertices of the clique. As we assumed the clique of double-edges to be maximal we know that every vertex in $S$ has at most $2\ell-1$ edges into the clique. Every vertex that is not in $S$ has to have at least one non-neighbor in the clique and can thus not have more than $2(\ell-1)$ edges into the clique. Moreover, by our minimum degree condition in $R$ we know that every vertex in the clique has at least $(1-r/2)2k $ edges. Combining this, we get
\[ (2\ell-1)|S|+2(\ell-1)(k-|S|) > \ell\left(1-\frac{2}{r}\right)2k, \]
from which we conclude that
\[|S| > 2k-\frac{4k\ell}{r} .\numberthis\label{eq:lowerS} \]
For any vertex $v \in R$ it holds that the neighborhood 
	\[|N(v)| \ge \deg(v) /2 > \left(1 - \frac{2}{r}\right) k. \]
In particular the number of vertices not in the neighborhood of a vertex is less than $\frac{2k}{r} $. So by greedily picking vertices one by one we can choose at least 
	\[\left\lceil  \frac{\ |S|\ }{\frac{2 k}{r}   } \right\rceil \ge r-2\ell+1 \]
many vertices. This gives us a clique of double-edges of size $\ell$ and in the joint neighborhood a clique of size $r-2\ell+1$ into which we can find a multi-embedding of $K_{r+1}$.
\end{proof}

\begin{lemma} \label{lem:highsingle}
For $r\ge 3$, let $R$ be a reduced multigraph on $k$ vertices with $\delta(R) > (1-2/r)2 k$. For any vertex $v$ in the $R$, if the neighborhood of $v$ is of size at least $ (1-1/r)k$, then $N(v) \subseteq \Upsilon_{r+1}(v)$.

\end{lemma}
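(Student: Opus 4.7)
Proof plan: Fix an arbitrary $u \in N(v)$; we construct a $K_{r+1}$-multi-embedding of $R$ containing both $v$ and $u$. The main idea is to embed $v$ as its own single cluster and reduce to producing a $K_r$-multi-embedding of $R[N(v)]$ that contains $u$: since $v$ is adjacent to every cluster in $N(v)$ by at least one edge, combining such a $K_r$-multi-embedding with $v$ as a single cluster directly yields the desired $K_{r+1}$-multi-embedding of $R$ (all constraints in Definition~\ref{def:embedding} are met, since $v$ only hosts one vertex of $K_{r+1}$).

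Setting $k' := |N(v)| \ge \tfrac{r-1}{r}k$, I first check that $R[N(v)]$ inherits a strong minimum-multi-degree condition. For every $w \in N(v)$,
\[ \deg_{R[N(v)]}(w) \;\ge\; \deg_R(w) - 2(k-k') \;\ge\; \tfrac{2(r-3)}{r}k \;\ge\; \bigl(1 - \tfrac{2}{r-1}\bigr)\,2k', \]
which is exactly the hypothesis of Lemma~\ref{lem:double-edgeconnection} with $r-1$ in place of $r$. For $r \ge 5$ (so $r-1 \ge 4$) this lemma applies inside $R[N(v)]$: if $u$ has any double-edge in $R[N(v)]$ then Lemma~\ref{lem:double-edgeconnection} provides a $K_r$-multi-embedding of $R[N(v)]$ covering $u$ and we are done. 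If instead $u$ has no double-edge inside $R[N(v)]$, the same degree count forces $|N(u) \cap N(v)| \ge \tfrac{2(r-3)}{r}k$, so $u$ has linearly many common single-neighbors with $v$, and I can assemble the required $K_r$-multi-embedding in $R[N(v)]$ by hand inside $R[N(u) \cap N(v)]$, whose minimum multi-degree is still linear in $k$ by one more restriction-of-degree count.

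The small cases $r = 3, 4$ are handled by direct construction. For $r=3$, a short contradiction using $\deg(u) > \tfrac{2k}{3}$ and $|N(v)| \ge \tfrac{2k}{3}$ shows that $N(u) \cap N(v) \neq \emptyset$: otherwise $N(u) \subseteq V(R) \setminus N(v)$, forcing $|N(u)| \le k/3$, contradicting $|N(u)| \ge \deg(u)/2 > k/3$. Any common neighbor $z$ then yields a $K_4$-multi-embedding on three clusters with $v, u$ as singles and $z$ hosting two vertices of $K_4$. For $r=4$, the same type of count gives $|N(u)\cap N(v)| \ge k/4$, and a further application shows $R[N(u) \cap N(v)]$ has minimum multi-degree $> k/2$, so in particular it contains an edge, which together with $v, u$ as single clusters and that edge as a (double cluster + single cluster) pair gives the missing $K_5$-multi-embedding.

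The main obstacle is the subcase where $u$ has no double-edge inside $R[N(v)]$: Lemma~\ref{lem:double-edgeconnection} does not apply directly there, and one must extract the remaining $K_{r-1}$-structure by hand from the twice-restricted induced subgraph $R[N(u) \cap N(v)]$, being careful to verify that enough of the minimum-degree condition survives both restrictions to permit building up the embedding cluster-by-cluster (compressing pairs into double clusters whenever the common neighborhood contains a double-edge, and filling the rest with single clusters from the abundant common single-neighbors).
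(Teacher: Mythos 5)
Your approach is close to the paper's in its main case: both restrict to $R[N(v)]$, verify the inherited degree bound $\delta(R[N(v)]) > (1-\frac{2}{r-1})\,2|N(v)|$, and invoke Lemma~\ref{lem:double-edgeconnection} inside $R[N(v)]$ with parameter $r-1$ when $u$ has a double edge there. The difference lies in what you do when $u$ has \emph{no} double edge inside $R[N(v)]$. The paper runs a clean induction on $r$ with base case $r=3$: in that subcase $u$'s neighborhood inside $R[N(v)]$ has size at least $(1-\frac{1}{r-1})|N(v)|$, so one applies Lemma~\ref{lem:highsingle} itself with parameter $r-1$ to $R[N(v)]$. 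You instead propose an ad hoc ``by hand'' construction inside $R[N(u)\cap N(v)]$ and explicitly flag it as unresolved. That subcase is in fact vacuous for $r\ge 5$: if $u$ has no double edge to any cluster of $N(v)$, its double-edge neighborhood $D_u$ lies in $V(R)\setminus N(v)$, so $|D_u| \le k - |N(v)| \le k/r$, and then
\[
|N(u)| = \deg_R(u) - |D_u| > \left(1-\tfrac{2}{r}\right)2k - \tfrac{k}{r} = \left(2-\tfrac{5}{r}\right)k \ge k
\]
for $r\ge 5$, a contradiction. So for $r\ge 5$ every $u\in N(v)$ has a double edge inside $R[N(v)]$, Lemma~\ref{lem:double-edgeconnection} always applies, and your ``main obstacle'' never arises.

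Your $r=3$ construction is correct. Your $r=4$ argument, however, has a genuine error: from $|N(u)\cap N(v)|\ge k/4$ alone you cannot conclude that $R[N(u)\cap N(v)]$ has minimum multi-degree $> k/2$ (the crude bound $\deg_R(w)-2(k - k/4) > k - 3k/2$ is negative, and nothing rules out $N(u)\cap N(v)$ being independent at that size). The fix is to run the same double-edge dichotomy as above: if $u$ has a double edge to some $w\in N(v)$, embed $K_5$ with two vertices each on $u$ and $w$ and one on $v$; otherwise the displayed computation with $r=4$ gives $|N(u)| > 3k/4$, hence $|N(u)\cap N(v)| > k/2$, and then $R[N(u)\cap N(v)]$ has positive minimum degree and so contains the edge you need to complete the $K_5$-multi-embedding.
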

\begin{proof}
We apply induction on $r$ by looking at the neighborhood of a vertex finding that the appropriate minimum degree conditions hold. 
The lemma is true for $r= 3$ since then any neighbor $u$ of $v$ has at least one vertex $w$ in the joint neighborhood with $v$ and we can create a multi-embedding $\psi$ which maps one vertex of a $K_4$ to $v$ and $u$ and maps the two remaining vertices to $w$. This is a valid multi-embedding of a $K_{4}$ and proves $N(v) \subseteq \Upsilon_{4}(v)$. This builds our induction base.

For $r>3$ consider for any vertex $u\in N(v)$ the joint neighborhood with $v$.
 \[ \deg(u, {N(v)}) > (1-2/r) 2k - 2(k - |N(v)|) \ge  (1-2/(r-1)) 2|N(v)|  ,   \]
 where in the last step we use that $|N(v)| \ge \frac{r-1}{r} k$. To prove that $u \in \Upsilon_{r+1}(v)$ it suffices to show that there is a $K_r$ multi-embedding containing $u$ in $R[N(v)]$, the subgraph induced by $N(v)$. Now $\delta(R[N(v)]) > (1-2/(r-1)) 2|N(v)|$ so for any vertex $u \in R[N(v)]$, by counting the edges, there must be either a double-edge containing $u$, in which case Lemma~\ref{lem:double-edgeconnection} gives at least one $K_r$ multi-embedding, or $u$ has a large neighborhood, $(1-2/(r-1))2|N(v)| \ge (1-1/(r-1))|N(v)|$, in which case we apply the induction on the subgraph $R[N(v)]$ so, in fact, in the subgraph $R[N(v)]$ any neighbor of $u$ is in $\Upsilon_{r}(u)$ and also $u$ is contained in a $K_r$.
\end{proof}

\begin{lemma} \label{lem:requal4}For $r = 4$, let $R$ be a reduced multigraph on $k$ vertices with $\delta(R) > (1-2/r)2 k$, then $N(v) \subseteq \Upsilon_{r+1}(v)$.
\end{lemma}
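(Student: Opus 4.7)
Write $D(v)$ for the set of double-edge neighbors of $v$ in $R$, so $\deg(v)=|N(v)|+|D(v)|>k$. Given a neighbor $u$ of $v$, my plan is to exhibit a multi-embedding of $K_5$ into $R$ whose image contains both $v$ and $u$, showing $u\in\Upsilon_5(v)$. The argument splits on the multiplicity of the edge $\{u,v\}$. If $\{u,v\}$ is a double edge, so $u\in D(v)$, then Lemma~\ref{lem:double-edgeconnection} (whose hypothesis $\delta(R)>(1-2/r)2k=k$ is exactly our assumption for $r=4$) immediately gives $u\in\Upsilon_5(v)$.

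The main case is when $\{u,v\}$ is a single edge. Here Lemma~\ref{lem:highsingle} is of no help: $\delta(R)>k$ only forces $|N(v)|>k/2$, well short of the $3k/4$ it would need. Instead I plan to use the most economical multi-embedding of $K_5$, into just three clusters $v$, $x$ and $u$: send two vertices of $K_5$ to $v$, two more to $x$, and the last one to $u$. Inspecting Definition~\ref{def:embedding}, this is a valid multi-embedding exactly when $vu$, $vx$ and $ux$ are edges of $R$ and the edge $vx$ has multiplicity two: the only pair of clusters both receiving $\ge 2$ vertices is $(v,x)$, so Property~\ref{def:emb:3} fires only there, and Property~\ref{def:emb:4} is automatic because every cluster carries at most two $K_5$-vertices. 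It therefore suffices to find $x\in D(v)\cap N(u)$, or, symmetrically, $x\in D(u)\cap N(v)$.

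To produce such an $x$ I argue by contradiction. If $D(v)\cap N(u)=\emptyset$ then, using $u\notin D(v)$ (since $\{u,v\}$ is single) and $v\in N(u)$, one has $D(v)\subseteq V(R)\setminus(N(u)\cup\{u\})$ and hence $|D(v)|\le k-1-|N(u)|$. Combined with $|N(v)|+|D(v)|=\deg(v)>k$ this forces $|N(v)|>|N(u)|+1$. The symmetric hypothesis $D(u)\cap N(v)=\emptyset$ yields $|N(u)|>|N(v)|+1$, a contradiction, so one of the two sets is nonempty.

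The subtlety I expect to require the most care is picking the correct embedding shape. The minimum-degree bound $\delta(R)>k$ is too weak to supply a clique of double edges as in Lemma~\ref{lem:double-edgeconnection} or a large common neighborhood carrying a triangle as in Lemma~\ref{lem:highsingle}. The way out is to exploit that Property~\ref{def:emb:3} only requires a double edge between clusters that both carry at least two $K_5$-vertices; the $(2,1,2)$-embedding above then needs only the single double edge $vx$, which the short counting argument provides.
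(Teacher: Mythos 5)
Your proposal is correct, and it follows the same overall approach as the paper: after dispatching the double-edge case via Lemma~\ref{lem:double-edgeconnection}, you look for a third cluster $x$ so that the $(2,2,1)$-shaped multi-embedding of $K_5$ into $\{v,x,u\}$ works, and split into the two cases $x\in D(v)\cap N(u)$ versus $x\in D(u)\cap N(v)$, which are exactly the paper's two cases. The only difference is how the counting is closed: the paper bounds $\deg(u)$ directly (if $u$ has no edge into $D(v)$ and no double edge into $N(v)\setminus D(v)$ then $\deg(u)\le 2k-\deg(v)<k$, contradiction), whereas you run a symmetric argument that the two emptiness assumptions force $|N(v)|>|N(u)|+1$ and $|N(u)|>|N(v)|+1$ simultaneously. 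Both are short and correct; yours is a touch cleaner in its symmetry, the paper's is a touch more local in that it never compares $|N(u)|$ with $|N(v)|$. Your explicit verification of Properties~\eqref{def:emb:3} and~\eqref{def:emb:4} for the $(2,2,1)$-embedding is the right thing to check and is also what makes the paper's construction valid.
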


\begin{proof}
For any neighbor of $v$ we want to find a multi-embedding of $K_5$ mapping to $v$ and that neighbor.

By Lemma~\ref{lem:double-edgeconnection}, every double-edge-neighbor of $v$ is in $\Upsilon_{r+1}(v)$. For all other vertices $w \in N(v)$ we claim that either there is an edge between $w$ and a vertex $x$ in the double-edge-neighborhood of $v$, in which case we can map two vertices to $x$, two vertices to $v$ and one to $w$ to get a multi-embedding of $K_5$ or, in the other case, there is a double-edge between $w$ and another vertex $x$ in $N(v)$ and then we can map two vertices to $x$ and $w$ and one to $v$ to get a multi-embedding of $K_5$.

Let $D$ be the double-edge-neighborhood of $v$ and $S = N(v) \backslash D$. Then for any vertex $w \in N(v)$, if $w$ has no edge to any vertex in $D$ and at most one edge to any vertex in $S$, then 
\[ \deg(w) \le |S| + 2(k - |S| - |D|) \le 2k - (2|D| + |S|) \osref{$(2|D|+|S|)=\deg(v)$}< k   \]
which is a contradiction to the assumption that every vertex in the reduced graph $R$ has degree greater than $(1-2/r)2k = k$ for $r=4$.
\end{proof}

\begin{lemma} \label{lem:requal5}
For $r = 5$, let $R$ be a reduced multigraph on $k$ vertices with $\delta(R) > (1-2/r)2 k$ then $N(v)  \subseteq \Upsilon^2_{r+1}(v)$.
\end{lemma}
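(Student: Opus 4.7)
The plan is to show that for each $w \in N(v)$ one can produce a ``hub'' vertex $u$ that lies in a common $K_6$-multi-embedding with $v$ and also one with $w$, so that $u \in \Upsilon_6(v) \cap \Upsilon_6(w)$ and hence, by the symmetry of $\Upsilon_6$, $w \in \Upsilon^2_6(v)$. The two available tools for certifying membership in $\Upsilon_6$ are Lemma~\ref{lem:double-edgeconnection} (double-edge neighbours of $x$ lie in $\Upsilon_6(x)$) and Lemma~\ref{lem:highsingle} (if $|N(x)| \ge 4k/5$ then $N(x) \subseteq \Upsilon_6(x)$); by the latter combined with symmetry, setting $T := \{x \in V(R) : |N(x)| \ge 4k/5\}$, every $u \in N(v) \cap T$ automatically satisfies $u \in \Upsilon_6(v)$.

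First I would dispatch the easy cases. If $v \in T$ or $w \in T$, Lemma~\ref{lem:highsingle} yields $w \in \Upsilon_6(v)$ directly. Otherwise $\delta(R) > 6k/5$ together with $|N(v)|, |N(w)| < 4k/5$ forces $|D_v|, |D_w| > 2k/5$, where $D_x$ denotes the double-edge neighbourhood of $x$. If $w \in D_v$ we are done by Lemma~\ref{lem:double-edgeconnection}; otherwise $vw$ is a single edge and I look for a common double-edge neighbour $u \in D_v \cap D_w$, which works by two applications of Lemma~\ref{lem:double-edgeconnection}. Should that intersection be empty, the next candidates for $u$ are elements of $D_v \cap N(w) \cap T$, of $D_w \cap N(v) \cap T$ (each combining Lemmas~\ref{lem:double-edgeconnection} and~\ref{lem:highsingle}), or of the common neighbourhood $N(v) \cap N(w) \cap T$.

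The main obstacle is the extremal regime in which all of the intersections above are simultaneously empty. There $|D_v|$ and $|D_w|$ must be very close to $2k/5$ (so $|N(v)|$ and $|N(w)|$ are close to $4k/5$), $D_v$ and $D_w$ are disjoint subsets of $V \setminus T$, and no common neighbour of $v$ and $w$ lies in $T$. I expect this regime to be ruled out by a careful edge count: because each vertex $u \in D_v$ itself lies outside $T$, it satisfies $|D_u| > 2k/5$, so the total number of double edges incident to $D_v$ is at least $(2k/5)^2$, and they must land somewhere in $V$. Combined with the slack forced by $D_v \cap D_w = \emptyset$ and the other emptiness assumptions, this should produce either a double edge joining $D_v$ and $D_w$ (contradicting $D_v \cap D_w = \emptyset$) or a double edge lying inside $N(v) \cap N(w)$; the latter would give a direct $K_6$-multi-embedding through $v$ and $w$ via a $(2,2,1,1)$ cluster profile and yield $w \in \Upsilon_6(v)$ outright.
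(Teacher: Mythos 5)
Your overall framing is correct: since the multi-embedding relation is symmetric, $w\in\Upsilon^2_6(v)$ is equivalent to finding a single ``hub'' $u$ with $u\in\Upsilon_6(v)\cap\Upsilon_6(w)$, and Lemma~\ref{lem:double-edgeconnection} and Lemma~\ref{lem:highsingle} (via the set $T$) are the right certificates for membership. The easy cases (one of $v,w$ in $T$; $w\in D_v$; $D_v\cap D_w\neq\emptyset$; a hub in one of $D_v\cap N(w)\cap T$, $D_w\cap N(v)\cap T$, $N(v)\cap N(w)\cap T$) are all correct. This is, however, a genuinely different route from the paper. The paper does not split into a $T$ vs.\ non-$T$ analysis at all: it only looks at $D=D_v$, assumes $|D|\ge 2k/5$ (the other case being handled by Lemma~\ref{lem:highsingle}), and for any $u\in N(v)$ with no double edge into $D$ bounds $|N(u)|\ge 7k/5-|N(v)|$, then picks $x\in D\cap N(u)$ and $y\in N(v)\cap N(u)\cap N(x)$ to exhibit a $K_6$-multi-embedding on $\{v,v,x,x,u,y\}$ directly, giving $u\in\Upsilon_6(v)$ rather than merely $u\in\Upsilon^2_6(v)$. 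That is shorter and avoids the residual extremal regime altogether.

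The gap in your proposal is precisely that residual extremal regime, which you do not actually rule out. You assert that there $D_v$ and $D_w$ are ``disjoint subsets of $V\setminus T$,'' but that does not follow from your emptiness hypotheses: you only know $D_v\cap N(w)\cap T=\emptyset$ (and symmetrically), so a vertex $u\in D_v$ may well lie in $T$ provided $u\notin N(w)$ --- and such a $u$ is indeed useless as a hub, since $u\in T$ only certifies $\Upsilon_6(u)\supseteq N(u)$ and $w\notin N(u)$. So the premise for your double-edge count ($|D_u|>2k/5$ for all $u\in D_v$) is unjustified. Moreover, even granting it, ``a double edge joining $D_v$ and $D_w$'' does not contradict $D_v\cap D_w=\emptyset$ (those are two distinct vertices), and it is not clear how such an edge yields a hub for $v$ and $w$; only the alternative you mention --- a double edge with both endpoints in $N(v)\cap N(w)$, giving a $(2,2,1,1)$ profile $K_6$ through $v$ and $w$ --- manifestly works. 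To close the proof you would need to carry out the edge count carefully and show it always produces that configuration, or switch to the paper's more economical argument, which sidesteps the case analysis by constructing the $K_6$ directly around an arbitrary $u\in N(v)$.
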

\begin{proof} Let $D$ be the double-edge-neighborhood of $v$. If $|D| \le 2k/5$ we have $N(v)  \ge 4k/5$ and by Lemma \ref{lem:highsingle} again we have that $N(v) \subseteq \Upsilon_{r+1}(v)$, so we assume $|D| \ge 2k/5$.

Look at any fixed $u \in N(v)\setminus\Upsilon^2_{r+1}(v)$. If $u$ has a double-edge to $D$, then by Lemma~\ref{lem:double-edgeconnection} it has distance two with regards to the $K_r$-neighborhood $\Upsilon_{r+1}$ and we are done. So we can assume it has only single edges or no edges to vertices in $D$. In particular, the double-edge-neighborhood of $u$ does not contain $D$ so its size is at most $k-|D|$. 

So since $|D| \ge 2k/5$, we have that 
\[N(u) \ge \frac{6k}{5} - \left(k-|D|\right) \ge \frac{k}{5} + |D| \ge \frac{7k}{5} - N(v), \]
where the last step follows from $N(v) > 6k/5 - |D|$.
Since the minimum neighborhood of any other vertex is $3k/5$, the common intersection of $u$ with any other vertex $x$ must be more than $7k/5 - N(v)+ 3k/5 -k = k - N(v)$, so we can choose a vertex $y$ in the joint neighborhood of $v$, $u$ and $x$.

Now choose $x$ in $D$. The multi-embedding of $K_6$ follows by embedding two vertices each in $v$ and $x$ one each in $u$ and $y$. So then $u \in  \Upsilon_{r+1}(v)$.
In any case  $u \in \Upsilon^2_{r+1}(v)$ and the lemma follows.

\end{proof}
Combining the previous lemmas, we are now ready to prove Proposition \ref{prop:upsilonexpand}.
\begin{proof}[Proof of  Proposition \ref{prop:upsilonexpand}]
For $r \ge 8$ the double-edge-neighborhood of every vertex is greater than $k/2$ so by Lemma \ref{lem:double-edgeconnection} this follows immediately. For $r \ge 6$ by looking at the degree, for each vertex either the double-edge-neighborhood is greater than $k/2$ or the total neighborhood is greater than $(1-1/r)k$, so by Lemma \ref{lem:double-edgeconnection} or Lemma \ref{lem:highsingle} the proposition follows. For $r = 4,5$  we have Lemmas \ref{lem:requal4} and \ref{lem:requal5} respectively,  where in both cases it is easy to see that $|N(v)|\ge k/2$.
\end{proof}

The next lemma is about connecting one fixed vertex $v$ in $G$ to $K_{r+1}$-embedable structures as follows. Given $v$, we want to find a multi-embedding of $K_{r-1}$ into the neighborhood of $v$ i.e.\ clusters that $v$ has many edges to. We then want to extend this $K_{r-1}$ to a $K_{r}$ by finding a vertex in the joint neighborhood of the clique (not necessarily in $N(v)$). This is a preparation step to apply Corollary~\ref{cor:embedable_struct}. 

\begin{lemma} \label{lem:start} 
     Fix a vertex $v$ in $G$ and a reduced multigraph $R_{\beta, \epsilon}$ of $G$. Let $Q_v$ be the set of vertices $i \in V(R_{\beta, \epsilon})$ such that for their corresponding clusters $V_i \subseteq V(G)$ it holds that $\deg(v,V_i) \ge \beta |V_i|$. Then there exists a multi-embedding of a $K_{r}$ into $R_{\beta, \epsilon}$ embedding at most one vertex into $V(R_{\beta, \epsilon}) \setminus Q_v$.
\end{lemma}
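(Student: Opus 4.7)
The plan is to show that $Q_v$ contains nearly all clusters of $R_{\beta,\epsilon}$ and then to locate the desired $K_r$-multi-embedding inside the induced reduced multigraph $R_{\beta,\epsilon}[Q_v]$ by an argument analogous to Proposition~\ref{prop:upsilonexpand} but with the parameter shifted from $r$ to $r-1$; the one vertex of slack outside $Q_v$ then absorbs any degree loss that this restriction costs us.

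First I would lower-bound $|Q_v|$. Using $\delta(G)\ge(1-2/r+\mu)n$ from the hypotheses of the main theorem (implicit in the setting of this lemma), decompose $N_G(v)$ into its intersections with $V_0$, with clusters outside $Q_v$, and with clusters inside $Q_v$. The first contributes at most $\epsilon n$, the second at most $\beta m$ per cluster (so $\le \beta n$ in total), and the third at most $m$ per cluster. Rearranging gives
\[
|Q_v|\;\ge\;\left(1-\tfrac{2}{r}+\mu-\beta-\epsilon\right)k\;\ge\;\left(1-\tfrac{2}{r}+\tfrac{\mu}{2}\right)k
\]
for $\beta,\epsilon$ small with respect to $\mu$. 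Combining this with $\delta(R_{\beta,\epsilon})\ge 2(1-2/r+\mu/2)k$ from Fact~\ref{fact:min_deg_r}, and noting that at most $2$ units of multi-degree at any vertex of $Q_v$ can point to each of the at most $(2/r-\mu/2)k$ clusters outside $Q_v$, I would derive
\[
\delta\!\bigl(R_{\beta,\epsilon}[Q_v]\bigr)\;\ge\;2\!\left(1-\tfrac{2}{r-1}\right)\!|Q_v|\;+\;\Omega(\mu k),
\]
which is precisely the hypothesis of Proposition~\ref{prop:upsilonexpand} with $r$ replaced by $r-1$.

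Second, I would replay the case analysis of Lemmas~\ref{lem:double-edgeconnection}--\ref{lem:requal5} and Proposition~\ref{prop:upsilonexpand} on $R_{\beta,\epsilon}[Q_v]$ with parameter $r-1$ in place of $r$. This produces, for any fixed vertex $i\in Q_v$, a $K_r$-multi-embedding of $R_{\beta,\epsilon}[Q_v]$ containing $i$, which maps every one of the $r$ vertices of $K_r$ to a cluster in $Q_v$ --- stronger than what the statement requires.

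The main obstacle is verifying that the $\mu$-slack in the degree of $R_{\beta,\epsilon}[Q_v]$ is enough to drive the shifted argument; the calculation is tightest when $r\in\{4,5\}$, where the shifted parameter lands at or below the threshold $r'\ge 4$ of Proposition~\ref{prop:upsilonexpand} and the slack becomes marginal. In these corner cases I would redo the argument by hand in the spirit of Lemmas~\ref{lem:requal4} and~\ref{lem:requal5}, using the extra freedom of placing one vertex outside $Q_v$: whenever a partial multi-embedding's joint neighborhood collapses inside $Q_v$, I would instead extend using a cluster in $V(R_{\beta,\epsilon})\setminus Q_v$, whose existence follows from the stronger minimum multi-degree of the ambient reduced multigraph $R_{\beta,\epsilon}$ itself (rather than of its restriction to $Q_v$).
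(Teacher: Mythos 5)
Your outline starts well — the lower bound $|Q_v|\ge(1-2/r+\mu/2)k$ is exactly the paper's equation~\eqref{eq:lowerQv} — but the degree bound you use as a pivot is false for small $\mu$. Removing the $k-|Q_v|$ clusters outside $Q_v$ can cost a vertex of $Q_v$ up to two units of multi-degree per removed cluster, so the best available estimate is $\delta(R[Q_v])\ge \delta(R)-2(k-|Q_v|)\ge 2(1-4/r+\mu)k$. Writing $q=|Q_v|/k$, the quantity you would need, $\delta(R[Q_v])\ge 2(1-2/(r-1))|Q_v|$, reduces to $\tfrac{4q}{r-1}-\tfrac{4}{r}+\mu\ge 0$; at the worst case $q=1-2/r+\mu/2$ this becomes $\mu\,\tfrac{r+1}{r-1}-\tfrac{4}{r(r-1)}\ge 0$, i.e.\ $\mu\ge 4/(r(r+1))$. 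Since $\mu$ is an arbitrary positive constant, this fails for every $r$, not only $r\in\{4,5\}$. In fact the paper's own computation inside this proof shows the restriction to $Q_v$ shifts the effective degree condition to $(1-\tfrac{2}{r-2})\cdot 2|Q_v|$ — a shift by two, not one — and a clique-finding argument at that threshold yields only a $K_{r-1}$, not a $K_r$, inside $Q_v$. It is genuinely not true that a $K_r$ can always be multi-embedded entirely within $Q_v$ (for $r=4$ and $\mu$ small one can have $|Q_v|\approx k/2$ and a cluster in $Q_v$ whose multi-degree into $Q_v$ is essentially zero).

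Consequently the single vertex of slack outside $Q_v$ is not, as you treat it, a cushion for a marginal calculation; it is structurally necessary, and the paper uses it as an integral part of the count. The actual proof fixes a maximal clique $C$ of double-edges in $Q_v$ of size $\ell$, sets $S$ to be the joint neighborhood of $C$ inside $Q_v$ and $T$ the joint neighborhood outside, and double-counts $\deg(C,Q_v)$ and $\deg(C,V(R))$ to obtain $|S|>(r-2\ell-2)\tfrac{2k}{r}$ and $|S|+|T|>(r-2\ell-1)\tfrac{2k}{r}$. Since every vertex of $R$ has fewer than $\tfrac{2k}{r}$ non-neighbors, one greedily picks $r-2\ell-1$ pairwise-adjacent vertices from $S$ and then \emph{one final vertex from $S\cup T$} — that last vertex is exactly the one permitted outside $Q_v$, and without it the greedy count falls one short. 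Note also that the paper does not replay the case analysis of Lemmas~\ref{lem:highsingle}--\ref{lem:requal5}: because the lemma does not require any particular vertex of $R$ to lie in the $K_r$-multi-embedding, the maximal double-edge clique $C$ may have $\ell=1$ and the single counting argument handles all cases uniformly, which is considerably simpler than the route you propose.
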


\begin{proof}
    Note that the number of edges from $v$ to $V_0$ or any cluster not in $Q_v$ is at most $\epsilon n$ and $\beta k m \le \beta n$ respectively. The degree of $v$ is at least $(1-2/r+\mu)n$ in $G$ and choosing $\beta, \epsilon< \mu/10$ the number of edges from $v$ to clusters of $Q_v$ is at least $(1-2/r + 2\mu/3) n$. In particular since every cluster has size at most $n/k$ this means   \[|Q_v| \ge \left(1-\frac{2}{r}+\mu/2 \right)k.   \numberthis\label{eq:lowerQv}\]  
    The proof follows similar arguments as the proof of Lemma~\ref{lem:double-edgeconnection}.
    Let the largest clique with double-edges in $Q_v$ be $C$ of size $\ell$. Let $S \subseteq Q_v$ be the joint neighborhood of the vertices from this clique inside $Q_v$ and $T \subseteq V(R) \setminus Q_v$ all vertices which are in the joint neighborhood of the clique but not in $Q_v$. We want to find a $K_{r-2\ell}$ in $S\cup T$ with at most one vertex in $T$. 
    
    Because $C$ is maximal every vertex in $S$ has at most $2\ell -1$ edges to $C$ and every other vertex in $Q_v$ has at most $2\ell -2$ edges to $C$. But also every vertex in $C$ has degree greater than $ (1-\frac{2}{r})2k $. So we get two bounds for $deg(C, {Q_v})$. the sum of degrees between $C$ and $Q_v$.
	\[deg(C, {Q_v})  > 	\ell  \left( (1-2/r )2k - 2(k-|Q_v|) \right)  >  \left(1- 2 / (r-2) \right)2  \ell |Q_v|, \]
	where in the last step we use from \eqref{eq:lowerQv} that $k < \frac{r}{r-2}|Q_v|$.
    
    \begin{align*}  
	\deg(C, {Q_v}) &  \le 	\deg (C, S) + \deg(C, {Q_v\backslash S}) \\
	& \le (2\ell-1)|S|+(2\ell-2)(|Q_v|-|S|) \\
	& < |S|+(2\ell-2)|Q_v|.
	\end{align*} 
    Together we get a bound on $|S|$. Namely 
    \[ |S| > \left(1- 2\ell / (r-2) \right)2|Q_v|  > (r-2\ell-2) \frac{2k}{r} \numberthis\label{eq:lowerSv}.\]
    
	Next, we bound the size of $S\cup T$ with a similar argument. Counting the edges $\deg(C, {V(R)})$. Again, vertices in $ V(R)$ but not in  $S\cup T$ can have at most $2\ell -2$ edges to $C$.
	
    \begin{align*}  
	(1-2/r)2k\ell & \le \deg(C,{V(R)}) \\ &  = 	\deg(C, S)	+ \deg(C, T) + \deg(C, {V(R)\setminus (S\cup T)}) \\
	& \le (2\ell-1)|S|+ 2\ell |T|+(2\ell-2)(k-|S| -|T|) \\
	& < |S|+ 2|T| +(2\ell-2)k.
	\end{align*} 
    We get a bound on $|S \cup T|$. Namely $|S| + 2|T| > (1-2\ell/r)2k  = (r-2\ell) \frac{2k}{r}  $ and in particular since $|T| \le k-|Q_v| < 2k/r$ because of (\ref{eq:lowerQv}) this means 
	    \[|S| + |T| > (1-2\ell/r)2k -2k/r = (r-2\ell-1) \frac{2k}{r}.  \numberthis\label{eq:lowerSTv} \]
	Furthermore, observe that for every vertex $w \in V(R)$ we have $N(w) \ge k - 2k/r$, thus every vertex has at most $2k/r$ non-neighbors. This directly implies we can sequentially choose 
		\[ \left\lceil \frac{\ |S|\ }{\frac{2k}{r}} \right\rceil \osref{(\ref{eq:lowerSv})}\ge r-2\ell-1 \] 
	many vertices from $S$ to form a clique and still have at least one vertex from $S\cup T$ because of (\ref{eq:lowerSTv}) to form the $K_{r-2\ell}$. This together with the $K_\ell$ of double-edges gives allows for a multi-embedding of $K_r$ and concludes the proof.
\end{proof}

We now prove Lemma \ref{lem:absorber_diamondpaths}.

\begin{proof}[Proof of Lemma~\ref{lem:absorber_diamondpaths}]
    Choose two arbitrary vertices $s,t \in V(G)$ for which we want to find a $K_r$-diamond path. For $s$ and $t$, apply Lemma~\ref{lem:start} to find two multi-embeddings of $K_{r}$'s such that at most one of the vertices in $R$ has $\deg(s, V_i) < \beta |V_i|$ and $\deg(t, V_j) < \beta |V_j|$ respectively. Call these vertices $s_1$ and $t_1$ respectively. With Proposition~\ref{prop:upsilonexpand} we find a multi-embedding of at most four $K_{r+1}$'s connecting $s_1$ and $t_1$ since the second $\Upsilon_{r+1}$ neighborhoods overlap.

    This almost gives a multi-embedding of a $K_r$-diamond path connecting $s$ and $t$. It remains to deal with the multi usage of a cluster in the reduced graph. For the mapping to be a multi-embedding as in Definition \ref{def:embedding} we need that each vertex/cluster in the reduced graph has only a single vertex, edge or 2-path mapped to it. For this we partition each cluster arbitrarily into enough parts such that we can assign each vertex, edge or 2-path to a unique part. Note that as we have at most six $K_r$'s we only need to split the clusters into constantly many parts. 
    
    If we arbitrarily split each cluster of the reduced graph into $ 6r$ equal parts, then the new partition still satisfies the conditions of the Regularity Lemma because $\varepsilon$-regularity is inherent by Fact~\ref{fact:regsub} just with slightly different $\varepsilon'$ and $\beta'$. So we can have a reduced multigraph $R'$ of this new partition which is just a blowup of $R$. In particular, we can embed each isolated vertex, edge or 2-path into a separate cluster. In $R'$ the consecutive $K_r$ multi-embedding is in fact a multi-embedding of a $K_r$-diamond path of length at most seven excluding the endpoints $s$ and $t$.
 It follows by Corollary~\ref{cor:embedable_struct} that we get a $K_r$-diamond path in $G$.
\end{proof}

With Lemma \ref{lem:absorber_diamondpaths} we can find $K_r$-diamond paths from any tuple of $r$ vertices matching them to a different vertex of a $K_r$ somewhere else in the graph. This is now a $(S,t)$-absorber from Definition~\ref{def:absorber} and together with Lemma~\ref{lemma:absorbing} this is enough to find an absorber of the first kind as in Definition~\ref{def:absorber}. 


\section{Almost Spanning Structure} \label{sec:almost}
For the second part of the proof we want to show that we can cover most of the vertices with a $K_r$-tiling. Combining this with the absorber gives a $K_r$-factor.

\begin{lemma}
	For every $r\in \NN$ and $\xi, \mu > 0$, there exist $\gamma > 0$ and $n_0 \in \NN$ such that every graph $G$ on $n>n_0$ vertices with $\delta(G)\ge \left(1-\frac{2}{r}+\mu\right)n$ and $\alpha(G) \le \gamma n$, we can find a $K_r$-tiling which covers at least $(1- \xi)n$ vertices in $G$.
	\label{lem:almost}
\end{lemma}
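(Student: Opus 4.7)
The plan is to apply the Regularity Lemma to pass to the reduced multigraph $R$, find a fractional $K_r$-multi-tiling of $R$ that uses almost all of the cluster capacity, and convert this back to a near-perfect $K_r$-tiling of $G$ via iterated applications of the Embedding Lemma (Lemma~\ref{lem:embedable_struct}).

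Choose $\beta$ small compared to $\mu$, $\epsilon$ small compared to $\beta$ and $\xi$, and $\gamma$ small enough for Lemma~\ref{lem:embedable_struct} with $h=r$. Apply Lemma~\ref{lem:reg} to obtain the partition $V_0\cup V_1\cup\cdots\cup V_k$ with $|V_i|=m$ and the reduced multigraph $R=R_{\beta,\epsilon}$; by Fact~\ref{fact:min_deg_r}, $\delta(R)\ge 2(1-2/r+\mu/2)k$. The goal in $R$ is a collection of $K_r$-multi-embeddings $\psi_1,\ldots,\psi_s$ with nonnegative weights $w_1,\ldots,w_s$ satisfying $\sum_t w_t\,|\psi_t^{-1}(i)|\le 1$ for every $i\in V(R)$ and $\sum_t w_t\ge(1-\xi/2)k/r$. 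To construct this, I would argue greedily: maintain residual capacities $c_i\in[0,1]$, initially all $1$, repeatedly find a $K_r$-multi-embedding concentrated on clusters of large residual capacity, push a small increment of weight onto it, and decrement the $c_i$'s accordingly. That every cluster lies in many $K_r$-multi-embeddings follows from an argument analogous to the proof of Lemma~\ref{lem:start}, this time working purely inside $R$ and using Proposition~\ref{prop:upsilonexpand} together with Lemmas~\ref{lem:double-edgeconnection}--\ref{lem:requal5} to produce the requisite local structure. The $\mu/2$ slack in the minimum degree condition is what allows the greedy process to continue until only an $\xi/2$ fraction of cluster capacity remains uncovered.

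Given the fractional multi-tiling, set $c_t=\lfloor w_t m\rfloor$ and, for each $\psi_t$, extract $c_t$ vertex-disjoint copies of $K_r$ in $G$ realising $\psi_t$ one at a time, using Lemma~\ref{lem:embedable_struct}. After each extraction the subsets $V'_i\subseteq V_i$ shrink by at most three vertices, and the Embedding Lemma continues to apply so long as $|V'_i|\ge (2/\beta)^r\epsilon m$; hence at most $(2/\beta)^r\epsilon m$ vertices per cluster per multi-embedding are wasted. For $\epsilon$ small enough, the total waste together with $|V_0|\le\epsilon n$ is below $\xi n/2$, which combined with Step~2 yields a $K_r$-tiling covering at least $(1-\xi)n$ vertices of $G$.

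The main technical obstacle is Step~2. The multi-degree $2(1-2/r+\mu/2)k$ in $R$ is below the Hajnal--Szemer\'edi threshold $2(1-1/r)k$ for an integral $K_r$-factor even in the multigraph sense, so the near-covering must genuinely exploit multi-embeddings that reuse clusters as edges or $2$-paths---exactly the device enabled by $\alpha(G)=o(n)$ through the Embedding Lemma. Turning the local abundance of $K_{r+1}$-multi-embeddings produced by the Section~\ref{sec:absorbers} lemmas into a global almost-perfect fractional cover, without the residual capacity becoming pathologically concentrated on a small set of clusters, is where the real work lies; an LP-duality reformulation (bounding the dual by ruling out small fractional vertex covers of all $K_r$-multi-embeddings) is a plausible alternative route to the same conclusion.
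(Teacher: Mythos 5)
The high-level architecture you propose (pass to the reduced multigraph, find a fractional $K_r$-multi-tiling of almost full weight, then convert back via iterated applications of Lemma~\ref{lem:embedable_struct}) matches the paper's outline, and your last step is essentially the paper's Lemma~\ref{lem:frac_multi_to_almost}. However, you have correctly identified the genuine gap yourself: Step~2, constructing the fractional multi-tiling, is left unproved. Your suggestion to argue greedily via residual capacities, leaning on Proposition~\ref{prop:upsilonexpand} and the $\Upsilon$-neighbourhood lemmas, does not go through. Those lemmas only give \emph{local} abundance of $K_{r+1}$-multi-embeddings; they say nothing about whether residual capacity can concentrate on a small, badly connected set of clusters, which is exactly the obstruction your own last paragraph flags. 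Moreover, the multi-degree bound $2(1-2/r+\mu/2)k$ alone is not enough to drive a greedy cover in $R$ below any prescribed remainder; you need an extra structural input to keep improving.

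The paper's route to the missing step is genuinely different from anything in your sketch and uses three ideas you do not invoke. First, it never does a greedy cover in $R$ at all: it builds an auxiliary graph $\Gamma$ by blowing up each cluster of $R$ into a Bollob\'as--Erd\H{o}s $K_4$-free, triangle-free graph with sub-linear independence number (Lemma~\ref{lem:K4free}), placing a complete bipartite graph across double edges and a Bollob\'as--Erd\H{o}s $K_4$-free bipartite graph across single edges. The point of this construction is that $\Gamma$ is a \emph{simple} graph with $\delta(\Gamma)\ge(1-2/r+\mu/4)|\Gamma|$ and $\alpha(\Gamma)\le\gamma|\Gamma|$, and every $K_r\subseteq\Gamma$ collapses to a valid $K_r$-multi-embedding of $R$ (triangle-freeness inside clusters prevents mapping three vertices to one cluster; $K_4$-freeness across single edges forces a double edge whenever two clusters both receive two vertices). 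Second, to produce a near-perfect fractional $K_r$-tiling of $\Gamma$ (Lemma~\ref{lem:fracmat}), the paper takes a \emph{maximum} $K_r$-tiling, applies the Erd\H{o}s--S\'os Ramsey--Tur\'an bound (Lemma~\ref{lem:RT}) to conclude the remainder has small average degree, deduces that many remainder vertices have high degree into the tiled part, and uses the low independence number to upgrade $K_r$'s to $K_{r+1}$'s (Lemma~\ref{lem:KrKr+1} and Claim~\ref{claim:extendKr}). Third, Treglown's blow-up trick converts the resulting $\{K_r,K_{r+1}\}$-tiling back into a $K_r$-tiling of a constant blow-up, and iterating a bounded number of times yields the fractional tiling. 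Your proposal uses none of the Ramsey--Tur\'an average-degree lemma, none of the blow-up iteration, and none of the $\Gamma$-construction, and without them the step you flagged as ``where the real work lies'' remains unfilled.
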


We make use of a known result for small subgraphs in the same setting. To find a $K_{r}$ in a graph with small independence number we only need a certain average degree. The following lemma states this 

\begin{lemma}[Erd\H{o}s, S\'os \cite{erdos1970some}]
	For every $r\in \NN$ and $\mu>0$ there exist $\gamma >0 $ and $n_0 \in \NN$ such that for every graph $G$ on $n>n_0$ vertices with \textbf{average degree} $d(G)\ge \left(1-\frac{2}{r-1}+\mu\right) n$ and $\alpha(G)\le \gamma n$, then $K_r\subseteq G$.
	\label{lem:RT}
\end{lemma}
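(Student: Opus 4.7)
The plan is to prove the lemma by induction on $r$, with the base cases $r\in\{2,3\}$ handled directly and the inductive step using \emph{dependent random choice} to reduce to the case $r-2$.

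For the base cases: $K_{2}$ is just an edge, and any graph with $\alpha(G)\le\gamma n<n$ contains one. For $r=3$, if $G$ were triangle-free then $N(v)$ would be independent for every vertex $v$, giving $\deg(v)\le\alpha(G)\le\gamma n$; averaging yields $d(G)\le\gamma n$, contradicting $d(G)\ge\mu n$ as soon as $\gamma<\mu$. For the inductive step with $r\ge 4$, the strategy is to locate a set $A\subseteq V(G)$ of linear size satisfying two properties: (a) every $(r-2)$-subset $S\subseteq A$ has common neighborhood $|N_{G}(S)|>\gamma n$ in $G$, and (b) the induced subgraph $G[A]$ still satisfies the average-degree hypothesis needed to invoke the inductive hypothesis for $r-2$. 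Given such an $A$, induction yields a copy $K\cong K_{r-2}$ inside $G[A]$; by (a) its common neighborhood has more than $\gamma n$ vertices, and so by the low-$\alpha$ assumption $N_{G}(K)$ contains an edge $\{a,b\}$, whence $V(K)\cup\{a,b\}$ realises the desired $K_{r}\subseteq G$.

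To build $A$, I would sample $t$ vertices $u_{1},\dots,u_{t}\in V(G)$ uniformly and independently (with replacement), where $t=t(r,\gamma,\mu)$ is a sufficiently large constant, and set $A_{0}=\bigcap_{i=1}^{t}N_{G}(u_{i})$. A convexity computation gives
\[
 \mathbf{E}\bigl[|A_{0}|\bigr] \;=\; \sum_{v\in V(G)}\Bigl(\tfrac{\deg(v)}{n}\Bigr)^{t} \;\ge\; n\bigl(d(G)/n\bigr)^{t} \;\ge\; \bigl(1-\tfrac{2}{r-1}\bigr)^{t} n,
\]
while the expected number of ``bad'' $(r-2)$-subsets $S\subseteq A_{0}$ with $|N_{G}(S)|\le\gamma n$ is at most $\binom{n}{r-2}\gamma^{t}$. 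For $t$ large enough this second quantity is negligible compared with $\mathbf{E}[|A_{0}|]$, and a standard first-moment deletion argument produces a realisation in which $|A|=\Omega(n)$ and no bad $(r-2)$-subset survives, giving property (a). Property (b) will follow from a second-moment calculation: the expected edge count inside $A_{0}$ is $\sum_{vw\in E(G)}\bigl(|N(v)\cap N(w)|/n\bigr)^{t}$, which can be bounded below using that the low-$\alpha$ hypothesis forces many pairs $\{v,w\}$ to have large common neighborhood, and this translates into $d(G[A])$ being close to $(1-2/(r-1))|A|$. The strict inequality
\[
 \bigl(1-\tfrac{2}{r-1}\bigr)-\bigl(1-\tfrac{2}{r-3}\bigr) \;=\; \tfrac{4}{(r-1)(r-3)} \;>\; 0
\]
then leaves room to invoke the inductive hypothesis at $r-2$ inside $G[A]$, after observing that $\alpha(G[A])\le\alpha(G)\le\gamma n$ is still a sufficiently small fraction of $|A|=\Omega(n)$.

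The main obstacle is the second-moment / edge-density step: controlling $\sum_{vw\in E(G)}\bigl(|N(v)\cap N(w)|/n\bigr)^{t}$ from below requires a careful use of the low-$\alpha$ condition, which rules out graphs where too many edges have small common neighborhood. A secondary delicate point is that the greedy extension of a clique inside $A$ must remain inside $A$ (so that the inductive hypothesis can be invoked on $G[A]$); this is standard for dependent random choice but requires either a strengthened DRC variant or a slight tightening of the parameters $t$ and $\gamma$.
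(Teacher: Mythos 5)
First, a point of comparison: the paper does not prove this lemma at all --- it is imported as a black box from Erd\H{o}s and S\'os --- so there is no in-paper argument to measure yours against. Your high-level skeleton (induct in steps of two, produce a $K_{r-2}$ whose common neighbourhood has more than $\gamma n$ vertices, and use $\alpha(G)\le\gamma n$ to find an edge there) is the right way to think about Ramsey--Tur\'an for cliques, and your base cases and the final extension step are fine. However, the dependent random choice implementation has a fatal quantitative gap. The expected number of bad $(r-2)$-subsets is $\binom{n}{r-2}\gamma^{t}=\Theta\bigl(n^{r-2}\gamma^{t}\bigr)$, and since $r-2\ge 2$ this is never ``negligible compared with $\mathbf{E}[|A_{0}|]\le n$'' for any \emph{constant} $t$ and constant $\gamma$: the ratio is $\Theta(n^{r-3})(\gamma/c)^{t}$ with $c=1-\tfrac{2}{r-1}$, which tends to infinity with $n$. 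To kill the bad sets you would need $t\gtrsim (r-3)\log n/\log(1/\gamma)$, but then $\mathbf{E}[|A_{0}|]\approx c^{t}n=n^{1-\delta(\gamma)}$ is sublinear. This is not a technicality: with $|A|=o(n)$ the hypothesis $\alpha(G[A])\le\gamma'|A|$ needed for the inductive call is lost entirely, since the only available bound is $\alpha(G[A])\le\gamma n\gg\gamma'|A|$, and nothing in the construction prevents $A$ from sitting almost entirely inside one independent set of $G$.

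Second, your property (b) is not merely ``the main obstacle'' but fails as sketched: dependent random choice does not preserve relative edge density. Even in the favourable regime where every edge $vw$ has $|N(v)\cap N(w)|\ge\bigl(1-\tfrac{4}{r-1}\bigr)n$, your second-moment bound gives $\mathbf{E}[e(G[A_{0}])]\gtrsim e(G)\bigl(1-\tfrac{4}{r-1}\bigr)^{t}$ against $\mathbf{E}[|A_{0}|]^{2}\approx\bigl(1-\tfrac{2}{r-1}\bigr)^{2t}n^{2}$, and since $1-\tfrac{4}{r-1}<\bigl(1-\tfrac{2}{r-1}\bigr)^{2}$ strictly, the relative density of $G[A_{0}]$ decays geometrically in $t$ --- and $t$ must grow, by the previous point. (For $r=4,5$ matters are worse still, since edges with empty common neighbourhood are then possible.) So the slack of $\tfrac{4}{(r-1)(r-3)}$ you compute is not enough room; a naive two-step descent provably loses more density than that, which is exactly why the sharp Erd\H{o}s--S\'os constant requires a more delicate counting argument. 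If you want a self-contained proof, follow their original argument (or the treatment in the Simonovits--S\'os survey) rather than routing it through dependent random choice.
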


First we would like to show, that there exists at least a fractional almost cover of the vertices. A fractional cover is defined as follows:

\begin{definition}
    A fractional $K_r$-tiling $\mathcal{T}$ of a graph $G$ is a weight function from the set $\mathcal{S}$ of all $K_r \subseteq G$ to the interval $[0, 1]$ such that for vertices of $G$ it holds that
    \[\hspace{6em}  w_{\mathcal{T}}( v) =  \sum_{\substack{\mathcal{K}_i \in \mathcal{S}, \\ v \in\mathcal{K}_i} } w_{\mathcal{T}}( \mathcal{K}_i ) \le 1 \hspace{6em} \forall v \in G. \]
    We call $\sum_{v \in G } w_{\mathcal{T}}(v)$ the total weight of a tiling and it is a perfect fractional tiling if equality holds for every vertex.
\end{definition}

Fractional $K_r$-tilings are somehow easier to find and we will prove the following lemma later in this section.
\begin{restatable}{lemma}{fracmat}
	For every $r\in \NN$ and $\eta$, $\mu >0$ there exist $\gamma >0$ and $n_0\in \NN$ such that every graph $G$ on $n\ge n_0$ vertices with $\delta(G)\ge \left(1-\frac{2}{r}+\mu\right)n$ and $\alpha(G)< \gamma n$ has a fractional $K_r$-tiling $\mathcal{T}$ such that 
	\[|\{v\in G\colon w_{\mathcal{T}}(v)<1-\eta \}|\le \eta n.\]
	\label{lem:fracmat}
\end{restatable}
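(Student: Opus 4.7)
My plan is to apply the Regularity Lemma to $G$, find a near-perfect fractional tiling of the reduced multigraph by $K_r$-multi-embeddings, and then lift this tiling back to $G$ via the Embedding Lemma.

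First, I would invoke Lemma~\ref{lem:reg} with parameters $\epsilon, \beta$ chosen much smaller than $\mu$ and $\eta$, obtaining a partition $V(G) = V_0 \cup V_1 \cup \ldots \cup V_k$ with $|V_i| = m$ for $i \ge 1$ together with the reduced multigraph $R = R_{\beta, \epsilon}$. By Fact~\ref{fact:min_deg_r}, $R$ has minimum multigraph degree $\delta(R) \ge 2(1 - 2/r + \mu/2)k$.

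The main step is to show that $R$ admits a fractional $K_r$-multi-embedding tiling: nonnegative weights $w(\psi)$ over multi-embeddings $\psi$ of $K_r$ into $R$ satisfying $\sum_\psi w(\psi) |\psi^{-1}(i)| \le 1$ for every cluster $i$ and total weight $r \sum_\psi w(\psi) \ge (1 - \eta/2)k$. By LP duality, this is equivalent to showing that every nonnegative cluster weighting $z : V(R) \to \mathbb{R}_{\ge 0}$ with $\sum_i z_i |\psi^{-1}(i)| \ge r$ for every multi-embedding $\psi$ must satisfy $\sum_i z_i \ge (1 - \eta/2)k$. I would argue by contradiction: if $\sum_i z_i < (1 - \eta/2)k$, an averaging argument produces a positive density subset of clusters whose $z$-values are bounded away from $1$, and combining the minimum-degree condition on $R$ with the structural tools of Section~\ref{sec:absorbers} (especially Lemmas~\ref{lem:double-edgeconnection}, \ref{lem:highsingle}, and \ref{lem:start}, which produce $K_r$-multi-embeddings through prescribed clusters using double-edge cliques and $K_r$-neighborhoods) one constructs a multi-embedding $\psi$ with $\sum_i z_i |\psi^{-1}(i)| < r$, yielding the desired contradiction.

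Finally, I would lift the fractional multi-tiling to $G$. Passing to a basic feasible solution of the LP uses at most $k$ distinct multi-embeddings. For each $\psi$ with $w(\psi) > 0$, I reserve inside each involved cluster $V_i$ a subset of size approximately $w(\psi) |\psi^{-1}(i)| m$, arranged so that the reserved subsets across $\psi$ partition $V_i$; I then repeatedly invoke the Embedding Lemma (Lemma~\ref{lem:embedable_struct}) on the unused portions of these reserved subsets to extract about $w(\psi) m$ pairwise vertex-disjoint copies of $K_r$ realizing $\psi$. The Embedding Lemma keeps producing a new copy as long as each involved subset has at least $(2/\beta)^{r} \epsilon m$ unused vertices, so the total vertex loss across all $\psi$ is at most $O(\epsilon) n$. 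Assigning weight $1$ to each extracted copy yields an integer $K_r$-tiling covering all but at most $|V_0| + O(\epsilon) n + (\eta/2) n \le \eta n$ vertices, and interpreted as a fractional tiling this meets the conclusion.

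The main obstacle is the LP-duality step: proving that no efficient multi-cover of $R$ exists is essentially a Hajnal--Szemer\'edi-type statement for the reduced multigraph under the given degree condition, and will likely require careful case analysis of the double-edge structure (in particular, small values of $r$ such as $r = 4, 5$ may need dedicated arguments akin to Lemmas~\ref{lem:requal4} and~\ref{lem:requal5}).
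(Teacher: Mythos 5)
Your proposal takes a genuinely different route from the paper, and the route you chose re-exposes the hardest part of the problem without solving it. The paper's proof of Lemma~\ref{lem:fracmat} never touches the Regularity Lemma: it starts with a maximum integer $K_r$-tiling in $G$ directly, applies Lemma~\ref{lem:KrKr+1} (which says a $K_r$-tiling covering at most $(1-\eta)n$ vertices can be improved to a $\{K_r,K_{r+1}\}$-tiling covering $\rho n$ more vertices), interleaves $r$-fold blow-ups of $G$ to convert each $K_{r+1}$ into an $r\times(r+1)$ block that admits a perfect $K_r$-tiling, iterates a constant $1/\rho$ number of times, and finally converts the near-perfect integer tiling of the blown-up graph into a fractional tiling of $G$ by dividing each weight by the blow-up factor. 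The Regularity Lemma, the reduced multigraph, and the auxiliary graph $\Gamma$ all live in Lemma~\ref{lem:tilingtransferlemma} and Lemma~\ref{lem:frac_multi_to_almost}, which combine with Lemma~\ref{lem:fracmat} to prove Lemma~\ref{lem:almost}; they are not used inside Lemma~\ref{lem:fracmat} itself. One reason for this split is that the reduced multigraph $R$ does not inherit any useful bound on its independence number, so the small-$\alpha$ hypothesis cannot be exploited there; the paper instead routes through $\Gamma$, which by construction via Lemma~\ref{lem:K4free} does retain $\alpha(\Gamma)\le\gamma|\Gamma|$, and then applies Lemma~\ref{lem:fracmat} to $\Gamma$ directly.

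The genuine gap is the step you label the ``main step'' and then acknowledge as the ``main obstacle'': you assert that $R$ admits a near-perfect fractional $K_r$-multi-embedding tiling, frame this via LP duality, and sketch a contradiction argument citing Lemmas~\ref{lem:double-edgeconnection}, \ref{lem:highsingle}, \ref{lem:start}. But those lemmas produce a single multi-embedding through one prescribed cluster (or through a $K_r$-neighborhood), and nothing in your sketch explains how to manufacture, from a hypothetical dual weighting $z$ with $\sum_i z_i<(1-\eta/2)k$, a specific multi-embedding $\psi$ with $\sum_i z_i|\psi^{-1}(i)|<r$. This is essentially a fractional Hajnal--Szemer\'edi-type theorem for the multigraph $R$, and it is not a small technicality: it would be a new theorem requiring its own case analysis (you note $r=4,5$ would need dedicated arguments). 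The paper avoids having to prove any such statement about $R$ precisely by working in $G$ (and in $\Gamma$) where the small independence number is available. The final lifting step of your proposal is sound in spirit (it is essentially Lemma~\ref{lem:frac_multi_to_almost}), but without the main step being proved, the proposal does not constitute a proof of the lemma.
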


Observe that the weight of this tiling is at least $(1-2\eta) n$.
We would like to transform the fractional into an actual tiling. We construct a fractional tiling in the reduced multigraph first, then transfer it to the original graph greedily. We will slightly abuse notation for the fractional tiling to extend the definition to the reduced multigraph. By a fractional tiling with $K_r$-embeddable structures we mean we assign the weights to all possible multi-embeddings of $K_r$ onto the reduced multigraph and require that the for every vertex all multi-embeddings mapping to that vertex have a total weight of at most one, counting multiplicity.

\begin{lemma}
	For every $r \in \NN$ and $\eta, \beta>0$ there exist $\epsilon, \gamma >0$ and $n_0\in \NN$ such that for every graph $G$ on $n\ge n_0$ vertices, if a reduced multigraph $R_{\beta, \epsilon}$ of $G$ has a fractional tiling with $K_r$-embeddable structures of total weight at least $(1-\eta) k$, then $G$ has an $K_r$-factor that covers all but $(1-2\eta)n$ vertices.
	\label{lem:frac_multi_to_almost}
\end{lemma}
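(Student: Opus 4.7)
The plan is to realize the fractional $K_r$-tiling in $R_{\beta,\epsilon}$ as actual disjoint copies of $K_r$ in $G$, extracting each copy by an invocation of Lemma~\ref{lem:embedable_struct}. First, the fractional tiling is a feasible solution to an LP with only $k$ capacity constraints (one per cluster), so by passing to a basic feasible solution with the same or larger objective value we may assume at most $s\le k$ multi-embeddings $\psi_1,\ldots,\psi_s$ carry positive weight $w_j$. For each $j$ and each cluster $i$ write $c_j(i):=|\psi_j^{-1}(i)|\in\{0,1,2\}$, where the value $3$ is excluded because three vertices of $K_r$ induce a triangle, not a $2$-path, violating Property~\eqref{def:emb:1}. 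Put $W_i:=\sum_j c_j(i)w_j$; the feasibility reads $W_i\le 1$ for every $i$, and by hypothesis $\sum_i W_i\ge (1-\eta)k$, equivalently $\sum_j w_j\ge (1-\eta)k/r$.

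Second, partition each cluster's budget. For every cluster $i$ and every $j$ with $c_j(i)>0$, reserve pairwise disjoint subsets $U_i^{(j)}\subseteq V_i$ of size $\lfloor c_j(i)w_jm\rfloor$; this is possible because $\sum_j c_j(i)w_jm=W_im\le m=|V_i|$. Process the embeddings sequentially. For fixed $\psi_j$, repeatedly apply Lemma~\ref{lem:embedable_struct} with $H=K_r$, $f=\psi_j$, and sets $V'_i$ equal to the currently unused portion of $U_i^{(j)}$; each application returns a $K_r$-copy in $G$ respecting $\psi_j$, whose $c_j(i)$ vertices in cluster $i$ are then removed from $U_i^{(j)}$. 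Stop when some $U_i^{(j)}$ falls below the Embedding Lemma threshold $(2/\beta)^r\epsilon m$. Since this threshold is reached only once $c_j(i)(w_jm-N)<(2/\beta)^r\epsilon m$ for some $i$, where $N$ is the current number of copies, we obtain $N\ge w_jm-(2/\beta)^r\epsilon m$ copies of $K_r$ corresponding to $\psi_j$.

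Third, count uncovered vertices. We miss at most $|V_0|\le\epsilon n$ in the garbage cluster, at most $\sum_i(1-W_i)m\le\eta km\le\eta n$ vertices never placed in any budget $U_i^{(j)}$, and at most $\sum_j r(2/\beta)^r\epsilon m\le sr(2/\beta)^r\epsilon m$ vertices left unused inside the budgets. Using $s\le k$ and $km\le n$, the total is at most
\[|V_0|+\eta km+sr(2/\beta)^r\epsilon m\le \eta n+\bigl(1+r(2/\beta)^r\bigr)\epsilon n,\]
which is at most $2\eta n$ once $\epsilon\le \eta/(1+r(2/\beta)^r)$. The parameter $\gamma$ and the threshold $n_0$ are taken from Lemma~\ref{lem:embedable_struct} applied with $H=K_r$ and the chosen $\beta,\epsilon$; in particular $\gamma$ depends only on $r$ and $\beta$, while all of $\epsilon,\gamma,n_0$ remain constants depending only on $r,\eta,\beta$. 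The only delicate issue is the final bookkeeping: without the basic-feasible-solution reduction the support size $s$ could be as large as $k^r$ and the accumulated error from the Embedding Lemma would scale like $M(\epsilon)^{r-1}\epsilon n$, which cannot be made small because $M(\epsilon)$ grows faster than any polynomial in $1/\epsilon$. Keeping $s\le k$ collapses this to $O(\epsilon n)$, so the choice of $\epsilon$ depends only on $r,\eta,\beta$ and the argument closes cleanly.
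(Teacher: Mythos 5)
Your argument is correct but takes a genuinely different route from the paper, and it introduces machinery that the paper's approach makes unnecessary. The paper handles the conversion by a single global rescaling: replace every weight $w_\mathcal{T}(\mathcal{K})$ by $(1-(2/\beta)^r\epsilon)\,w_\mathcal{T}(\mathcal{K})$, and then greedily extract $\lfloor w_{\mathcal{T}'}(\mathcal{K})\,m\rfloor$ copies of each embeddable structure via Lemma~\ref{lem:embedable_struct}, deleting the used vertices from the clusters as you go. Because all weights were scaled down, the total amount ever removed from a cluster is at most $(1-(2/\beta)^r\epsilon)m$, so at every intermediate step each cluster still has at least $(2/\beta)^r\epsilon m$ vertices available, which is exactly what the Embedding Lemma needs. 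The slack lost this way is $(2/\beta)^r\epsilon m$ \emph{per cluster}, totalling $(2/\beta)^r\epsilon n$, and the only per-embedding loss is the $\le 1$ copy rounding error, a constant overall since $k\le M(\epsilon)$; the support size of $\mathcal{T}$ never enters the accounting.

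Your version instead allocates disjoint per-embedding budgets $U_i^{(j)}$ inside each cluster and runs the Embedding Lemma separately on each budget; this costs $(2/\beta)^r\epsilon m$ vertices \emph{per embedding}, so you genuinely need the support size to be controlled, and your LP basic-feasible-solution reduction to $s\le k$ supported embeddings is a correct and necessary fix for that. (Your worry that otherwise the waste scales like $M(\epsilon)^{r-1}\epsilon n$, which cannot be driven to zero in $\epsilon$, is accurate \emph{for your allocation scheme}, but it is not a problem the paper faces.) The BFS reduction is sound: the capacity polytope $\{\,w\ge 0:\sum_j c_j(i)w_j\le 1\ \forall i\,\}$ has only $k$ nontrivial constraints, so a vertex maximizing the total weight has at most $k$ nonzero coordinates. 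Both proofs close, yours with a bit more LP bookkeeping; the paper's rescaling avoids that entirely and is the cleaner of the two.
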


\begin{proof}
    Set $\epsilon$ and $\gamma$ small enough for Lemma~\ref{lem:embedable_struct} and such that $(2/\beta)^r\epsilon \le \eta/2$. 
	The first step is to rescale the tiling. Let $\mathcal{T}$ be the fractional tiling of $R$ as given by the statement. Construct $\mathcal{T}'$ by scaling every $K_r$-embeddable structure with a factor of $(1-(2/\beta)^r\varepsilon)$ i.e.\ for any $K_r$-multi-embedding $\mathcal{K}$ we have $w_{\mathcal{T}'}(\mathcal{K})=(1-(2/\beta)^r\varepsilon)w_{\mathcal{T}}(\mathcal{K})$. We construct the $K_r$-tiling in $G$ by greedily taking $w_{\mathcal{T}'}(\mathcal{K})|V_i|$ many $K_r$ given by Lemma~\ref{lem:embedable_struct} and remove them from $G$. Note that, because of the rescaling, the sum of the weights of all $K_r$-embeddable structures touching one vertex is at most $(1-(2/\beta)^r\varepsilon)$. Thus, in every step of the greedy removal we have at least $(2/\beta)^r\varepsilon |V_i|$ vertices left which ensures that we can always apply Lemma~\ref{lem:embedable_struct}. Even after rescaling, $\mathcal{T}'$ has total weight at least $(1-(2/\beta)^r\varepsilon - \eta) k$ and $|V_0|$ has at most $\epsilon n$ many vertices. 
	So the greedy $K_r$-tiling of $G$ covers at least a $(1-((2/\beta)^r\epsilon + \eta + \epsilon)) \ge (1-2\eta)$ fraction of the vertices which concludes the proof.
\end{proof}

For our proof, we need triangle free graphs with low independence number that we can connect with relatively high density without creating a copy of $K_4$. A construction by Bollob\'as and Erd\H{o}s shows that these graphs exist. We state their results in a slightly different way, but it directly follows from their construction.
\begin{lemma}[\cite{bollobas1976ramsey}]
\label{lem:K4free}
For $\zeta,\gamma>0$ there is a $n_0\in \NN$ such that for $n\ge n_0$ there is a graph $G$ on $2n$ vertices with a split into $V_1, V_2$ has the following properties.
\begin{enumerate}
  \item $|V_1|=|V_2|=n$,
  \item $G[V_1]$ is isomorphic to $G[V_2]$ and they are triangle free,
  \item $G$ is $K_4$ free,
  \item $G[V_1,V_2]$ has density at least $1/2-\zeta$,
  \item $\alpha(G)\le \gamma n$
  
\end{enumerate}
\end{lemma}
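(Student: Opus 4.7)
The plan is to directly invoke the Bollob\'as--Erd\H{o}s construction from~\cite{bollobas1976ramsey}; only a cosmetic restatement is needed to extract the balanced bipartite structure demanded here. Fix a sufficiently large dimension $d = d(\zeta,\gamma)$ and take a set $P$ of $n$ points distributed nearly uniformly on the unit sphere $S^{d-1}$ (for instance via a fine $\delta$-net, or by sampling and pruning). Form $V_1$ and $V_2$ as two disjoint labelled copies of $P$. Within each $V_i$, join two vertices whenever the corresponding unit vectors are at Euclidean distance exceeding a threshold $\tau_1 \in (\sqrt{3}, 2)$; between $V_1$ and $V_2$, join vertices whenever the corresponding vectors are at distance exceeding a smaller threshold $\tau_2$ chosen just above $\sqrt{2}$. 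The construction is symmetric in the two copies, so $G[V_1] \cong G[V_2]$ automatically, giving property~(1) and the isomorphism in~(2).

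I would then verify the remaining properties in turn. For the triangle-freeness in~(2), three unit vectors pairwise at distance $>\tau_1>\sqrt{3}$ would have a Gram matrix with ones on the diagonal and off-diagonal entries $<-1/2$, which is not positive semi-definite, so each $G[V_i]$ is triangle-free. For $K_4$-freeness in~(3), any would-be $K_4$ must split as $2+2$ across $V_1,V_2$ (a $3+1$ split would force a triangle inside some $V_i$), and one calibrates $(\tau_1,\tau_2)$ so that four unit vectors realising the required pattern of within- and cross-part inner-product constraints again violate positive semi-definiteness of the associated Gram matrix. For~(4), concentration of the uniform measure on $S^{d-1}$ ensures that the fraction of pairs with distance exceeding $\tau_2$ tends to $1$ as $d \to \infty$; with $d$ large enough, this fraction exceeds $1/2-\zeta$, so the bipartite density is as required. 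For~(5), the standard Bollob\'as--Erd\H{o}s estimate from spherical-cap volumes gives $\alpha(G[V_i]) \le (\gamma/2) n$, and the bipartite density then forces $\alpha(G) \le \gamma n$ overall.

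The main obstacle is calibrating the two thresholds $\tau_1$ and $\tau_2$ simultaneously so that triangle-freeness within parts, global $K_4$-freeness across parts, and bipartite density $\ge 1/2 - \zeta$ are all compatible. This is precisely the geometric tradeoff optimised in the original Bollob\'as--Erd\H{o}s paper, where extremality was the goal; for the present lemma we only need feasibility, so any admissible pair $(\tau_1,\tau_2)$ suffices. The existence of the underlying point set $P$ with the required independence-number bound is itself the content of the original construction and is invoked rather than re-derived.
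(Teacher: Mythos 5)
Your plan --- invoke the Bollob\'as--Erd\H{o}s construction --- matches the paper exactly; the paper gives no proof of this lemma and simply states that it ``directly follows from their construction,'' so citing and lightly restating the construction is the intended route. The trouble is in the details you supply beyond the citation, which contain two concrete errors. First, you set cross-adjacency to be ``distance exceeding $\tau_2$ chosen just above $\sqrt 2$'' and then assert that concentration of measure makes ``the fraction of pairs with distance exceeding $\tau_2$ tend to $1$ as $d\to\infty$.'' This is backwards: on $S^{d-1}$ with $d\to\infty$ the pairwise distance concentrates \emph{around} $\sqrt 2$, so for any fixed $\delta>0$ the fraction of pairs at distance $>\sqrt 2+\delta$ tends to $0$, not $1$; by symmetry exactly a $1/2$-fraction exceeds $\sqrt 2$ itself. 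To obtain cross-density $\ge 1/2-\zeta$ you must threshold at $\sqrt 2$ (equivalently, join iff the inner product has a fixed sign), not strictly above it.

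Second, taking $\tau_1$ anywhere in $(\sqrt 3,2)$ is not sufficient for $K_4$-freeness, so the ``any admissible pair $(\tau_1,\tau_2)$ suffices'' claim fails over the range you state. With $\tau_1$ near $\sqrt 3$ (within-part inner products just below $-1/2$) and $\tau_2$ near $\sqrt 2$ (cross inner products near $0$), the Gram matrix of the forbidden $2{+}2$ pattern,
\[
\begin{pmatrix} 1 & -\tfrac12 & 0 & 0 \\ -\tfrac12 & 1 & 0 & 0 \\ 0 & 0 & 1 & -\tfrac12 \\ 0 & 0 & -\tfrac12 & 1 \end{pmatrix},
\]
is positive definite, so four unit vectors realising the pattern exist and $G$ contains a $K_4$. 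The Bollob\'as--Erd\H{o}s argument genuinely needs $\tau_1$ close to $2$ (within-part edges are nearly antipodal pairs), so that $u_2\approx -u_1$ forces $\langle u_1,v\rangle$ and $\langle u_2,v\rangle$ to have opposite signs and hence rules out both cross-edges to a common $v$; this calibration is essential rather than a free parameter.
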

\ \\
The next lemma connects almost tilings and fractional tilings. In order to find an almost tiling in a graph $G$ we apply the Regularity Lemma and need a fractional tiling in the reduced graph. We make use of a second auxiliary graph $\Gamma$, which is similar to a blow-up of the reduced graph. 
\begin{lemma}
	For every $r \in \NN$ and $\mu, \eta >0$ there exist $\beta, \epsilon, \gamma >0$ and $n_0\in \NN$ such that for every graph $G$ on $n\ge n_0$ vertices with minimum degree $\delta(G)\ge \left(1-\frac{2}{r}+\mu\right)n $ and $\alpha(G)\le \gamma n$ there is a graph $\Gamma $ with $\delta(\Gamma )\ge \left(1-\frac{2}{r}+\frac{\mu}{4} \right)|\Gamma | $ and $\alpha(\Gamma )\le \gamma |\Gamma |$ such that the following holds. 
	
	If $\Gamma $ has a fractional $K_r$-tiling with weight at least $(1-\eta)|\Gamma |$, then $G$ has a $K_r$-tiling covering at least $(1-2 \eta)n$ vertices.
	\label{lem:tilingtransferlemma}
\end{lemma}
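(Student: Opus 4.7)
The plan is to construct $\Gamma$ as a blow-up of the reduced multigraph of $G$. First I apply the Regularity Lemma (Lemma~\ref{lem:reg}) with $\epsilon,\beta$ chosen sufficiently small in terms of $\mu$ and $\eta$ to obtain a partition $V_0,V_1,\ldots,V_k$ and reduced multigraph $R=R_{\beta,\epsilon}$; by Fact~\ref{fact:min_deg_r}, $\deg_R(i)\ge 2(1-2/r+\mu/2)k$ for every $i\in V(R)$, counting double-edges twice. Fix a large constant $t$, replace each vertex $i\in V(R)$ by a fresh set $W_i$ of size $t$ (so $|\Gamma|=kt$), and equip each $W_i$ with an isomorphic copy of a single fixed triangle-free graph $T$ on $t$ vertices with $\alpha(T)\le\gamma t$, supplied by Lemma~\ref{lem:K4free}. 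Between $W_i$ and $W_j$ I put a complete bipartite graph when $(i,j)$ is a double-edge pair in $R$, and an isomorphic copy of the bipartite structure from Lemma~\ref{lem:K4free} (with parameter $\zeta$) when $(i,j)$ is a single-edge pair, so that $\Gamma[W_i\cup W_j]$ is $K_4$-free and the cross-density is at least $1/2-\zeta$.

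Next I verify the two stated properties of $\Gamma$. An independent set in $\Gamma$ intersects each $W_i$ in at most $\alpha(T)\le\gamma t$ vertices, so $\alpha(\Gamma)\le\gamma|\Gamma|$. For the minimum degree, a vertex $v\in W_i$ with $a$ double-edge and $b$ single-edge neighbors in $R$ has $\deg_\Gamma(v)\ge at+b(1/2-\zeta)t=(a+b/2)t-\zeta bt$, which by $2a+b\ge 2(1-2/r+\mu/2)k$ is at least $(1-2/r+\mu/2-\zeta)|\Gamma|$; for sufficiently small $\zeta,\beta,\epsilon$ this exceeds $(1-2/r+\mu/4)|\Gamma|$. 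Moreover, every $K_r\subseteq\Gamma$ projects to a $K_r$-multi-embedding of $R$ via $f(v)=i$ for $v\in W_i$: triangle-freeness of each $W_i$ together with $K_r$ being a clique forces $|f^{-1}(i)|\le 2$, matching Property~(\ref{def:emb:1}) of Definition~\ref{def:embedding}; $K_4$-freeness of $\Gamma[W_i\cup W_j]$ on single-edge pairs guarantees that two fibers of size $2$ must lie on a double-edge pair, giving Property~(\ref{def:emb:3}); the remaining properties are immediate since edges only exist between clusters adjacent in $R$ and each fiber has at most two vertices.

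Given a fractional $K_r$-tiling $\mathcal{T}_\Gamma$ of $\Gamma$ with total weight at least $(1-\eta)|\Gamma|$, I construct a fractional tiling of $R$ with $K_r$-embeddable structures by setting $w_R(\mathcal{K}):=\frac{1}{t}\sum_{K:\,f(K)=\mathcal{K}}w_{\mathcal{T}_\Gamma}(K)$ for each $K_r$-multi-embedding $\mathcal{K}$ of $R$. The per-cluster constraint follows from $\sum_{\mathcal{K}}|\mathcal{K}^{-1}(i)|\,w_R(\mathcal{K})=\frac{1}{t}\sum_{v\in W_i}w_{\mathcal{T}_\Gamma}(v)\le 1$, and the total weight is $\frac{1}{t}\sum_{v\in V(\Gamma)}w_{\mathcal{T}_\Gamma}(v)\ge(1-\eta)k$; Lemma~\ref{lem:frac_multi_to_almost} then supplies a $K_r$-tiling of $G$ covering at least $(1-2\eta)n$ vertices, as required. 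The main technical obstacle is assembling the single-edge connectors consistently: every $W_i$ carries one fixed copy of $T$, yet each single-edge pair $(i,j)$ must realize a copy of Lemma~\ref{lem:K4free}'s construction so that $\Gamma[W_i\cup W_j]$ is $K_4$-free. This is arranged by taking a single Bollob\'as--Erd\H{o}s graph $B$ with both parts isomorphic to $T$, fixing isomorphisms $W_i\cong T$ once and for all, and installing an isomorphic copy of $B$'s cross-pair bipartite graph on each single-edge pair; the $K_4$-freeness of $B$ then transfers to each $\Gamma[W_i\cup W_j]$.
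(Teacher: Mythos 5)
Your construction of $\Gamma$, the verification of its minimum degree and independence number, the projection of cliques in $\Gamma$ to $K_r$-multi-embeddings of the reduced multigraph, and the transfer of the fractional tiling to $R$ via averaging before invoking Lemma~\ref{lem:frac_multi_to_almost} all coincide with the paper's own argument. The one point you flag as the ``main technical obstacle''---installing a consistent Bollob\'as--Erd\H{o}s cross-pair graph between clusters sharing a fixed triangle-free base graph---is likewise the paper's resolution, so this is the same proof, written out in slightly more detail on the multi-embedding verification and the per-cluster weight bookkeeping.
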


\begin{proof}
    Choose $\beta$, $\varepsilon$ and $\gamma$ small enough such that Lemma~\ref{lem:embedable_struct} and Lemma~\ref{lem:frac_multi_to_almost} are satisfied and smaller than $\mu/10$. Apply the Regularity Lemma (Lemma \ref{lem:reg}) to $G$ with $\beta$ and $\varepsilon$. Let $V_0\cup V_1\cup\ldots\cup V_k$ be the regular partition resulting from the Regularity Lemma and let $R_{\beta, \epsilon}$ be the reduced multigraph of this partition. 
    Let $y_1$ be a constant that is larger than $n_0$ from Lemma \ref{lem:K4free} with $\gamma_{\ref{lem:K4free}} = \gamma$ and $\zeta_{\ref{lem:K4free}}=\mu/8$.
    Construct $\Gamma$ by taking $y_0 = k\cdot y_1$ vertices and split $V(\Gamma)$ into $W_1,\ldots,W_k$ each of size $y_1$ where we associate $W_i$ with $V_i$ from the regular partition. On every vertex set $W_i$ we put a triangle-free graph from Lemma~\ref{lem:K4free}. Then add a complete bipartite graph between two clusters $W_i$ and $W_j$ if $i$ and $j$ are connected by a double-edge in $R$. Add a $K_4$-free construction given by Lemma \ref{lem:K4free} between $W_i$ and $W_j$ if $i$ and $j$ are connected by a single edge and the empty graph otherwise. Note that as the graphs inside the clusters are all isomorphic, we are guaranteed that the $K_4$-free graph construction of Lemma \ref{lem:K4free} is possible between any two clusters.

    We consider the minimum degree of $\Gamma $. As $G$ has minimum degree $\delta(G) \ge (1-2/r+\mu)n$, using Fact~\ref{fact:min_deg_r} we get $\delta(R_{\beta,\epsilon})\ge 2\left( (1-2/r+\mu/2) \right) k$ which finally means in $\Gamma $ every edge from a cluster-vertex $i$ in $R_{\beta, \epsilon}$ contributes to at least $(1/2 - \zeta)y_1 = (1/2 - \zeta)|\Gamma| /k$ many edges for a vertex in the corresponding set $W_i$ of $\Gamma$. Thus $\Gamma$ has minimum degree 
    \[\delta(\Gamma) \ge (1-2/r+\mu/2 - 2\zeta) |\Gamma = (1-2/r+\mu/4) |\Gamma |\] 
    where $\zeta=\mu/8$ as we chose for Lemma~\ref{lem:K4free}.
    
    The important observation now is that every $K_r$ in $\Gamma$ corresponds to a multi-embedding of $K_r$ in $R$. This is an easy consequence of the construction of $\Gamma$. For every $K_r$ in $\Gamma$ take the mapping which maps to the vertex $i$ if the vertex of $K_r$ lies in the set $W_i$ in $\Gamma$. We never embed three vertices into a vertex of $R$ because all $W_i$'s are triangle free and if there are two clusters $W_i$ and $W_j$ into which we embed two vertices each, these vertices form a $K_4$ which means in $R$, $i$ and $j$ must be connected by a double-edge.
    By construction, the largest independent set of every cluster $W_i$ of $\Gamma$ is at most $\gamma |W_i|$ so $\alpha(\Gamma) \le \gamma |\Gamma|$. Then by the assumption of the lemma we have a fractional $K_r$-tiling $\mathcal{T}$ of $\Gamma$. We convert the fractional $K_r$-tiling of $\Gamma $ into a fractional $K_r$-tiling $\mathcal{T}'$ of $R$ by applying the mapping from $K_r$'s to $K_r$-multi-embeddings of $R$. So, for every multi-embedding $\mathcal{K}$ of a $K_r$ into $R$ we can define the set $L_{\mathcal{K}}$ to be the set of all $K_r$ in $\Gamma $ such that the multi-embedding $\mathcal{K}$ maps to the same partitions $V_i$ corresponding to $W_i$ in $\Gamma $. Then 
    
    \[ w_{\mathcal{T}'}(\mathcal{K}) \ge \sum_{K \in L_{\mathcal{K}} } \frac{ w_\mathcal{T}( K )}{y_1}  . \]
    
    So the total weight of $\mathcal{T}'$ must be at least $(1-\eta ) k$ in $R$. Then by Lemma~\ref{lem:frac_multi_to_almost} we can convert the fractional tiling into an almost cover of $G$ that covers at least $(1-2\eta)n$ vertices.
\end{proof}

For the proof of Lemma~\ref{lem:fracmat} we need the following lemma which gives us a stepwise improvement of any tiling we have as long as we do not cover a $(1-\eta)$ fraction of the vertices yet. Here a $\{K_r, K_{r+1}\}$-tiling is a disjoint union of $K_r$'s and $K_{r+1}$'s as subgraph. 

\begin{lemma}
	For every $r\in \NN$ and $\eta, \mu >0$ there exist $\rho$, $\gamma>0$ and $n_0\in \NN$ such that every graph $G$ on $n\ge n_0$ vertices with $\delta(G)\ge \left(1-\frac{2}{r}+\mu\right)n$ and $\alpha(G)< \gamma n$ has the following property: 
	
	Let $\mathcal{T}$ be a maximum $K_r$-tiling in $G$ with $|V(\mathcal{T})|\le (1-\eta)n$. Then there is a $\{K_r,K_{r+1}\}$-tiling which covers at least $|V(\mathcal{T})|+\rho n$ vertices.
	\label{lem:KrKr+1}
\end{lemma}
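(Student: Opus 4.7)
Proof proposal. Let $U := V(G)\setminus V(\mathcal{T})$, so $|U|\ge \eta n$, and by maximality of $\mathcal{T}$ the induced subgraph $G[U]$ contains no copy of $K_r$. Applying the Erd\H{o}s--S\'os bound (Lemma~\ref{lem:RT}) to $G[U]$ with $\gamma$ chosen sufficiently small relative to $\eta$, its average degree is less than $\left(1-\frac{2}{r-1}+\frac{\mu}{10}\right)|U|$. Thus most $u\in U$ send nearly all of their $\delta(G)$-many edges into $V(\mathcal{T})$.

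The plan is to produce $\rho n$ pairwise vertex-disjoint \emph{augmenting swaps}, each being a $K_{r+1}$ in $G$ of the form $K\cup\{u\}$ with $K\in\mathcal{T}$ and $u\in U$ adjacent to every vertex of $K$. Replacing such a $K$ by $K\cup\{u\}$ upgrades a $K_r$ to a $K_{r+1}$ and covers one additional vertex, so $\rho n$ disjoint swaps give the required $\{K_r,K_{r+1}\}$-tiling of size $|V(\mathcal{T})|+\rho n$.

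To locate the swaps I apply the Regularity Lemma with $\beta,\varepsilon\ll \mu,\eta$, obtaining the reduced multigraph $R$ with $\delta(R)\ge 2(1-\tfrac{2}{r}+\tfrac{\mu}{2})k$ by Fact~\ref{fact:min_deg_r}. Proposition~\ref{prop:upsilonexpand} gives $|\Upsilon^2_{r+1}(V_i)|\ge k/2$ for every cluster $V_i$, so $K_{r+1}$-multi-embeddings through any cluster are abundant. Since $|U|\ge \eta n$, a positive fraction of clusters $V_{i^*}$ each contain at least $(\eta/2)m$ vertices of $U$. For any such $V_{i^*}$, pigeonhole on the tiles of $\mathcal{T}$ — each distributed across at most $r$ clusters — locates $\Omega(n)$ tiles $K$ whose vertex set lies in a common $r$-tuple $(V_{j_1},\ldots,V_{j_r})$ of clusters inside $\Upsilon_{r+1}(V_{i^*})$ supporting, together with $V_{i^*}$, a $K_{r+1}$-multi-embedding. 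For each such tile $K$ I invoke the Embedding Lemma with $V'_{i^*}\subseteq V_{i^*}\cap U$ and $V'_{j_s}= N_G(w_s)\cap V_{j_s}$, where $w_s=K\cap V_{j_s}$: this extracts a $K_{r+1}$ in $G$ with one vertex $u\in U$ and the other $r$ vertices in the common neighborhood of the tile vertices inside the appropriate clusters. A short internal exchange within each cluster, using that $\alpha(G[V_{j_s}])<\gamma n$ provides plenty of $2$-paths, re-routes the extracted $K_{r+1}$ so that its $r$ non-$U$ vertices become precisely $w_1,\ldots,w_r$, giving the required swap $K\cup\{u\}$.

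The hardest step is this last coordinated re-identification together with enforcing pairwise disjointness over $\rho n$ swaps. I would proceed greedily, extracting one swap at a time and removing its $r+1$ used vertices from both $\mathcal{T}$ and $U$. Since $\beta,\varepsilon,\gamma$ are chosen small and the regularity partition, the minimum-degree condition, the low-independence property, and the structural guarantees of Proposition~\ref{prop:upsilonexpand} and the Embedding Lemma are all robust under removing $O(\rho n)$ vertices, the greedy procedure runs to completion for $\rho=\rho(\mu,\eta,r)>0$ sufficiently small, producing the required $\{K_r,K_{r+1}\}$-tiling.
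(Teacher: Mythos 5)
Your opening steps match the paper: you correctly identify that $G[U]$ is $K_r$-free by maximality, invoke Lemma~\ref{lem:RT} to bound the average degree inside $U$, and conclude that many vertices of $U$ send most of their edges to $V(\mathcal{T})$. After that the approaches diverge, and your plan has two genuine gaps.

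First, the overall goal is too restrictive. You want every swap to be of the form $K\cup\{u\}$ with $u\in U$ adjacent to \emph{all} $r$ vertices of a tile $K\in\mathcal{T}$. There is no reason such a $u$ must exist: it is entirely consistent with the hypotheses that every $u\in U$ has at most $r-1$ neighbors in every tile. The paper's Claim~\ref{claim:extendKr} confronts precisely this case, and this is where the low independence number enters in an essential way: after locating a tile $K$ to which many vertices of $U$ send exactly $r-1$ edges, one pigeonholes these vertices by which vertex of $K$ they miss, finds one class of size at least $(\mu/2r)|\mathcal{R}''|\ge\gamma n$, and uses $\alpha(G)<\gamma n$ to extract an \emph{edge} $e$ inside that class. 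Deleting the commonly-missed vertex from $K$ and adding both endpoints of $e$ gives a $K_{r+1}$ and a net gain of one covered vertex. Your proposal never uses the independence number for this purpose and has no mechanism for the ``nobody extends $K$'' case.

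Second, the re-identification step you flag as hardest does not go through. The Embedding Lemma produces a $K_{r+1}$ whose non-$U$ vertices are \emph{some} $x_s\in N_G(w_s)\cap V_{j_s}$, not the tile vertices $w_s$ themselves; there is no reason $u$ is adjacent to $w_s$ just because it is adjacent to $x_s$, and the proposed ``internal exchange'' via $2$-paths in $V_{j_s}$ does not create those adjacencies. Separately, the whole Regularity/Embedding-Lemma/$\Upsilon$-neighborhood apparatus is unnecessary overhead for this lemma: the paper's proof is an elementary double-counting argument (bound $\deg(\mathcal{R},T)$, extract $\mathcal{R}'$ of vertices with high degree to $T$, build the bipartite auxiliary graph of $K_r$'s vs.\ near-complete $U$-vertices, pigeonhole, use $\alpha(G)<\gamma n$ to find an edge, swap, iterate greedily) that never touches the reduced graph. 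I'd recommend abandoning the regularity route here and redoing the argument along those combinatorial lines.
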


\begin{proof}
	Let $\mathcal{R}= V(G) \setminus V(\mathcal{T})$ be the set of all uncovered vertices in $G$. By Lemma~\ref{lem:RT} we know that the average degree inside $\mathcal{R}$ is less than $\left(1-2/(r-1) + \mu \right)|\mathcal{R}|$ as else there would be a $K_r$ inside $\mathcal{R}$ that we could add to $\mathcal{T}$ contradicting the maximality of $\mathcal{T}$. 
	We show that this implies that we can extend at least $\rho n$ of the $K_r$'s in $\mathcal{T}$ to $K_{r+1}$ where $\rho$ is some constant to be chosen later. 

	Let $T=V(\mathcal{T})$ and we are guaranteed that $|T|\ge \mu n$ as otherwise every vertex in $\mathcal{R}$ would have $\deg(v,\mathcal{R})\ge (1-2/r+\mu)n-\mu n>(1-2/(r-1))|\mathcal{R}|$ contradicting our upper bound on the average degree. Moreover, as every vertex in $\mathcal{R}$ has degree at least $(1-2/r+\mu)n$ and inside $\mathcal{R}$ we have an average degree less than $(1-2/(r-1))|\mathcal{R}|$ we know that the edges in between, $\deg(\mathcal{R}, T)$, are at least
	
	\[ \left(1-\frac{2}{r}+\mu\right)n|\mathcal{R}|-\left(1-\frac{2}{r-1}\right)|\mathcal{R}|^2= \left(1-\frac{2}{r}+\mu\right)|T||\mathcal{R}|+\frac{2}{r(r-1)}|\mathcal{R}|^2 \]
	
	edges to $T$. 
	Let $\mathcal{R}'\subseteq \mathcal{R}$ be the set of all vertices in $\mathcal{R}$ that have $\deg(v,T)> \left(1-\frac{2}{r}+\mu\right)|T|$ as we know that $\deg(v,T)\le |T|\le n$, we conclude that \[|\mathcal{R}'|\ge \frac{\frac{2}{r(r-1)}|\mathcal{R}|^2}{n}\ge \varphi |\mathcal{R}|\]
	for $\varphi = \frac{2}{r(r-1)}\eta$.
	
    \begin{figure}
    \centering
    \includegraphics[scale=0.8]{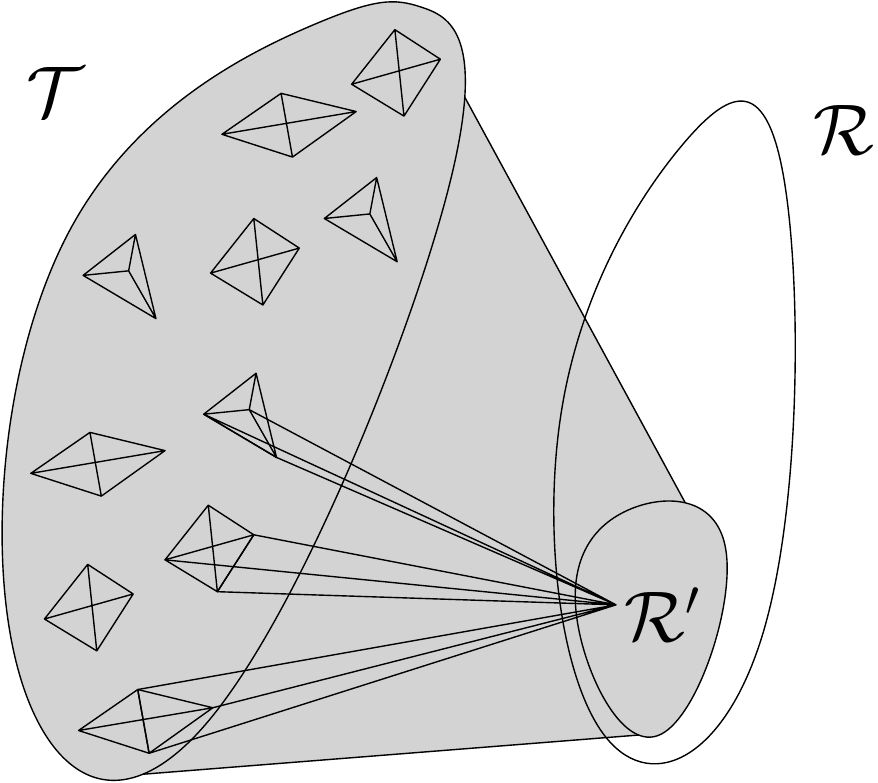} 
    \label{fig:greedyext}
    \caption{Greedy extending to $K_{r+1}$}
    \end{figure}
    
    We now use vertices or edges from $\mathcal{R}'$ to extend some $K_r$ from $\mathcal{T}$ to a $K_{r+1}$. Let $\mathcal{T'}$ be the set of all $K_{r}$ that we did not yet extend in this process and $\mathcal{R}'' \subseteq  \mathcal{R}'$ the set of unused vertices in $ \mathcal{R}'$ so far.
	The following claim asserts that the greedy process works.
	\begin{claim}
		If $\mathcal{R}'' \subseteq \mathcal{R}'$ is such that $(\mu/2r)|\mathcal{R}''|\ge \gamma n$ and for every vertex $v\in \mathcal{R}''$ we have ${T'} = V(\mathcal{T'})\subseteq T$ with $\deg(v, T') \ge \left(1-\frac{2}{r}+\frac{\mu}{2}\right)|T'|$, then we can find a $K_r$ in $\mathcal{T}'$ which can be extended to a $K_{r+1}$. 
		\label{claim:extendKr}
	\end{claim} 
	\begin{proof}
		If there is a $K_r$ in $\mathcal{T}'$ such that there is a vertex in $\mathcal{R}''$ which is connected to all vertices from this $K_r$, then we can extend it to a $K_{r+1}$. We can thus assume that every vertex in $\mathcal{R}''$ has at most $r-1$ edges to any $K_r$ in $\mathcal{T}'$. Then, the minimum degree condition implies that every vertex has at least $(\mu/2)|T'|$ copies of $K_r$ in $\mathcal{T}'$ such that $v$ is connected to exactly $r-1$ vertices of this $K_r$. 
		
		We can construct an auxiliary bipartite graph where the vertices in one partition are the copies of $K_r$ in $\mathcal{T}'$ and the other partition is formed by the vertices in $\mathcal{R}''$. Then the previous observation implies that this bipartite graph has at least $(\mu/2)|T'||\mathcal{R}''|$ edges and we can thus find a $K_r$ in $\mathcal{T'}$ such that at least $(\mu/2)|\mathcal{R}''|$ vertices from $\mathcal{R}''$ have exactly $r-1$ edges to this particular $K_r$. Call the set of these vertices $\mathcal{R}'''$ we can then further partition $\mathcal{R}'''$ into $\mathcal{R}'''_1,\ldots,\mathcal{R}'''_r$ where we put a vertex $v\in \mathcal{R}'''$ in $\mathcal{R}'''_i$ if and only if $v$ does not have an edge to the $i$th vertex in the $K_r$ (where the order of the vertices is arbitrary but fixed). Then there is some index $j$ such that $|\mathcal{R}'''_j|\ge (\mu/2r)|\mathcal{R}''|$. As we required that $\alpha(G)<\gamma n \le (\mu/2r)|\mathcal{R}''| \le |\mathcal{R}'''_j|$, there is an edge $e$ in $\mathcal{R}'''_j$. We can thus construct a $K_{r+1}$ by removing the $j$th vertex from the $K_r$ and adding the edge $e$ to the $K_r$. 
	\end{proof}

	Note that for every $K_{r+1}$ we construct we remove one $K_r$ from $\mathcal{T'}$ and at most two vertices from $\mathcal{R}''$. We choose $\rho$ maximal such that $\rho n\le (\mu / 2 r)|T|$ and $2 \rho n \le |\mathcal{R}'| - (2r / \mu)\gamma n$. 
	After the removal of at most $\rho n $ greedily formed $K_{r+1}$'s we are thus left with at least $|\mathcal{R}'|- 2 \rho n$ vertices in $\mathcal{R}''$ each of these vertices has $\deg(v,T') \ge (1-2/r+\mu/2)|T'|$. 
	Then Claim~\ref{claim:extendKr} gives that we can chose the $K_{r+1}$'s in a greedy manner until we extend $\rho n$ many $K_r$'s.
\end{proof}
Now we are ready to prove Lemma~\ref{lem:fracmat}. We restate the lemma for convenience of the reader.

\fracmat*
\begin{proof}[Proof of Lemma~\ref{lem:fracmat}]
	We start by taking a maximum $K_r$-tiling in $G$. If this covers more than $(1-\eta)n$ vertices, then we are done immediately. Else we repeatedly apply Lemma~\ref{lem:KrKr+1} while at every step blowing up each vertex of our graph $G$ with $r$ vertices. This follows the idea which emerged from \cite{treglown2016degree}. 
	After a constant number of blowups we can cover all but a $\eta^2$ fraction of the vertices with $K_r$'s. We then convert this tiling of the blown up graph into a fractional tiling of the original graph which misses at most $\eta^2 n$ of total weight, which directly implies that at most $\eta n$ vertices can have $w_{\mathcal{T}}(v)< 1-\eta $.

	In each of the steps we blow up the graph by a factor of $r$, that is we replace every vertex in the previous graph with a set of $r$ vertices and put complete bipartite graphs between all clusters that were connected by an edge in the previous graph. Note that this implies that for a $K_{r+1}$ in the previous graph we can find a perfect $K_r$ tiling in the blown up graph. 
	 We will repeat two steps:
	\begin{itemize}
		\item In the first step the \emph{enlargement step} here we start with a $K_r$ tiling which covers a $\lambda $ fraction of the vertices into a $\{K_r,K_{r+1}\}$ tiling that covers a $\lambda' >\lambda + \rho_{\ref{lem:KrKr+1}} (\eta^2, \mu, r)$ fraction
		\item The second step, the \emph{blow up step} blows up the graph and converts the given $\{K_r,K_{r+1}\}$-tiling into a $K_r$-tiling that covers a $\lambda'$ fraction.
	\end{itemize}
	Note that a $K_r$-tiling of any graph corresponding to a constant blow up by a factor of $s$ of $G$ which covers a $\lambda$ fraction of the vertices can be converted into a fractional $K_r$-tiling in $G$ with weight $\lambda n$. This can be done as follows. Let $\mathcal{T'}$ be a $K_r$-tiling in the blown up graph. We construct the fractional $K_r$-tiling $\mathcal{T}$ in $G$ in the following way. For every $K_r\in \mathcal{T'}$ by construction there is a copy of $K_r$ in $G$ which corresponds to this $K_r$ (in particular we cannot have two vertices which originate from the same vertex in $G$ as these vertices would come from an independent set). We add this $K_r$ to $\mathcal{T}$ with weight $1/s$. When there are multiple instances that correspond to the same $K_r$ in $G$ we just increase the weight by $1/s$ for each copy in the blown up graph. Let $G^s$ be the blowup of $G$ by factor of $s$, as the tiling we constructed covers $\lambda |G^s|$ vertices in $G^s$ we get that
	\[ \sum_{v\in V(G)}w_{\mathcal{T}}(v) = \sum_{K_r\in \mathcal{T'}}r \frac{1}{s}=\lambda |G^s| \frac{1}{s}=\lambda |G|.\]
	It thus suffices to show that for some number $s$, independent of $\gamma$ and $n$ we can find a $K_r$-tiling that covers $(1-\eta^2) |G^s|$ vertices in $G^s$. Let $\gamma
	=\gamma_{\ref{lem:KrKr+1}}(\eta^2,\mu,r)$ and $\rho=\rho_{\ref{lem:KrKr+1}}(\eta^2,\mu,r)$. Every time we apply Lemma~\ref{lem:KrKr+1} we newly cover a $\rho$ fraction of the vertices. We thus need to apply this lemma at most $1/\rho$ times. In each blow up step we replace one vertex from the previous graph by $r$ vertices. As we have to do at most $1/\rho$ blow up steps we know that $s\le r^{1/\rho}$. 
\end{proof}
Lemma~\ref{lem:almost} follows directly by applying Lemma~\ref{lem:fracmat} with $\mu_{\ref{lem:fracmat}}=\mu/4$ to $\Gamma$ from Lemma~\ref{lem:tilingtransferlemma} with $\mu$ and $\eta=\xi/4$. 

\section{Finishing the proof}

All that is left to do is to combine the results from the previous sections to prove the main theorem.

\begin{proof}[Proof of Theorem \ref{thm:mainresult}]

    Choose $\phi \le \mu/14r^2$ but independent from all other variables. Let $\xi=\xi_{\ref{lemma:absorbing}}$ where we apply Lemma \ref{lemma:absorbing} with $\phi$, $h=r$ and $t=6r+1$. Choose $\gamma$ small enough such that it satisfies Lemma \ref{lem:absorber_diamondpaths} as well as Lemma~\ref{lem:almost} dependent on the parameters $\mu$, $\phi$ and $\xi$. 
    
    In order to apply Lemma~\ref{lem:absorber_diamondpaths} to get a $\xi$-absorbing set, we show that for every choice of a $r$-vertex subset $S$ of $V(G)$ we can find $\phi n$ vertex disjoint $(S,6r+1)$-absorbers. We do this as follows. Start with an arbitrary $K_r$ that does not share any vertex with $S$ using Lemma~\ref{lem:RT}. Take an arbitrary bijection $g\colon V(K_r)\to S$ of the vertices of this $K_r$ to the vertices in $S$. Then use Lemma~\ref{lem:absorber_diamondpaths} to find disjoint $K_{r+1}$-diamond paths of length at most $7$ between each pair $(v,g(v))$ for all $v\in V(K_r)$. Add arbitrary $K_r$'s in case some paths where shorter until there are exactly $6r^2 + r$ vertices in total. We can repeat this $\phi n$ times without removing more than $(6r^2 + r)\phi n < (\mu/2)n$ vertices from the graph. Having these $\phi n$ many $(S,t)$-absorbers implies by Lemma \ref{lemma:absorbing} that there is some constant $\xi$ such that there is a $\xi$-absorbing set of size at most $\phi n$. Take such a set $A$ and put it aside.

    Note that as $|A|\le \phi n$ we know that for $G'= G\setminus A$ we have $\delta(G')\ge (1-2/r+\mu')n'$ and $\alpha(G') \le \gamma' n' $ where $n'=|V(G')|$, $\mu'=\mu/2$ and $\gamma' = 2\gamma$. We then apply Lemma~\ref{lem:almost} with $\xi$ from Lemma~\ref{lemma:absorbing} to $G'$ to get a tiling that covers all but at most $\xi n'$ vertices.
    Let $V_R$ be the set of vertices that remain uncovered in Lemma~\ref{lem:almost}. By construction we have $|V_R|\le \xi n'\le \xi n$ and thus we can use the absorber $A$ to cover $A\cup V_R$.
\end{proof}
\section{Final Remarks}

In Section~\ref{sec:absorbers} we prove the existence of an absorber as in Definition~\ref{def:absorber} and in Section~\ref{sec:almost} we find an almost cover by Lemma~\ref{lem:almost}. These together imply Theorem \ref{thm:mainresult}. Note that for $r < 4$ the same properties hold in spirit, but we have a different problem with divisibility of the connected components as described in \cite{balogh2016triangle}.

We showed that additionally having a low independence number in a graph significantly improves the statement from the famous Hajnal-Szemer\'edi theorem. We can take twice the size of a clique and this shows that large independent sets are really the bottleneck of the theorem.
The methods we apply might work for embedding any kind of structure into graphs with low independence number. In particular, since sparse random graphs have this property, as an immediate corollary we get that adding a dense graph (with $\delta(G) \ge (1-2/r + \mu)n$) on top of a sparse random graph ($G_{n,p}$ with $p>C\log n / n$) in any adversarial way we can still find a $K_r$-factor with high probability.

Still, we have that the bottleneck seems to be that there is a large triangle free set of size $2n / r+1$. From this set we can take at most two vertices per clique so it is impossible to cover with $n/r$ cliques of size $r$.
A natural question would be whether this extends to the natural generalization of the independence number. That is, instead of at least an edge in any subgraph of size $\gamma n$ we even find a triangle. More formally, let $\alpha_\ell(G)$ be the size of the largest set of vertices in $G$ that does not contain an induced $K_\ell$. 
The following question is a generalization of Theorem~\ref{thm:mainresult}. 
\begin{question}\label{q:generalalpha}
Is it true that for every $\ell,r\in \mathbb{N}$ with $\ell\le r$ and $\mu>0$ there is a constant $\gamma$ and $n_0\in \NN$ such that every graph on $n\ge n_0$ vertices where $r$ divides $n$, with $\delta(G)\ge\max\{1/2+\mu,(1-\ell/r+\mu)n\}$ and $\alpha_\ell(G)<\gamma n$ has a $K_r$-factor.
\end{question}

In our results, the independence number is always dependent on the $\mu$ of the minimum degree. Even though the examples that we know of only require $\alpha(G)$ to be smaller than $n/r$. Does there exist a fixed constant $\gamma$ dependent only on $r$ such that all graphs with minimum degree $\delta(G) \ge (1-2/r+ \mu)n $ have a $K_r$-factor? Combined with Question \ref{q:generalalpha} we ask whether the following is true.

\begin{question}
\label{q:generalgamma}
Is there a $\gamma > 0$ dependent only on $r$ and $\ell$ such that for every $\mu>0$ there is an $n_0$ large enough such that every graph on $n\ge n_0$ vertices where $r$ divides $n$ with $\delta(G)\ge\max\{1/2+\mu,(1-\ell/r+\mu)n\}$ and $\alpha_\ell(G)<\gamma n$ has a $K_r$-factor.\textbf{}
\end{question}

As a first step it would be interesting to answer Question \ref{q:generalgamma} for $\ell=2$ and $r=4$.
Clearly, we cannot hope for $\gamma\ge 1/r$ but it would be interesting to see how far we can push $\gamma$ towards this bound. 

\section*{Acknowledgment}
We want to thank Rajko Nenadov for bringing the problem to our attention and helpful discussions when we started the project. We thank the anonymous referee for the useful comments and suggestions. 
\pagebreak

\bibliographystyle{abbrv}
\bibliography{factors}

\end{document}